\newtheorem{Theorem}{Theorem}[section]
\newtheorem{Lemma}[Theorem]{Lemma}
\newtheorem{Notation}[Theorem]{Notation}
\newtheorem{Corollary}[Theorem]{Corollary}
\newtheorem{Proposition}[Theorem]{Proposition}
\theoremstyle{definition}
\newtheorem{Definition}[Theorem]{Definition}
\newtheorem{Examples}[Theorem]{Examples}
\newtheorem{Questions}[Theorem]{Questions}
\newtheorem{Remark}[Theorem]{Remark}
\newtheorem{Remarks}[Theorem]{Remarks}
\numberwithin{equation}{section}
\newcommand{\T}{\mathbb{T}}
\newcommand{\Z}{\mathbb{Z}}
\newcommand{\N}{\mathbb{N}}
\newcommand{\R}{\mathbb{R}}
\newcommand{\C}{\mathbf{c}}
\renewcommand{\theta}{\vartheta}
\newcommand{\eps}{\varepsilon}
\renewcommand{\phi}{\varphi}
\newcommand{\dis}{\displaystyle}
\newcommand{\cal}{\mathcal}
\newcommand{\wt}[1]{\widetilde{#1}}
\newcommand{\ol}[1]{\overline{#1}}
\def\SS_{{\mathfrak S}_{qc}}
\begin{document}                      

\title{On the Nuclearity of Dual Groups }

\author{Lydia Aussenhofer}
\address{Universit\"at Passau, Fakult\"at f\"ur Informatik und Mathematik, Innstr. 33, D-94032 Passau}
\email{lydia.aussenhofer@uni-passau.de}

\subjclass[2000]{22Axx, 46A11 }

\keywords{character group, dual space, $k_\omega$ space, nuclear group, nuclear vector space,  Pontryagin reflexive group, strongly reflexive group, 
 $s$--numbers}

\maketitle

\begin{abstract}
We prove that the dual space of a locally convex nuclear $k_\omega$
vector space  endowed with the compact--open topology is a locally
convex nuclear vector space. An analogous  result is shown for nuclear
groups. As a consequence of this, we obtain that   nuclear
$k_\omega$--groups are strongly reflexive.
\end{abstract}

\section{Introduction}
Since Grothendieck defined nuclear vector spaces in the fifties
(\cite{Grothendieck}), their properties, both in the context of the
theory of vector spaces and  their applications in  analysis  have
been studied.

In the monograph \cite{Pietschnuc} a survey is given about locally
convex nuclear vector spaces. Several years later, in the nineties,
Banaszczyk generalized the setting and introduced nuclear groups in
\cite{Buch}. He was able to show that nuclear groups share many
properties of locally convex nuclear  vector spaces and he was able
to prove a version of the Bochner and the L\'{e}vy Theorem for
nuclear groups. Even more, studying subgroups of locally convex
nuclear vector spaces (which are  typical examples for nuclear
groups) he gave surprising characterizations of nuclear Fr\'{e}chet
spaces by means of their subgroups (\cite{W+M}).

Nuclear groups have also good properties with respect to Pontryagin
duality. Before recalling them, we remember the fundamental
settings.

For an abelian topological group $G$, the set of continuous {\bf
characters} (i.e. homomorphisms in the compact circle group
$\T:=\{z\in\C|\ |z|=1\}$) forms a group with multiplication defined
pointwise. This group is denoted by $G^\wedge$ and called {\bf
character group} or {\bf dual group} of $G$. If $G^\wedge$ is
endowed with the compact--open topology, it is an abelian Hausdorff
group, in particular a topological group. This permits to form the
second dual group $G^{\wedge\wedge}:=(G^\wedge)^\wedge$. A
(necessarily abelian Hausdorff) topological group $G$ is called {\bf
Pontryagin reflexive} if the canonical mapping
$$\alpha_G:G\rightarrow G^{\wedge\wedge},\ x\mapsto (\chi\mapsto
\chi(x))$$ is a topological isomorphism.

These definitions imitate  the dual space of a topological vector
space and their duality theory. Fortunately, the two concept
coincide, which means:

\begin{Theorem}\label{Smith}
If $E$ is a topological vector space, the
mapping
$$E'_{co}\rightarrow E^\wedge,\ \phi\mapsto e^{2\pi i\phi}$$ from the
topological dual $ E'$   endowed with the compact--open topology
into $E^\wedge$ is a topological group isomorphism.
\end{Theorem}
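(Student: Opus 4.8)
The plan is to establish a bijection between the topological dual $E'$ and the character group $E^\wedge$ via the exponential map, and then show this bijection is both a group isomorphism and a homeomorphism for the compact-open topologies. I would begin by recalling that every continuous character of the additive group of $\mathbb{R}$ has the form $t\mapsto e^{2\pi i \lambda t}$ for a unique $\lambda\in\mathbb{R}$; this is the classical description of $\widehat{\mathbb{R}}\cong\mathbb{R}$. The map $\phi\mapsto e^{2\pi i\phi}$ then sends a continuous linear functional to a continuous character, and it is clearly a homomorphism since $e^{2\pi i(\phi+\psi)}=e^{2\pi i\phi}\cdot e^{2\pi i\psi}$.

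The key algebraic step is injectivity and surjectivity. Injectivity follows because if $e^{2\pi i\phi}\equiv 1$ then $\phi(E)\subseteq\mathbb{Z}$; since $\phi(E)$ is a subgroup of $\mathbb{R}$ that is also a linear subspace (being the image of a vector space under a linear map, it is divisible), it must be either $\{0\}$ or dense, so being contained in $\mathbb{Z}$ forces $\phi=0$. For surjectivity, given a character $\chi\in E^\wedge$, I would define $\phi$ on each line $\mathbb{R}x$ by using the classical lift of $\chi|_{\mathbb{R}x}$ through the covering $\mathbb{R}\to\mathbb{T}$, and check that these patch together into a well-defined $\mathbb{R}$-linear functional $\phi$ with $\chi=e^{2\pi i\phi}$; continuity of $\phi$ follows from continuity of $\chi$ near the origin, where the exponential is a local homeomorphism onto a neighborhood of $1\in\mathbb{T}$.

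The part requiring the most care is the topological statement, namely that $\phi\mapsto e^{2\pi i\phi}$ and its inverse are both continuous when $E'$ and $E^\wedge$ carry the compact-open topology. The plan is to compare the standard subbases: on $E'$ the basic neighborhoods of $0$ have the form $\{\phi : \phi(K)\subseteq (-\eps,\eps)\}$ for $K\subseteq E$ compact, while on $E^\wedge$ they have the form $\{\chi : \chi(K)\subseteq V\}$ for $V$ a neighborhood of $1$ in $\mathbb{T}$. Since $\exp\colon(-\tfrac14,\tfrac14)\to\mathbb{T}$ is a homeomorphism onto an arc, and since small compact-open neighborhoods in $E'$ and $E^\wedge$ are determined by behavior on a fixed compact set $K$ landing in a small neighborhood of $0$ (respectively of $1$), I would match a neighborhood $\{\phi:\phi(K)\subseteq(-\eps,\eps)\}$ with $\{\chi:\chi(K)\subseteq\exp((-\eps,\eps))\}$ for sufficiently small $\eps$, translating the quantifiers in both directions. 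The anticipated main obstacle is handling the wrap-around: a character can map $K$ into a small arc about $1$ without the corresponding functional mapping $K$ into a small interval about $0$, unless one first shrinks to a neighborhood where no wrapping occurs. Controlling this (for instance by exploiting that $K$ may be replaced by a balanced compact set, or by passing to the compact convex-balanced hull so that $\phi(K)$ is an interval symmetric about $0$) is the crux of the continuity argument, after which the homeomorphism conclusion follows routinely.
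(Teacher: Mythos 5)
The paper itself gives no proof of this statement; it is quoted from Smith's 1952 paper, so your argument is necessarily an independent route, and it is essentially the classical one. Your outline is sound: the homomorphism property is immediate; injectivity follows since $\phi(E)$ is a linear subspace of $\R$ and so cannot lie in $\Z$ unless it is $\{0\}$; surjectivity follows by lifting $\chi$ along each line $\R x$ via $\widehat{\R}\cong\R$, with additivity and homogeneity of the resulting $\phi$ coming from uniqueness of the lifting parameter; and the topological statement reduces to matching the subbasic neighborhoods $\{\phi:\phi(K)\subseteq(-\eps,\eps)\}$ and $\{\chi:\chi(K)\subseteq\{e^{2\pi is}:|s|<\eps\}\}$ at the identity of the two topological groups. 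You also correctly isolate the crux, the wrap-around phenomenon, and the correct repair: replace $K$ by its \emph{balanced} hull, which is compact because it is the continuous image of $[-1,1]\times K$ under scalar multiplication. For balanced $K$ and $\eps<1/2$, if $e^{2\pi i\phi}$ maps $K$ into the arc $\{e^{2\pi is}:|s|<\eps\}$, then for $x\in K$ the whole path $t\mapsto e^{2\pi it\phi(x)}$, $t\in[0,1]$, lies in that arc, and choosing $t_0=\eps/|\phi(x)|$ would produce a point of the path outside the arc if $|\phi(x)|\ge\eps$; hence $\phi(K)\subseteq(-\eps,\eps)$.

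Two caveats, neither fatal. First, in the surjectivity step your claim that continuity of $\phi$ follows merely because $\exp$ is a local homeomorphism is not enough on its own: knowing $e^{2\pi i\phi(x)}$ is close to $1$ only places $\phi(x)$ close to $\Z$, not close to $0$. You need the same no-wrap argument applied to a \emph{balanced} $0$--neighborhood $U$ with $\chi(U)$ in a small arc (such $U$ exist in every topological vector space); since you supply exactly this mechanism in your final paragraph, the ingredient is present, but it belongs in the surjectivity step too. Second, discard the alternative of passing to the ``compact convex-balanced hull'': the theorem is stated for arbitrary topological vector spaces, and in a non--locally convex space the convex hull of a compact set need not be relatively compact. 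Fortunately the convexity is never needed --- the argument only uses that $tx\in K$ for $x\in K$ and $t\in[0,1]$, not that $\phi(K)$ be an interval. With these repairs your sketch is a complete, self-contained proof, which gives the reader more than the paper's bare citation does.
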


A proof can be found in \cite{Smith}.

In the realm of topological vector spaces is an intensively studied object
(recall that the strong topology on
$E'$, a neighborhood basis of $0$ is given by the polars of bounded
subsets of $E$). In
\cite{Pietschnuc} those nuclear space are characterized the strong dual of which is
again a locally convex nuclear vector space.

Since we are interested in Pontryagin duality, the natural question
which arises is to find  sufficient conditions on  a nuclear space
$E$ (resp. a nuclear group $G$) such that $E´_{co}$  (resp.
$G^\wedge$) is a nuclear vector space.

One result in this direction in the realm of nuclear groups was
achieved in \cite{Buch} (16.1) and generalized in \cite{Diss}
(20.35) and (20.36):

\begin{Theorem}\label{20.36}
If $G$ is a metrizable nuclear group then $G^\wedge$ is a nuclear
group.
\end{Theorem}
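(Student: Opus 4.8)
The plan is to verify Banaszczyk's defining inequality for nuclear groups directly for $G^\wedge$, exploiting metrizability of $G$ to reduce everything to a countable cofinal family of data and invoking a duality principle for the relevant $s$--numbers. First I would fix a decreasing basis $U_1\supseteq U_2\supseteq\cdots$ of \nbd{}s of $0$ in $G$ (available since $G$ is metrizable) and recall that the compact--open topology on $G^\wedge$ has as a basis of \nbd{}s of the neutral element the polars $K^{\tr}$ of the compact subsets $K\subseteq G$. Thus, to prove that $G^\wedge$ is nuclear, it suffices to show: for every compact $K\subseteq G$ there is a compact $L\supseteq K$ (so that $L^{\tr}\subseteq K^{\tr}$) such that the Kolmogorov-type diameters $d_n(L^{\tr},K^{\tr})$ decrease rapidly enough to meet the nuclearity bound. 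Since nuclear groups are locally quasi-convex (Banaszczyk), all sets in sight may be taken quasi-convex and the bipolar theorem applies, so replacing a set by its bipolar costs nothing.

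The heart of the argument is a \emph{duality estimate} for the diameters $d_n$. Because the polar is order-reversing, the pair $(L^{\tr},K^{\tr})$ with $K\subseteq L$ ought to be governed by the pair $(K,L)$, and I would establish an inequality of the form $d_n(L^{\tr},K^{\tr})\le c\, d_n(K,L)$, valid up to a universal constant $c$ and a harmless shift of the index. This is the group analogue of the classical symmetry of $s$--numbers for nuclear vector spaces. I would prove it by the $s$--number machinery signalled in the keywords: realise the relevant local comparison as a continuous homomorphism factoring through a finite-dimensional torus $\T^m$, observe that passing to the adjoint homomorphism into $\T$ interchanges the two families of $s$--numbers, and read off the bound. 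Granting this estimate, the problem reduces to producing, for each compact $K$, a larger compact $L$ for which $(d_n(K,L))_n$ is rapidly decreasing.

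That reduced statement is a \emph{compact-set} analogue of nuclearity, and it is where the hypotheses on $G$ enter. Neighborhood-based nuclearity of $G$ supplies, for each $U_k$, a \nbd{} $V_k\subseteq U_k$ with $(d_n(V_k,U_k))_n$ rapidly decreasing. Using that $K$ is compact in a metrizable group, hence controlled by the countable basis $(U_k)$, I would assemble $L$ from the data attached to the $U_k$, transporting the rapid decrease of the \nbd{} diameters to the diameters $d_n(K,L)$ by the monotonicity and composition properties of the $d_n$ and by the Ascoli-type description of the equicontinuous subsets of $G^\wedge$ as the polars $U^{\tr}$ of \nbd{}s. Countability of $(U_k)$ ensures that a single cofinal sequence of choices suffices, so no uncountable selection is required; the whole construction is then fed back through the duality estimate of the preceding step.

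The step I expect to be the main obstacle is precisely the duality estimate. In the vector-space world it is the textbook identity between Kolmogorov and Gelfand widths under transposition, but for groups there is no scalar multiplication, the local models are tori $\T^m$ rather than normed spaces, and the ``operators'' are continuous homomorphisms whose adjoints live in character groups. Making the factorisation through $\T^m$ precise, controlling the index bookkeeping, and keeping the constants genuinely universal form the delicate technical core. Once this estimate and the compact-set reformulation are secured, metrizability and local quasi-convexity turn the remaining passages into routine monotonicity and bipolar arguments.
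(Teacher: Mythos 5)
First, be aware that this paper contains \emph{no proof} of Theorem \ref{20.36}: it is imported from (16.1) in \cite{Buch} and (20.35)/(20.36) in \cite{Diss}, so your attempt can only be measured against that literature and against the parallel apparatus the paper builds for its own main result \ref{MainTheorem}. Your skeleton is the correct one: the sets $K^{\tr}$ ($K\subseteq G$ compact) form a neighborhood basis of $G^\wedge$, one wants a polar estimate converting smallness of $(d_k(K,L))$ into smallness of $(d_k(L^{\tr},K^{\tr}))$, and one then needs a compact-to-compact smallness property of $G$ itself. The polar estimate you postulate does exist, but not in the form you state: there is no term-by-term inequality $d_n(L^{\tr},K^{\tr})\le c\,d_n(K,L)$ with a universal constant and ``harmless index shift''. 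What is true in the group setting is Lemma \ref{Abschaetzung} ((16.4) in \cite{Buch}): a polynomial bound $(d_k(A,B))\le(ck^{-n})$ with $n\ge 5$ yields only $(d_k(B^{\tr},A^{\tr}))\le(cc_nk^{-n+5})$, with a loss of five orders and an $n$-dependent constant; moreover its proof is not your ``adjoint of a homomorphism through $\T^m$'' analogy, but the pre--Hilbert seminorm machinery of Lemmas \ref{2.14}, \ref{2.15}, \ref{8.1} and \ref{3.20} --- the same apparatus as in Lemma \ref{general}. The weaker form still suffices for your purpose (the input decay is at your disposal), so this particular gap could at least be closed by citation rather than by the proof you sketch.

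The genuine gap is your third step, which you dismiss as routine assembly: producing, for each compact $K\subseteq G$, a compact $L$ with $(d_n(K,L))$ rapidly decreasing. This is the actual core of the theorem, and it cannot be obtained from neighborhood nuclearity by ``monotonicity and composition properties'' of the $d_n$: nuclearity of $G$ (even in the strengthened form of Proposition \ref{nuctb}) only ever makes compact sets small relative to \emph{neighborhoods}, and no chain of estimates of the type in Lemma \ref{properties}(i) can terminate in a compact right-hand entry unless one already has an estimate against some compact set --- exactly what is to be proved. Indeed, via the polar estimate this compact-to-compact property is essentially equivalent to nuclearity of $G^\wedge$, i.e.\ to the theorem itself, so your reduction is circular in difficulty. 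Closing it requires ideas absent from your outline: lifting $K$ through the representation $\phi_0:H_0\to G$ of Section 5 to a null sequence in a nuclear vector group, rescaling there and pushing back down modulo the lattice directions (as in Lemma \ref{Lemma}), and, before any of this, a reduction to complete $G$: in a non-complete metrizable nuclear group such as $\Q^{(\N)}$ dense in $\R^{\N}$, the natural candidates for $L$ --- closures of the totally bounded images $\phi(Y\cap H)$ --- need not be compact in $G$, since their limit points escape $G$; one must first invoke the fact that $G^\wedge$ and $(\wt{G})^\wedge$ coincide topologically for metrizable $G$ (\cite{MJ}, \cite{Diss}). As it stands, the proposal replaces the theorem by an unproved claim that is at least as strong, and that claim is precisely where all the work in Section 16 of \cite{Buch} lies.
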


From this it is easy to obtain

\begin{Corollary}
If $E$ is a metrizable nuclear locally convex vector space then
$E'_{co}$ is a locally convex nuclear space.
\end{Corollary}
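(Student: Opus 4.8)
The plan is to reduce the statement to the group--theoretic Theorem \ref{20.36} by identifying $E'_{co}$ with $E^\wedge$ via Theorem \ref{Smith}, and then to translate the conclusion back from the category of nuclear groups into that of nuclear vector spaces.

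First I would recall Banaszczyk's foundational result that every locally convex nuclear vector space is, when regarded merely as a topological abelian group, a nuclear group (indeed, locally convex nuclear spaces are the prototypical examples of nuclear groups), and that this passage does not alter the underlying topology and hence preserves metrizability. Thus the metrizable nuclear space $E$, viewed as a topological group, is a metrizable nuclear group, and Theorem \ref{20.36} applies directly to yield that the dual group $E^\wedge$ is a nuclear group.

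Next I would invoke Theorem \ref{Smith}: the assignment $\phi\mapsto e^{2\pi i\phi}$ is a topological group isomorphism $E'_{co}\to E^\wedge$. Since nuclearity of a group is an invariant of the topological group structure, it transfers across this isomorphism, so $E'_{co}$ is a nuclear group. At the same time $E'_{co}$ carries its natural vector space structure, and the compact--open topology is locally convex, a basis of $0$--neighborhoods being furnished by the polars of compact subsets of $E$, which are convex. Hence $E'_{co}$ is a locally convex vector space that is simultaneously a nuclear group.

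The step doing the real work is the passage from \emph{nuclear group} back to \emph{nuclear vector space}, and here I would use Banaszczyk's characterization that a locally convex vector space is a nuclear group if and only if it is a nuclear locally convex space. Applying this equivalence to $E'_{co}$, which we have just shown to be both locally convex and a nuclear group, gives that $E'_{co}$ is a locally convex nuclear space, as claimed. The one point meriting care is that the two notions of nuclearity are being compared on the same object: this is legitimate because the nuclearity of $E'_{co}$ as a group depends only on its topology, namely the locally convex compact--open topology, which is exactly the topology to which Banaszczyk's equivalence is then applied.
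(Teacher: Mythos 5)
Your proposal is correct and is essentially identical to the paper's own argument: the paper likewise applies Theorem \ref{20.36} to $E$ viewed as a metrizable nuclear group, transports nuclearity across the isomorphism $E'_{co}\to E^\wedge$ of Theorem \ref{Smith}, and closes with Banaszczyk's equivalence ((8.9) in \cite{Buch}) that a topological vector space is a nuclear group if and only if it is a locally convex nuclear space. You have merely spelled out the steps that the paper leaves implicit.
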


One only has to consider  the isomorphism $E'_{co}\rightarrow
E^\wedge$  and to recall that a topological vector space is a
nuclear group if and only if it is a locally convex nuclear vector
space ((8.9) in \cite{Buch}).

The character group of a metrizable group is always a
$k_\omega$--space ((4.7) in \cite{Diss} or Theorem 1 in \cite{MJ})
which means it has a countable cobasis for the compact sets and the
topology is the final topology induced by the compact subsets.

Now it is natural to ask whether the dual group of a nuclear
$k_\omega$--group is a nuclear group again. In \ref{MainTheorem} we
will give an affirmative answer to this question. As a consequence
of this it is not difficult to deduce that a nuclear
$k_\omega$--group $G$ is {\bf strongly reflexive} which means that
all closed subgroups and all Hausdorff quotient groups of $G$ and
$G^\wedge$ are Pontryagin reflexive.

The article is organized as follows: In the second section we recall
the definition of $s$--numbers. A typical example are the
Kolmogoroff numbers which assign to two symmetric and convex subsets
$X$ and $Y$ of a real vector space $E$ a decreasing sequence
$(d_k(X,Y))$ of real numbers (or $\infty$). These sequences
measure  how big $X$ is w.r.t. $Y$. Drawing on results of
\cite{Bauhardt}, we show that $d_k(X,Y)$  can be  approximated
by $d_k(N\cap X,Y)$ for a suitable $k$--dimensional subspace $N$. This means:
If $X$ is rather big w.r.t. $Y$ then already a finite--dimensional
subset of $X$ is rather big.

Based upon  this result, we show in section 3 that a non--nuclear
real Frech\'{e}t space  has a compact convex set $K$ and a convex
$0$ neighborhood $U$ such that $d_k(K,U)$ is rather big, more
precisely $(k^3d_k(K,U))$ is unbounded.

In section 4 we define nuclear vector groups (locally convex vector
groups), which are roughly speaking locally convex nuclear vector
spaces (locally convex vector spaces) over $\R$ with the discrete
topology. We give a similar characterization as above for metrizable
locally convex vector groups  which are not nuclear vector groups.

Afterwards, in section 5, we recall both the definition and a
representation of nuclear groups. The latter allows us to find a
null--sequence $(g_n)$ and a neighborhood $U$ in a non--nuclear
metrizable and locally quasi--convex group $G$ such that $(d_k(\{\pm
g_n|\ n\ge n_0\}\cup\{0\},U))\le (ck^9)$ is impossible for every
$n_0\in \N$ if the constant $c$ is sufficiently small.

After these preparations, in section 6 we will first prove (based on
the results of section 3) that the dual space $E'_{co}$ of a locally
convex nuclear vector space $E$ which is a $k_\omega$--space, is
again nuclear. For groups, an analogous result is true, however, the
proof is more intricate and one more technical lemma is needed (in
order to manipulate the constant $c$ mentioned above).

Combining this result with well known results on nuclear groups and
$k_\omega$--groups, we obtain that a nuclear $k_\omega$--group is
strongly reflexive.

\section{A property of   Kolmogoroff numbers}

Let ${\cal L}$ denote the class of all bounded linear operators
between seminormed spaces. For seminormed spaces $E$ and $F$, the
set of bounded linear operators $E\rightarrow F$ will be denoted by
${\cal L}(E,F)$. We recall the following definition:

\begin{Definition}
A mapping $s$ assigning to every ${\cal L}\ni T:E\rightarrow F $  a
sequence of real numbers $(s_k(T))_{k\in\N}$  satisfying the
  properties (S 1) to (S 5) is called an {\bf $s$--number}.

\begin{itemize}
\item[(S1)] $\|T\|=s_1(T)\ge s_2(T)\ge\ldots \ge 0$,
\item[(S2)] $s_k(S+T)\le s_k(S) + \|T\|$ for all $S,T\in {\cal
L}(E,F)$ and for all $k\in\N$,
\item[(S3)] $s_k(R\circ S\circ T)\le \|R\|s_k(S)\|T\|$ for all
$T\in{\cal L}(E_0,E),\ S\in{\cal L}(E,F),\ R\in{\cal L}(F,F_0)$ and
$n\in\N$,
\item[(S4)] If ${\rm dim}\,T(E)<k$ then $s_k(T)=0$,
\item[(S5)] $s_k(I_k)=1$ for all $k\in\N$ where $I_k:\R^k\rightarrow
\R^k$ is the identity and $\R^k$ is endowed with the euclidean norm.
\end{itemize}
\end{Definition}

\begin{Remark}
Originally, $s$--numbers have been introduced in realm of bounded
linear operators between Banach spaces (\cite{Pietsch}).
\end{Remark}

Before presenting some examples, we introduce the following

\begin{Notation}
For a subspace $F$ of a seminormed space $E$,
 $$ Q^E_F:E\rightarrow E/F$$
denotes the canonical projection and $$ U_E=\{x\in E|\ \|x\|\le
1\}$$ the unit ball of $E$.
\end{Notation}

\begin{Examples}
Let $T:E\rightarrow F$ be a bounded linear operator between
seminormed   spaces.

\begin{itemize}
\item[(i)] The {\bf Kolmogoroff numbers} $(d_k)$,  defined by
  $$
  \begin{array}{lrl}
d_k(T)&:=&\inf \{\| Q^F_L\circ T\|\;|\ {\rm dim}L<k\} \\ [1ex]
       &= &\inf\{\inf\{c>0|\ T(U_E)\subseteq c U_F + L\}|\ {\rm dim }\,L<k\} \\[1ex]
       &= &\inf\{c>0|\  \exists L\le E,\ {\rm dim}L<k, U_E\subseteq cU_F+L \},
  \end{array}$$
are $s$--numbers.  {\rm [Cf. 11.6 in \cite{Pietsch}.]}

  More generally, for symmetric and convex subsets $X,Y$ in a vector
  space $E$, we put
  $$d_k(X,Y):=\inf\{c>0|\ \exists L\le E,\ {\rm dim}L<k,\ X\subseteq
  cY+L\}\in[0,\infty]$$
  Then $d_k(T)=d_k(T(U_E),U_F)$.
\item[(ii)] The {\bf Hilbert numbers} $(h_k)$, defined by $$
\begin{array}{ll }
h_k(T):= &\sup\{d_k(S_2\circ T\circ S_1)|\  \ S_1\in{\cal
L}(H_1,E),\ S_2\in{\cal L}(F,H_2),\\ [1ex]
 &\ \|S_1\|\le 1,\ \|S_2\|\le 1, \
H_1,H_2\mbox{   Hilbert spaces}\},\end{array}$$ are $s$--numbers.
{\rm [Cf. 11.4 in \cite{Pietsch}. The definition given in (11.4.1)
in  \cite{Pietsch} of Hilbert numbers is different. However, the
definitions coincide according to (11.3.4) in \cite{Pietsch}.]}

\end{itemize}
\end{Examples}

The following Remark will clarify the connection between the two
different settings of Kolmogoroff diameters.
\begin{Remark}\label{Remark}
Let $X_1$ and $X_2$ be symmetric and convex subsets of a vector
space $E$ with $d_1(X_1,X_2)<\infty$. Let $p_i$ denote the Minkowski
functional of $X_i$ defined on $\langle X_i\rangle$. Then
$$d_k(X_1,X_2)=d_k(T)\quad\mbox{where}\quad T:(\langle X_1\rangle,p_1)\rightarrow
(\langle X_2\rangle,p_2),\ x\mapsto x.$$

{\rm Indeed, for every $0<\eps<1$ we have $(1-\eps)B_{p_i}\subseteq
X_i\subseteq B_{p_i}$ and hence
$$(1-\eps)d_k(B_{p_1},B_{p_2})=d_k((1-\eps)B_{p_1},B_{p_2})\le$$ $$\le
d_k(X_1,X_2)\le d_k(B_{p_1},(1-\eps)B_{p_2})\le \frac{1}{1-\eps}
d_k(B_{p_1}, B_{p_2}).$$}
\end{Remark}

We gather some simple properties of the Kolmogoroff numbers, which
will be used frequently in the sequel.

\begin{Lemma}\label{properties}
\begin{itemize}
\item[(i)] $d_{k+l-1}(X,Z)\le d_k(X,Y)\cdot d_l(Y,Z) $  for all
$k,l\in\N$.
\item[(ii)] If $N$ is a subspace contained in $X$ and if
$d_1(X,Y)<\infty$, then $d_k(X,Y)=d_k(Q^E_N (X),Q^E_N(Y))$.
\item[(iii)] If $N$ is a subspace contained in $Y$, then
$d_k(X,Y)=d_k(X+N,Y)$.
\item[(iv)] For $\lambda,\mu>0$ we have
$d_k(\lambda X,\mu Y)=\frac{\lambda}{\mu}d_k(X,Y)$.
\item[(v)] $d_k(T^{-1}(T(X)),T^{-1}(T(Y)))=d_k(T(X),T(Y))$ for a
linear operator $T:E\rightarrow F$ and symmetric and convex subsets
$X,Y$ of $E$.
\end{itemize}
\end{Lemma}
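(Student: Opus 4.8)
The plan is to prove each of the five properties directly from the definition
$$d_k(X,Y)=\inf\{c>0\mid \exists L\le E,\ \dim L<k,\ X\subseteq cY+L\},$$
manipulating the witnessing subspaces and scalars. For \emph{(i)}, I would take $c$ witnessing $d_k(X,Y)$ via a subspace $L$ with $\dim L<k$, so $X\subseteq cY+L$, and $c'$ witnessing $d_l(Y,Z)$ via $M$ with $\dim M<l$, so $Y\subseteq c'Z+M$. Substituting gives $X\subseteq c(c'Z+M)+L=cc'Z+(cM+L)$, and since $\dim(cM+L)\le \dim M+\dim L<(k-1)+(l-1)+1=k+l-1$, the subspace $cM+L$ witnesses $d_{k+l-1}(X,Z)\le cc'$; taking infima over $c,c'$ yields the claim. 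For \emph{(iv)}, I would observe that $\lambda X\subseteq c\mu Y+L$ is equivalent to $X\subseteq \tfrac{c\mu}{\lambda}Y+\tfrac1\lambda L$, and $\dim(\tfrac1\lambda L)=\dim L$, so the admissible constants for the pair $(\lambda X,\mu Y)$ are exactly $\tfrac{\lambda}{\mu}$ times those for $(X,Y)$.

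For \emph{(iii)}, I would argue both inequalities. Since $X\subseteq X+N$, monotonicity gives $d_k(X,Y)\le d_k(X+N,Y)$ directly; note monotonicity in the first argument is immediate from the definition and should perhaps be recorded first. Conversely, if $X\subseteq cY+L$ with $\dim L<k$, then using $N\subseteq Y$ I would write $X+N\subseteq cY+L+N$. The subtlety is that $L+N$ need not have dimension $<k$, so I cannot simply absorb $N$ into the witnessing subspace. Instead I exploit $N\subseteq Y$: since $Y$ is convex and symmetric and $N$ is a subspace, $cY+N\subseteq cY+cY = 2cY$ is too lossy; the correct move is $N\subseteq Y\subseteq cY$ only if $c\ge 1$, so more carefully one writes $X+N\subseteq (cY+L)+N$ and then, because $N\subseteq Y$ implies $\lambda N\subseteq Y$ for all $\lambda$ (as $Y$ absorbs the subspace $N$, indeed $\R N\subseteq Y$ forces $N=\{0\}$ unless $Y$ is unbounded along $N$), the cleanest route is to pass to the quotient by $N$ in part \emph{(ii)}. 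I expect this absorption issue to be the main obstacle, and I would handle it by the same quotient technique used in \emph{(ii)}.

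For \emph{(ii)}, the hypothesis $N\subseteq X$ together with $d_1(X,Y)<\infty$ (so $X\subseteq c_0 Y$ for some $c_0$, forcing $N\subseteq Y$ as well after scaling, hence $N$ is absorbed by both sets) lets me work with the canonical projection $Q=Q^E_N:E\to E/N$. I would show that $X\subseteq cY+L$ with $\dim L<k$ implies $Q(X)\subseteq cQ(Y)+Q(L)$ with $\dim Q(L)\le \dim L<k$, giving $d_k(Q(X),Q(Y))\le d_k(X,Y)$. For the reverse, given $Q(X)\subseteq c\,Q(Y)+\bar L$ with $\dim\bar L<k$, I would lift $\bar L$ to a subspace $L\le E$ of the same dimension with $Q(L)=\bar L$, and check that $Q(X)\subseteq cQ(Y)+Q(L)$ pulls back to $X\subseteq cY+L+N$; since $N\subseteq X$ is absorbed by $Y$ (using $N\subseteq Y$) and $N\subseteq L+N$, one concludes $X\subseteq cY+(L+N)$ and then removes $N$ precisely because $N\subseteq Y$, recovering $X\subseteq cY+L$ up to adjusting constants by $\eps$ via Remark \ref{Remark}. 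This is where the finiteness hypothesis $d_1(X,Y)<\infty$ is essential, since it guarantees the Minkowski-functional comparison is available.

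Finally, for \emph{(v)} I would note that $X\subseteq T^{-1}(T(X))$ and that for any subspace $L\le E$ one has the pullback relation: $T(X)\subseteq cT(Y)+T(L')$ for a subspace $L'\le E$ of dimension $<k$ is equivalent, after applying $T^{-1}$ and using $T^{-1}(cT(Y))=cT^{-1}(T(Y))$ together with $T^{-1}(A+B)=T^{-1}(A)+\ker T$ type identities, to $T^{-1}(T(X))\subseteq cT^{-1}(T(Y))+(L'+\ker T)$. The key point is that replacing $L'$ by $L'+\ker T$ does not change the relevant dimension count \emph{in the quotient}, so by the same reduction as \emph{(ii)} — working modulo $\ker T$ where $T$ becomes injective — the two Kolmogoroff numbers coincide. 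Here I would again lean on part \emph{(ii)} applied to the subspace $\ker T\subseteq T^{-1}(T(X))$, making \emph{(ii)} the workhorse from which \emph{(iii)} and \emph{(v)} follow with little extra effort.
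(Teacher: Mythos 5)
The paper states this lemma without proof (it is offered as a list of ``simple properties''), so there is nothing of the author's to compare against; I judge your proposal on its own merits. Parts \emph{(i)} and \emph{(iv)} are correct and complete as written. The genuine gap is the absorption step underlying both \emph{(ii)} and \emph{(iii)}, which you correctly flag as ``the main obstacle'' but never resolve. The fact you need is: \emph{if $N$ is a subspace contained in the convex set $Y$, then $Y+N\subseteq(1+\eps)Y$ for every $\eps>0$}, whence $cY+N=c(Y+N)\subseteq(1+\eps)cY$ for every $c>0$. This is one line: for $y\in Y$ and $n\in N$, $\frac{1}{1+\eps}(y+n)=\frac{1}{1+\eps}\,y+\frac{\eps}{1+\eps}\cdot\frac{n}{\eps}$ is a convex combination of the elements $y$ and $n/\eps$ of $Y$ (here $n/\eps\in N\subseteq Y$ because $N$ is a subspace). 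Your escape route for \emph{(iii)} --- defer to the quotient technique of \emph{(ii)} --- does not avoid this point: in the reverse direction of \emph{(ii)} the pullback is $Q^{-1}\bigl(cQ(Y)+\bar L\bigr)=cY+N+L$, so exactly the same ``$+N$'' reappears and must be absorbed into $cY$; your write-up of \emph{(ii)} handles it only with the phrase ``removes $N$ precisely because $N\subseteq Y$'' plus an appeal to Remark~\ref{Remark}, which is not the relevant tool. So \emph{(iii)} defers to \emph{(ii)}, and \emph{(ii)} defers to an unproved claim --- a circle. Your side remarks in \emph{(iii)} are also off: $N\subseteq cY$ holds for \emph{every} $c>0$ (since $N=cN\subseteq cY$), not only for $c\ge 1$; and a symmetric convex set may perfectly well contain a nontrivial subspace (it is then merely unbounded in those directions), so the parenthetical ``forces $N=\{0\}$'' is a red herring. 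With the absorption lemma in hand, \emph{(iii)} needs no quotient at all: $X\subseteq cY+L$ gives $X+N\subseteq cY+N+L\subseteq(1+\eps)cY+L$, so $d_k(X+N,Y)\le(1+\eps)d_k(X,Y)$ for all $\eps>0$; the reverse inequality is monotonicity, as you said. The same lemma makes your pullback in \emph{(ii)} rigorous (and your derivation of $N\subseteq Y$ there from $N\subseteq X\subseteq c_0Y$ is fine, since $N=\frac{1}{c_0}N\subseteq Y$).

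For \emph{(v)}, leaning on \emph{(ii)} is also not quite legitimate, because \emph{(ii)} carries the hypothesis $d_1(X,Y)<\infty$ while \emph{(v)} assumes nothing; your reduction silently needs $d_1\bigl(T^{-1}(T(X)),T^{-1}(T(Y))\bigr)<\infty$. In fact \emph{(v)} is cleaner done directly, with no $\eps$ at all, because the subspace $\ker T$ sits inside \emph{both} sets: write $T^{-1}(T(X))=X+\ker T$ and $T^{-1}(T(Y))=Y+\ker T$. One direction is immediate by applying $T$ to an inclusion $X+\ker T\subseteq c(Y+\ker T)+L$, since $\dim T(L)\le\dim L<k$. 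For the other, given $T(X)\subseteq cT(Y)+M$ with $\dim M<k$, note that every element of $M$ actually used lies in $T(E)$, so one may replace $M$ by $M\cap T(E)=T(L)$ for some $L\le E$ with $\dim L\le\dim M$; then $T^{-1}\bigl(cT(Y)+T(L)\bigr)=cY+\ker T+L=c\bigl(Y+\ker T\bigr)+L$ exactly, because $c\ker T=\ker T$. This yields equality of the two Kolmogoroff numbers with no appeal to \emph{(ii)} and no finiteness assumption.
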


\begin{Definition}
We define $$l_k(T):=\sup\{d_k(T|_{N})|\ N\le E,\ {\rm dim}N\le
k\}.$$
\end{Definition}
Then $(l_k)$ has  the properties (S2) to (S5) but it may fail to be
monotone.
 Nevertheless, the following estimate holds:

\begin{Theorem}\label{estimate}
$$l_k(T)\le d_k(T)\quad \mbox{and}\quad h_k(T)\le d_k(T)\le k^2 l_k(T).$$
\end{Theorem}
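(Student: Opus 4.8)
The plan is to dispose of the two easy inequalities straight from the axioms and to reduce the hard one to a single, clean finite--dimensional statement. For $l_k(T)\le d_k(T)$: if $N\le E$ with $\dim N\le k$, then $U_N=U_E\cap N$ satisfies $T(U_N)\subseteq T(U_E)$, so by monotonicity of the Kolmogoroff numbers in the first argument (inherent in the definition, cf. \ref{properties}) we get $d_k(T|_N)=d_k(T(U_N),U_F)\le d_k(T(U_E),U_F)=d_k(T)$; taking the supremum over $N$ gives the claim. For $h_k(T)\le d_k(T)$: apply (S3) to the factorisation $S_2\circ T\circ S_1$ from the definition of $h_k$. Since $\|S_1\|,\|S_2\|\le1$, we have $d_k(S_2\circ T\circ S_1)\le\|S_2\|\,d_k(T)\,\|S_1\|\le d_k(T)$, and the supremum over admissible $S_1,S_2$ gives $h_k(T)\le d_k(T)$.

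For $d_k(T)\le k^2 l_k(T)$ I may assume $\delta:=d_k(T)>0$ (it is finite, as $\delta\le\|T\|$). It suffices to exhibit \emph{one} subspace $N\le E$ with $\dim N\le k$ and $d_k(T|_N)\ge\delta/k^2$, since then $l_k(T)\ge d_k(T|_N)\ge\delta/k^2$. The whole difficulty is to find $k$ vectors $x_1,\dots,x_k\in U_E$ whose images $y_i:=Tx_i$ are simultaneously \emph{large} (of size $\gtrsim\delta$) and \emph{well spread} in $F$. I would first reduce $d_k(T|_N)$ to a purely Euclidean quantity in three steps, each costing a factor $k^{-1/2}$. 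Put $N=\langle x_1,\dots,x_k\rangle$ and $M=T(N)=\langle y_1,\dots,y_k\rangle$. First, the Kadec--Snobar theorem gives a projection $P\colon F\to M$ with $\|P\|\le\sqrt k$; applying $P$ to an inclusion $T(U_N)\subseteq cU_F+L$ yields $d_k(T|_N)\ge k^{-1/2}\,d_k\bigl(T(U_N),U_F\cap M\bigr)$. Second, John's theorem applied to the body $U_F\cap M$ in the $k$--dimensional space $M$ produces a Euclidean ball $B_2$ with $B_2\subseteq U_F\cap M\subseteq\sqrt k\,B_2$, so by \ref{properties}(iv), $d_k(T(U_N),U_F\cap M)\ge k^{-1/2}\,d_k(T(U_N),B_2)$. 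Third, since $\mathrm{conv}(\pm y_1,\dots,\pm y_k)\subseteq T(U_N)$ and $B_2$ is Euclidean, the hyperplane description of $d_k$ gives $d_k(T(U_N),B_2)\ge\min_{\|\varphi\|_2=1}\max_i|\langle\varphi,y_i\rangle|\ge k^{-1/2}\sigma_{\min}(Y)$, where $Y\colon\ell_2^k\to M$, $e_i\mapsto y_i$, and $\sigma_{\min}$ is its smallest singular value.

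Collecting the three losses and using $\|\cdot\|_2\ge\|\cdot\|_F$ on $M$, I obtain $d_k(T|_N)\ge k^{-3/2}\sigma_{\min}(Y)\ge k^{-3/2}\min_{\|a\|_2=1}\bigl\|\sum_i a_iy_i\bigr\|_F$. Hence everything is reduced to the following \emph{target}: choose $x_1,\dots,x_k\in U_E$ with $\min_{\|a\|_2=1}\bigl\|\sum_i a_iTx_i\bigr\|_F\ge\delta/\sqrt k$, which then forces $d_k(T|_N)\ge\delta/k^2$. The natural way to meet the target is a greedy (equivalently, maximal--volume) selection: having chosen $x_1,\dots,x_{i-1}$, pick $x_i\in U_E$ maximising $\mathrm{dist}\bigl(Tx,L_{i-1}\bigr)$ with $L_{i-1}=\langle y_1,\dots,y_{i-1}\rangle$. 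The maximal value $v_i=\|Q^F_{L_{i-1}}T\|$ then satisfies $v_1\ge v_2\ge\cdots\ge v_k$ and $v_i\ge d_i(T)\ge\delta>0$ (so the $y_i$ are independent and $\dim N=k$), and maximality forces the a priori bounds $\mathrm{dist}(y_j,L_{i-1})\le v_i$ for $j>i$; equivalently the norming functionals $f_i$ (with $\|f_i\|\le1$, $f_i|_{L_{i-1}}=0$, $f_i(y_i)=v_i$) obey $|f_i(y_j)|\le v_i$.

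\textbf{Securing the target is the main obstacle.} The danger is that the distance data alone only pin down the \emph{diagonal} $f_i(y_i)=v_i$ while merely bounding the off--diagonal terms $f_i(y_j)$ $(j>i)$ by the \emph{same} size $v_i$; a triangular array with unit diagonal and off--diagonal entries of comparable size can have an inverse of exponential size, which would make $\sigma_{\min}(Y)$ exponentially small and break the estimate. So the distance inequalities must be combined with the monotone decay $v_i\downarrow$ and, above all, with the \emph{two--sided} control that the maximality of the selection secretly provides (in the maximal--volume formulation this reads $\mathrm{conv}(\pm y_i)\subseteq T(U_E)\cap M\subseteq\{\sum_i c_iy_i:\|c\|_\infty\le1\}$). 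It is precisely this two--sided control that makes the chosen system Euclidean--well--conditioned and yields $\sigma_{\min}(Y)\ge\delta/\sqrt k$; once it is in place, the three factors $k^{-1/2}$ assembled above combine to give $d_k(T|_N)\ge\delta/k^2$ and hence $d_k(T)\le k^2 l_k(T)$.
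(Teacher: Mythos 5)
Your two easy inequalities are fine ($l_k\le d_k$ by monotonicity of $d_k$ in its first argument, $h_k\le d_k$ by (S3), exactly as in the paper), and your reduction chain is sound arithmetic: Kadec--Snobar, John, and the singular-value estimate do give $d_k(T|_N)\ge k^{-3/2}\min_{\|a\|_2=1}\bigl\|\sum_i a_iTx_i\bigr\|_F$ for $N=\langle x_1,\dots,x_k\rangle$. The genuine gap is exactly at the point you flag as ``the main obstacle'': the target $\min_{\|a\|_2=1}\bigl\|\sum_i a_iTx_i\bigr\|_F\ge\delta/\sqrt k$ is never established. Your greedy selection is, as you yourself note, insufficient (triangular control with comparable off-diagonal entries allows exponentially ill-conditioned systems), and the fallback --- maximal-volume selection with its ``two-sided control'' --- does not survive in this generality, for two reasons. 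First, $T(U_E)$ is in general a non-compact set spanning an infinite-dimensional subspace of $F$, so a maximal-volume choice of $y_1,\dots,y_k$ is not even well defined: volume of a $k$-tuple makes sense only after fixing a $k$-dimensional ambient space or a Euclidean structure, but your candidate span $M$ varies with the tuple; near-maximizers of $\det(f_i(y_j))$ over $y_j\in T(U_E)$, $f_i\in U_{F'}$ do exist, but they confine $T(U_E)$ only modulo the infinite-dimensional subspace $\bigcap_i\ker g_i$, not modulo the $(k-1)$-dimensional $L_j$. Second, even granting your inclusion $T(U_E)\cap M\subseteq\{\sum_i c_iy_i:\|c\|_\infty\le1\}$, it controls only the part of $T(U_E)$ lying inside $M$: from it one deduces $\mathrm{dist}_F(y_j,L_j)\ge d_k(T(U_E)\cap M,U_F)$, not $\ge d_k(T(U_E),U_F)=\delta$. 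Lower-bounding the thickness of a $k$-dimensional section of $T(U_E)$ by the thickness of $T(U_E)$ itself is precisely the content of the inequality $d_k\le k^2l_k$ you are proving, so at this point the argument is circular.

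What is missing is exactly the content of Bauhardt's selection lemma, which the paper imports as Lemma \ref{Bauhardt}(ii): for every $\eps>0$ there exist $x_1,\dots,x_k\in U_E$ with $\min_j\|Q^F_{L_j}T(x_j)\|\ge\frac{(1-\eps)^2}{k}d_k(T)$, where $L_j=\langle T(x_i):i\ne j\rangle$ --- a symmetric, not merely triangular, separation, obtained at the cost of a factor $1/k$; its proof is the real work here. Note, moreover, that even if you import this lemma your route proves less: the separation $\ge(1-\eps)^2\delta/k$ converts (via the largest-coordinate trick) into $\min_{\|a\|_2=1}\|\sum_i a_iy_i\|_F\gtrsim\delta/k^{3/2}$, and after your three factors of $k^{-1/2}$ you obtain only $d_k(T)\le k^3\,l_k(T)$. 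The paper avoids the Kadec--Snobar and John losses altogether: it combines Lemma \ref{Bauhardt}(ii) with Lemma \ref{Bauhardt}(i), the Hilbert-number lower bound $h_k(T|_N)\ge\frac1k\min_j\|Q^F_{L_j}T(x_j)\|$, together with $h_k\le d_k$ applied to $T|_N$ and the observation that the quotient norms may be computed inside $\langle T(x_1),\dots,T(x_k)\rangle$; the two factors $1/k$, one from each part of Bauhardt's lemma, then give exactly $d_k(T)\le k^2l_k(T)$.
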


For the proof we need the following

\begin{Lemma}[Bauhardt]
\label{Bauhardt}

Let     $T\in {\cal L}(E, F)$.
\begin{itemize}
\item[(i)] For   $x_1,\ldots,x_k\in  U_E $ and
 $L_i:=\langle T(x_j):\ j\not=i\rangle $ ($i=1,\ldots, k$) the
 following estimate holds:
$$h_k(T)\ge \frac{1}{k}\min\{\|Q^F_{L_j}T(x_j)\|\ |\ \ j=1,\ldots,k\}$$
\item[(ii)] For every $0<\eps < 1$ and every $k\in\N$ there exist
$x_1,\ldots, x_k\in U_E$ such that $$\min\{\|Q^F_{L_j}T(x_j)\|\ |\
j=1,\ldots, k\}\ge \frac{1}{k}d_k(T)(1-\eps)^2.$$
\end{itemize}
\end{Lemma}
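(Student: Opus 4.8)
The statement splits into two independent halves that together feed the two--sided estimate of Theorem \ref{estimate}: part (i) is a lower bound for the Hilbert number $h_k(T)$ attached to a \emph{prescribed} configuration $x_1,\dots,x_k$, obtained by factoring $T$ through a Euclidean space, while part (ii) is an \emph{existence} statement producing a configuration that nearly realizes the Kolmogoroff number $d_k(T)$. I would treat them separately, as the first is a clean computation and the second carries the real difficulty.

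For (i) write $y_j=T(x_j)$ and $\delta=\min_j\|Q^F_{L_j}y_j\|$. For each $j$ Hahn--Banach provides $\phi_j\in F^\ast$ with $\|\phi_j\|\le1$, $\phi_j|_{L_j}=0$ and $\phi_j(y_j)=\|Q^F_{L_j}y_j\|$. Since $y_i\in L_j$ whenever $i\ne j$, the matrix $(\phi_i(y_j))$ is diagonal, namely $\phi_i(y_j)=\delta_{ij}\phi_j(y_j)$. Now factor through the Euclidean space $\R^k$ by $S_1\colon\R^k\to E$, $S_1(e_j)=\tfrac1{\sqrt k}x_j$, and $S_2\colon F\to\R^k$, $S_2(y)=\tfrac1{\sqrt k}(\phi_1(y),\dots,\phi_k(y))$. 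From $\|x_j\|\le1$ one gets $\|S_1\|\le1$, and from $\sum_j\phi_j(y)^2\le k\|y\|^2$ one gets $\|S_2\|\le1$. The composition is the diagonal operator
$$S_2\circ T\circ S_1\colon e_j\longmapsto\tfrac1k\,\phi_j(y_j)\,e_j,$$
whose $k$--th Kolmogoroff number equals its smallest diagonal entry $\tfrac1k\min_j\phi_j(y_j)=\tfrac1k\delta$. Because $H_1=H_2=\R^k$ is Hilbert and $\|S_1\|,\|S_2\|\le1$, the definition of the Hilbert numbers gives $h_k(T)\ge d_k(S_2 T S_1)=\tfrac1k\delta$, which is exactly (i).

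For (ii) I would build the $x_j$ inductively. Setting $M_j=\langle T(x_1),\dots,T(x_{j-1})\rangle$ we have $\dim M_j<k$, so $\|Q^F_{M_j}\circ T\|\ge d_k(T)$ by the very definition of the Kolmogoroff number; hence one may choose $x_j\in U_E$ with $\|Q^F_{M_j}T(x_j)\|\ge(1-\eps)d_k(T)$, which places each $T(x_j)$ far from the span of the \emph{previous} images. The conclusion of the lemma, however, concerns the distance of $T(x_j)$ to $L_j=\langle T(x_i):i\ne j\rangle$, the span of \emph{all} the others, and this is where the factor $\tfrac1k$ (and the second $(1-\eps)$) has to appear. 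I would again introduce norm--one functionals $\phi_i$ with $\phi_i|_{M_i}=0$ and $\phi_i(T(x_i))=\|Q^F_{M_i}T(x_i)\|$, and then correct $\phi_j$ by a suitable linear combination of the later $\phi_{j+1},\dots,\phi_k$ so as to annihilate $T(x_{j+1}),\dots,T(x_k)$ as well; the corrected functional is biorthogonal to the whole system, and its norm bounds $\|Q^F_{L_j}T(x_j)\|$ from below.

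The main obstacle is precisely this last step: controlling the norm of the corrected functional, equivalently the back--substitution coefficients of the upper--triangular system $(\phi_i(T(x_j)))$. A naive bound $|\phi_i(T(x_l))|\le\|T\|$ is useless, since $\|T\|=d_1(T)$ may dwarf $d_k(T)$; what must be exploited is the constraint $x_l\in U_E$ together with the near--maximality of $\|Q^F_{M_l}T(x_l)\|$, so that the $k$ correction terms cost a loss of at most a factor $k$ rather than an uncontrolled one. Equivalently, one may avoid the plain greedy choice altogether and instead select the pair (vectors, biorthogonal functionals) by maximizing a determinant $\det(\phi_i(T(x_j)))$ in Auerbach fashion, which yields a genuinely biorthogonal system and converts the loss into a clean conditioning estimate. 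I expect this conditioning estimate to be the delicate, genuinely quantitative heart of the proof; the two factors $(1-\eps)$ are simply the slack spent in choosing near--maximizers at each inductive step and in extending the biorthogonal functionals by Hahn--Banach without enlarging their norms.
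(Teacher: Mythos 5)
The paper offers no proof of this lemma at all---it simply cites Lemmas 1 and 3 of \cite{Bauhardt}---so your attempt has to be judged on its own. Part (i) stands: the Hahn--Banach functionals $\phi_j$ vanishing on $L_j$, the factorization through $\ell_2^k$ via $S_1(e_j)=\tfrac{1}{\sqrt{k}}x_j$ and $S_2(y)=\tfrac{1}{\sqrt{k}}(\phi_1(y),\dots,\phi_k(y))$, the norm bounds $\|S_1\|,\|S_2\|\le 1$, the diagonality of $S_2\circ T\circ S_1$, and the lower bound $d_k$ of a diagonal operator $\ge$ its smallest entry (which follows from (S3) and (S5) applied to the inverse diagonal) are all correct and complete, and this is essentially Bauhardt's own argument.

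Part (ii), however, is a plan rather than a proof, and you say so yourself. The greedy construction you start with (near-maximizing $\|Q^F_{M_j}T(x_j)\|$ over the span $M_j$ of the \emph{previous} images) produces an upper-triangular matrix $(\phi_i(T(x_j)))$ whose off-diagonal entries are only bounded by $\|T\|=d_1(T)$; back-substituting to make the $\phi_j$ biorthogonal therefore loses factors of order $(\|T\|/d_k(T))^{k}$, which is fatal---you correctly identify this, but the announced repair (``the conditioning estimate'', ``maximizing a determinant in Auerbach fashion'') is never carried out, and it is precisely the mathematical content of the lemma. To close it along the line you hint at: set $V(y_1,\dots,y_k):=\sup\{|\det(\phi_i(y_j))|\;:\;\phi_i\in F',\ \|\phi_i\|\le 1\}$ and $V_k:=\sup\{V(T(x_1),\dots,T(x_k))\;:\;x_i\in U_E\}$. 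One needs two inequalities, neither of which appears in your proposal: first, $V_k\ge d_k(T)\,V_{k-1}$, obtained by appending to a near-maximizing $(k-1)$-tuple a vector $x_k$ with $\|Q^F_{M_k}T(x_k)\|\ge(1-\eps)d_k(T)$ together with a Hahn--Banach functional vanishing on $M_k$, so that the $k\times k$ determinant factors; second, $V(y_1,\dots,y_k)\le k\,\|Q^F_{L_j}y_j\|\,V(y_1,\dots,\widehat{y_j},\dots,y_k)$ for every $j$, obtained by replacing $y_j$ with $y_j-z$, $z\in L_j$ nearly realizing the distance (the determinant is unchanged) and expanding along the $j$-th column, each of the $k$ cofactors being bounded by $V_{k-1}$---this cofactor expansion is exactly where the factor $1/k$ enters. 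After checking $0<V_{k-1}\le V_k<\infty$ (Hadamard's inequality for finiteness; the first inequality iterated for positivity, the case $d_k(T)=0$ being trivial), any tuple with $V(T(x_1),\dots,T(x_k))\ge(1-\eps)V_k$ satisfies $\|Q^F_{L_j}T(x_j)\|\ge\frac{1-\eps}{k}d_k(T)$ for all $j$, which even improves the stated $(1-\eps)^2$. Without these estimates your part (ii) asserts the conclusion it was supposed to establish.
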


\begin{proof}
These are  Lemma 1 and Lemma 3 in \cite{Bauhardt}.
\end{proof}

\noindent{\it Proof} {\it of \ref{estimate}}.
The first inequality of \ref{estimate} follows from (S 3), since
$T|_N=T\circ \iota_N$ where $\iota_N:N\rightarrow E$ is the
embedding.

The inequality $h_k(T)\le d_k(T)$ is also a consequence of (S 3).

In order to prove the last inequality, we fix $0<\eps <1$ and
$k\in\N$. According to \ref{Bauhardt} (ii), there exist
$x_1,\ldots,x_k\in  U_E $ such that $$\min\{\|Q^F_{L_j}T(x_j)\|\ |\
j=1,\ldots, k\} \ge \frac{(1-\eps)^2}{k} d_k(T)$$ and \ref{Bauhardt}
(i) implies

$$h_k(T|_{\langle
x_1,\ldots,x_k\rangle})\ge \frac{1}{k} \min \{\|Q^{\langle
T(x_1),\ldots,T(x_k)\rangle}_{L_j}T(x_j)\|\ |\ \ j=1,\ldots, k\}.$$

Taking into consideration $\|Q^{\langle
T(x_1),\ldots,T(x_k)\rangle}_{L_j}T(x_j)\|=\|Q^F_{L_j}T(x_j)\|$ and
recalling that $d_k\ge h_k$,   we obtain:
$$d_k(T|_{\langle
x_1,\ldots,x_k\rangle})\ge h_k(T|_{\langle x_1,\ldots,x_k\rangle})
\ge \frac{(1-\eps)^2}{k^2}d_k(T).$$ Since $\eps$ was arbitrary, the
assertion follows. 

\begin{Corollary}\label{Cor}
Let $X$ and $Y$ be symmetric and convex subsets of a vector space
$E$ with $d_1(X,Y)<\infty$. Then $$l_k(X,Y):=\sup\{d_k(X\cap N,Y)|\
N\le E,\ {\rm dim}N\le k\}$$ satisfies $$
 l_k(X,Y)\le d_k(X,Y)\le k^2 l_k(X,Y).$$
\end{Corollary}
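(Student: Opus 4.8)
The plan is to deduce Corollary \ref{Cor} from Theorem \ref{estimate} by passing from the abstract operator formulation to the set-theoretic formulation of Kolmogoroff numbers, using Remark \ref{Remark} as the dictionary. The hypothesis $d_1(X,Y)<\infty$ is exactly what is needed to invoke Remark \ref{Remark}: it guarantees that the Minkowski functionals $p_1,p_2$ of $X,Y$ are well-behaved on $\langle X\rangle$ and that the formal identity map $T:(\langle X\rangle,p_1)\to(\langle Y\rangle,p_2)$, $x\mapsto x$, is a bounded linear operator satisfying $d_k(X,Y)=d_k(T)$ for all $k$.

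First I would observe that the inequalities of Theorem \ref{estimate}, namely $l_k(T)\le d_k(T)\le k^2 l_k(T)$, hold for this particular operator $T$. So the task reduces to identifying the quantity $l_k(T)=\sup\{d_k(T|_N)\mid N\le\langle X\rangle,\ \dim N\le k\}$ with the set-theoretic quantity $l_k(X,Y)=\sup\{d_k(X\cap N,Y)\mid N\le E,\ \dim N\le k\}$. The heart of the argument is therefore to show, for each finite-dimensional subspace $N$, that $d_k(T|_N)$ and $d_k(X\cap N,Y)$ agree closely enough that their suprema coincide.

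To make this identification I would fix a subspace $N$ with $\dim N\le k$ and unwind both definitions. On the operator side, $d_k(T|_N)=d_k\bigl(T(U_N),U_{\langle Y\rangle}\bigr)$ where $U_N=\{x\in N\mid p_1(x)\le 1\}=N\cap B_{p_1}$ is the unit ball of the restricted seminorm and $U_{\langle Y\rangle}=B_{p_2}$. Since $B_{p_1}$ approximates $X$ from inside and outside by the sandwich $(1-\eps)B_{p_i}\subseteq X_i\subseteq B_{p_i}$ recorded in Remark \ref{Remark}, intersecting with $N$ gives $(1-\eps)(N\cap B_{p_1})\subseteq N\cap X\subseteq N\cap B_{p_1}$, and likewise $Y$ is sandwiched by scalings of $B_{p_2}$. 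Applying the homogeneity and monotonicity of $d_k$ from Lemma \ref{properties}(iv) (and the elementary monotonicity in each argument) then yields $(1-\eps)\,d_k(N\cap B_{p_1},B_{p_2})\le d_k(N\cap X,Y)\le\frac{1}{1-\eps}\,d_k(N\cap B_{p_1},B_{p_2})$, exactly as in the sandwich computation already carried out in Remark \ref{Remark}. Letting $\eps\to 0$ gives $d_k(N\cap X,Y)=d_k(N\cap B_{p_1},B_{p_2})=d_k(T|_N)$.

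Taking the supremum over all $N$ with $\dim N\le k$ then gives $l_k(X,Y)=l_k(T)$, and substituting into Theorem \ref{estimate} yields the claimed chain $l_k(X,Y)\le d_k(X,Y)\le k^2 l_k(X,Y)$. The step I expect to require the most care is the seminorm bookkeeping: one must confirm that $Y=X_2$ may indeed be replaced by $\langle Y\rangle=\langle X_2\rangle$ throughout without changing $d_k$, and that intersecting the sandwich inequalities with $N$ preserves them in the form needed for the homogeneity estimate. Because the $\eps$-sandwich argument is already spelled out in Remark \ref{Remark} for the global sets, the only genuinely new ingredient is that it survives restriction to $N$, which is immediate since intersection with a fixed subspace commutes with scalar multiplication and preserves inclusions.
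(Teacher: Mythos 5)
Your proposal is correct and takes essentially the same approach as the paper: the paper's own proof consists of the single line ``This follows immediately from \ref{estimate} and \ref{Remark}'', and your argument is exactly the detailed version of that deduction, passing via Remark \ref{Remark} to the inclusion operator $T$ and identifying $l_k(T)$ with $l_k(X,Y)$ by restricting the $\eps$--sandwich to each finite--dimensional subspace $N$.
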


\begin{proof}
This follows immediately from \ref{estimate} and \ref{Remark}.
\end{proof}

\section{A characterization of non--nuclear metrizable locally convex spaces}

In this section we define  locally convex nuclear vector spaces and
prove a characterization of non--nuclear metrizable locally convex
spaces.

 \begin{Definition}
A (real or complex) locally convex vector space   $E$ is called a
{\bf locally convex  nuclear vector space}   if for every symmetric
and convex neighborhood $U$ of $0$ there exists a symmetric and
convex neighborhood $W$ of $0$ such that $d_k(W,U)\le 1/k$ for all
$k\in\N$.

\end{Definition}

\begin{Remark}\label{Bemerkung}
The statement
$$d_k(W,U)\le 1/k$$ in the definition of a  locally convex nuclear vector
 can be replaced by $$d_k(W,U)\le k^{-\eps}$$
for any $\eps>0$. {\rm [This is an easy consequence of
\ref{properties} (i).]}
\end{Remark}

\begin{Proposition}\label{3.3}
 Let $E$ be a
locally convex vector space.  The following assertions are
equivalent:
\begin{itemize}
\item[(i)]
$E$ is a locally convex  nuclear vector space,
\item[(ii)]
  for every symmetric and convex $0$--neighborhood $U$ there is
a symmetric and convex $0$--neighborhood $W$ such that
$(kd_k(W,U))_{k\in\N}\in \ell^\infty$,
\item[(iii)]
for every $m\in\N$ and
  for every symmetric and convex $0$--neighborhood $U$ there is
a symmetric and convex $0$--neighborhood $W$ such that\\
$(k^md_k(W,U))_{k\in\N}\in \ell^\infty$.
\end{itemize}
\end{Proposition}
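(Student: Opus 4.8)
The plan is to prove the cycle of implications $(iii)\Rightarrow(ii)\Rightarrow(i)\Rightarrow(iii)$. The implication $(iii)\Rightarrow(ii)$ is immediate: taking $m=1$ in $(iii)$ yields exactly $(ii)$. Likewise $(ii)\Rightarrow(i)$ should be essentially trivial, since $(kd_k(W,U))_{k\in\N}\in\ell^\infty$ means there is a constant $c$ with $d_k(W,U)\le c/k$ for all $k$; after rescaling $W$ (replacing $W$ by $\tfrac1c W$, which is still a symmetric convex $0$--neighborhood, and using \ref{properties}(iv)) one obtains $d_k(\tfrac1c W,U)=\tfrac1c d_k(W,U)\le 1/k$, which is the defining inequality for a locally convex nuclear vector space. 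Thus the real content lies in $(i)\Rightarrow(iii)$.

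For $(i)\Rightarrow(iii)$, the idea is to iterate the nuclearity condition and combine the successive estimates using the submultiplicativity property \ref{properties}(i), namely $d_{k+l-1}(X,Z)\le d_k(X,Y)\,d_l(Y,Z)$. Fix $m\in\N$ and a symmetric convex $0$--neighborhood $U=U_0$. Applying $(i)$ repeatedly produces a chain of symmetric convex $0$--neighborhoods $U_0\supseteq U_1\supseteq\cdots\supseteq U_{m}$ with $d_k(U_{j+1},U_j)\le 1/k$ for each $j$ and all $k$. Setting $W:=U_m$ and chaining the estimate \ref{properties}(i) across the $m$ steps, one expects a bound of the shape $d_{mk-m+1}(W,U)\le\prod_{j=0}^{m-1}d_{k_j}(U_{j+1},U_j)$ with the indices $k_j$ chosen so that $\sum(k_j-1)=mk-m$. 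Choosing each $k_j=k$ gives $d_{mk-m+1}(W,U)\le k^{-m}$, so that along the arithmetic subsequence of indices $mk-m+1$ the product $(mk-m+1)^m d_{mk-m+1}(W,U)$ stays bounded by a constant depending only on $m$.

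The main obstacle is purely bookkeeping: upgrading the estimate from the sparse subsequence $\{mk-m+1:k\in\N\}$ to \emph{all} indices $n\in\N$, so as to genuinely land in $\ell^\infty$. Here I would use the monotonicity built into (S1), i.e.\ $d_k(W,U)\ge d_{k+1}(W,U)$: for an arbitrary index $n$, choose the largest $k$ with $mk-m+1\le n$, so that $n<m(k+1)-m+1=mk+1$ and hence $n$ and $mk-m+1$ differ by at most $m-1$, a bounded amount. Monotonicity gives $d_n(W,U)\le d_{mk-m+1}(W,U)\le k^{-m}$, while $n\le mk$ gives $n^m\le m^m k^m$; multiplying yields $n^m d_n(W,U)\le m^m$, uniformly in $n$. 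This establishes $(k^m d_k(W,U))_{k\in\N}\in\ell^\infty$ and closes the cycle. I expect no deeper difficulty than carefully tracking these index shifts, since all the analytic substance is already packaged in \ref{properties}(i) and the $s$--number axioms.
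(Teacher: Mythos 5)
Your proof is correct and takes essentially the same route as the paper: the paper's one-line proof invokes Remark~\ref{Bemerkung} together with the homogeneity of Kolmogoroff numbers (\ref{properties}~(iv)), and the justification of Remark~\ref{Bemerkung} is precisely your iterated application of \ref{properties}~(i), including the index bookkeeping passing from the subsequence $mk-m+1$ to all indices. In effect you have simply unfolded in full detail the chaining and rescaling arguments that the paper compresses into those two citations.
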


\begin{proof}
This is a direct consequence of \ref{Bemerkung} and \ref{properties}
(iii).\end{proof}

\begin{Theorem}\label{CharFrechet}
Let $E$ be a metrizable locally convex vector space which is not
nuclear. Then there exists a symmetric and convex, totally bounded
subset $K$ in $E$ and a symmetric and convex neighborhood $U$ of
$0$ such that $(k^3d_k(K,U))$ is unbounded.

Conversely, if $E$ is a  locally convex  nuclear vector space,
$K\subseteq E$ is a totally bounded symmetric and convex subset, $U$
is an arbitrary neighborhood of $0$, and $n\in\N$, then the
sequence $( k^nd_k(K,U))$ is bounded.

\end{Theorem}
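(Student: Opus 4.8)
The plan is to establish the two implications separately, the first (the characterization of non-nuclearity) being the substantive one and the second (the converse) following routinely from the definitions.

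\medskip

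\noindent\textbf{The converse direction.} I would handle this first since it is quick. Assume $E$ is a locally convex nuclear vector space, let $K$ be a totally bounded symmetric convex set, $U$ a $0$--neighborhood, and $n\in\N$. By \ref{3.3}(iii) applied with $m=n+1$, there is a symmetric convex $0$--neighborhood $W$ with $(k^{n+1}d_k(W,U))\in\ell^\infty$. Since $K$ is totally bounded it can be covered by finitely many translates of $W$, so after scaling $K\subseteq cW$ for some $c>0$ (using that $W$ absorbs $K$); by \ref{properties}(iv), $d_k(K,U)\le c\,d_k(W,U)$, and therefore $(k^n d_k(K,U))\le (k^n c\,d_k(W,U))$ is bounded.

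\medskip

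\noindent\textbf{The forward direction.} Here I would argue by contraposition in the following sense: assuming the conclusion fails for \emph{every} choice of $K$ and $U$, I would deduce that $E$ is nuclear. Equivalently, assume that for every totally bounded symmetric convex $K$ and every $0$--neighborhood $U$ the sequence $(k^3 d_k(K,U))$ is bounded, and show nuclearity via \ref{3.3}. Since $E$ is metrizable, fix a decreasing countable base $(U_n)$ of symmetric convex $0$--neighborhoods. The key idea is to feed into the hypothesis a cleverly constructed totally bounded set $K$ built from the neighborhood filtration, so that the boundedness of $(k^3 d_k(K,U))$ forces a Kolmogorov-diameter estimate of the form $d_k(W,U)\le Ck^{-3}$ relating two neighborhoods. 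The natural candidate for $K$ is the closed convex symmetric hull of a suitable null sequence, or a set of the form $\bigcap$ or $\bigcup$ of scaled neighborhoods chosen so that $K$ is totally bounded yet large enough relative to each $U_n$. Having produced, for a given $U$, a neighborhood $W$ with $(k^3 d_k(W,U))\in\ell^\infty$, nuclearity follows from \ref{3.3}(iii) with $m=3$ (or after invoking \ref{Bemerkung}).

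\medskip

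\noindent\textbf{The main obstacle.} The crux is the construction in the forward direction: producing a single totally bounded symmetric convex $K$ whose diameters $d_k(K,U)$ control the diameters $d_k(W,U)$ of an actual neighborhood $W$. The bridge I expect to exploit is Corollary \ref{Cor}, which shows $d_k(X,Y)$ is comparable (up to the factor $k^2$) to its finite-dimensional restrictions $l_k(X,Y)$: because a non-nuclear metrizable space must fail the diameter decay $d_k(W,U)\le 1/k$ for some $U$ and all $W$, one gets $k$--dimensional subspaces $N$ on which $d_k(W\cap N,U)$ is large, and these finite-dimensional witnesses can be assembled into a totally bounded $K$ (a finite-dimensional slice being automatically totally bounded after intersecting with a neighborhood). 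The extra factor $k^2$ from \ref{Cor}, combined with the failure of $kd_k$--boundedness, is exactly what is amplified into the $k^3$ in the statement; tracking these exponents and confirming that the assembled set is genuinely totally bounded is where the real care is needed.
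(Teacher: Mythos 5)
Your converse direction is correct, and in fact slightly more streamlined than the paper's: you use that a totally bounded set is bounded, hence absorbed by $W$ (so $K\subseteq cW$ and $d_k(K,U)\le c\,d_k(W,U)$), whereas the paper covers $K$ by $F+W$ with $F$ finite and shifts the index by $f={\rm dim}\langle F\rangle$, getting $d_{k+f}(K,U)\le d_k(W,U)$; both work. Your strategy for the forward direction is also the paper's strategy: use \ref{3.3} to get a single $U$ with $(kd_k(W,U))\notin\ell^\infty$ for every neighborhood $W$, use \ref{Cor} to pass to finite--dimensional sections at the cost of a factor $k^2$ (whence the exponent $3=1+2$), and assemble the sections into a totally bounded $K$. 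However, precisely at the point you yourself flag as "where the real care is needed," there is a genuine gap, and the one concrete claim you do make there is false: a finite--dimensional slice of a neighborhood is \emph{not} "automatically totally bounded after intersecting with a neighborhood." In $\R^2$ with $U=\{(x,y):\,|y|\le 1\}$ and $N$ the $x$--axis, $U\cap N=N$ is unbounded. The paper must rule this out: since $d_{k_n}(U_n\cap N_n,U)\ge n/k_n^3>0$, the set $U_n\cap N_n$ cannot contain a nontrivial subspace $L$ (otherwise projecting along $L$ and using \ref{properties}(ii) together with (S4) would force that diameter to be $0$), and only then is $U_n\cap N_n$ bounded, hence totally bounded, in $N_n$.

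The second missing piece is the actual assembly. The paper's choice is $K:={\rm conv}\bigcup_{n\in\N}(U_n\cap N_n)$, where it is essential that the $n$--th witness $N_n$ is cut out of the $n$--th member of a \emph{decreasing} basis $(U_n)$ with $U_n\subseteq U$: given any neighborhood $W$, choose $m$ with $U_m+U_m\subseteq W$; then the tail ${\rm conv}\bigcup_{n\ge m}(U_n\cap N_n)$ lies in $U_m$ (the sets decrease and $U_m$ is convex), while the head ${\rm conv}\bigcup_{n<m}(U_n\cap N_n)$ is the convex hull of finitely many totally bounded sets, hence totally bounded, hence contained in $F+U_m$ for some finite $F$; altogether $K\subseteq F+U_m+U_m\subseteq F+W$. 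Your proposal only gestures at candidates ("closed convex hull of a suitable null sequence, or $\bigcap$ or $\bigcup$ of scaled neighborhoods") without committing, and without anchoring the $n$--th witness inside $U_n$ there is no reason the assembled set is totally bounded; the unboundedness of $(k^3d_k(K,U))$, by contrast, is the trivial part, since $K\supseteq U_n\cap N_n$ gives $d_{k_n}(K,U)\ge n/k_n^3$. (Your contrapositive framing is also superfluous: once the witnesses are assembled, the argument is direct.)
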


\begin{proof}
Let us assume first that $E$ is a metrizable locally convex  nuclear
vector space which is not nuclear. According to \ref{3.3}, there
exists a symmetric and convex neighborhood $U$ of $0$ such that for
any other symmetric and convex neighborhood $W$ of $0$
$$(kd_k(W,U))\notin\ell^\infty\quad(\ast).$$

Let $(U_n)$ be a decreasing neighborhood basis of $0$ consisting of
symmetric and convex sets. We may assume  that $U_n\subseteq U$ for
all $n\in\N$. $(\ast)$ implies that for every $n\in\N$ there exists
$k_n\in \N$ such that $d_{k_n}(U_n,U)> \frac{n}{k_n}$.

According to \ref{Cor}, $\dis l_{k_n}(U_n,U)\ge \frac{1}{k_n^2}
d_{k_n}(U_n,U)> \frac{n}{k_n^3}$ and hence
 the following holds:
$$\forall n\in\N\  \exists N_n\le E,\ {\rm
dim}(N_n)\le k_n: \ d_{k_n}(U_n\cap N_n,U)\ge
 \frac{n}{k_n^3}.$$

The maximal subspace contained in the symmetric and convex set
$U_n\cap N_n$ is $\{0\}$. Indeed, suppose this subspace $L$ is not
trivial; consider the canonical projection $Q^E_L:E\rightarrow E/L$.
Since $d_1(U_n\cap N_n,U)\le d_1(U_n,U)<\infty$, \ref{properties}
implies $\dis d_{k_n}(U_n\cap N_n,U)=d_{k_n}(Q^E_L(U_n\cap
N_n),Q^E_L(U))=0$. The last equality is a consequence of (S 4),
since ${\rm dim} Q^E_F(N_n)<k_n$.

For $K:={\rm conv}\bigcup_{n\in\N}(U_n\cap N_n)$ we obtain
$$d_{k_n}(K,U)\ge d_{k_n}(U_n\cap N_n,U)\ge\frac{n}{k_n^3}\quad\forall\, n\in\N.$$ Hence it
suffices to show that $K$ is totally bounded.

\vspace{0.3cm}

 The sets $U_n\cap N_n$ are bounded
 (since they contain only the trivial subspace)
and therefore  totally bounded, by the equivalence of norms on
finite--dimensional vector spaces.
  For a
fixed neighborhood $W$ of $0$ there exists $m\in \N$ such that
$U_m+U_m\subseteq W$. Since $\dis {\rm conv}
\bigcup_{n=1}^{m-1}U_n\cap N_n$  is totally bounded (cf. (5.1), p.25
and (4.3), p.50 in \cite{Schaefer}), there exists a finite set $F$
such that $\dis {\rm conv} \bigcup_{n=1}^{m-1}U_n\cap N_n \subseteq
F+U_m$. Since the $(U_n)$ are decreasing and $U_m$ is convex, $\dis
{\rm conv} \bigcup_{n\ge m}U_n\cap N_n\subseteq U_m$. Hence we
obtain:
$${\rm conv} \bigcup_{n\ge 1} U_n\cap N_n\subseteq {\rm conv}
\bigcup_{n=1}^{m-1}U_n\cap N_n\, +\, {\rm conv} \bigcup_{n\ge m}
U_n\cap N_n\subseteq F+U_m+U_m\subseteq F+W,$$ which shows that $K$
is totally bounded.

\vspace{0.3cm}

Conversely, assume that $E$ is a locally convex  nuclear vector
space, $K$ is a symmetric and convex totally bounded subset and $U$
is a symmetric and convex neighborhood of $0$. For $n\in \N$, there
exists a neighborhood $W$ such that $d_k(W,U)\le k^{-n}$
(\ref{Bemerkung}). Since $K$ is totally bounded, there exists a
finite, symmetric set $F$ such that $K\subseteq F+W$. Let $f:={\rm
dim }\langle F\rangle$. Then $d_{k+f}(K,U)\le d_{k+f}({\rm conv}F+W,
U)\le d_k(W,U)\le k^{-n}\le c\cdot (k+f)^{-n}$ for suitable $c>0$.
The assertion follows.

\end{proof}

\begin{Corollary}
For a   metrizable locally convex  vector space  $E$ the following
assertions are equivalent:
\begin{itemize}
\item[(i)] $E$ is nuclear.
\item[(ii)] For every symmetric and convex totally bounded subset
$K$, for every symmetric and convex $0$--neighborhood $U$ the
sequence $(k^3d_k(K,U))$ is bounded.
\end{itemize}
\end{Corollary}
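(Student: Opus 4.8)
The plan is to prove the Corollary as a direct consequence of Theorem~\ref{CharFrechet}, which already packages both implications in the right form. First I would prove the contrapositive of (i)$\Rightarrow$(ii), or rather observe that both directions follow almost verbatim from the theorem. For the implication (i)$\Rightarrow$(ii), I would simply invoke the second (``Conversely'') half of Theorem~\ref{CharFrechet}: if $E$ is nuclear, then for every symmetric and convex totally bounded $K$ and every $0$--neighborhood $U$, the sequence $(k^n d_k(K,U))$ is bounded for \emph{every} $n\in\N$, in particular for $n=3$. This requires no metrizability and no extra work, since the theorem's converse statement is stated for arbitrary locally convex nuclear vector spaces.

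For the reverse implication (ii)$\Rightarrow$(i), I would argue by contraposition. Suppose $E$ is metrizable but \emph{not} nuclear. Then the first half of Theorem~\ref{CharFrechet} applies directly: it produces a symmetric and convex totally bounded subset $K\subseteq E$ and a symmetric and convex $0$--neighborhood $U$ such that $(k^3 d_k(K,U))$ is \emph{unbounded}. This is exactly the negation of (ii), so the contrapositive is established. Here the metrizability hypothesis in the Corollary is essential and is precisely what the first half of the theorem demands.

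The main (and only) subtlety to flag is the matching of hypotheses between the two halves of Theorem~\ref{CharFrechet}: the forward direction needs metrizability while the converse does not, so I would be careful to state that metrizability is used only for (ii)$\Rightarrow$(i). Beyond this bookkeeping there is essentially no obstacle, as the theorem does all the analytic work; the Corollary is a clean reformulation.

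\begin{proof}
If $E$ is nuclear, then by the second assertion of \ref{CharFrechet}, applied with $n=3$, the sequence $(k^3 d_k(K,U))$ is bounded for every symmetric and convex totally bounded $K$ and every symmetric and convex $0$--neighborhood $U$. This proves (i)$\Rightarrow$(ii).

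Conversely, suppose $E$ is not nuclear. Since $E$ is metrizable, the first assertion of \ref{CharFrechet} yields a symmetric and convex totally bounded subset $K$ and a symmetric and convex $0$--neighborhood $U$ with $(k^3 d_k(K,U))$ unbounded, contradicting (ii). Hence (ii)$\Rightarrow$(i).
\end{proof}
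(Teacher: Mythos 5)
Your proof is correct and is exactly the intended argument: the paper states this Corollary without proof as an immediate consequence of Theorem \ref{CharFrechet}, with the first half of that theorem (requiring metrizability) giving (ii)$\Rightarrow$(i) by contraposition and the ``Conversely'' half giving (i)$\Rightarrow$(ii) with $n=3$. Your bookkeeping about where metrizability is needed matches the paper's hypotheses precisely.
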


\begin{Remark}
The vector space $\R^{(I)}$ endowed with the asterisk--topology (which makes $\R^{(I)}$ the locally convex direct sum)
 is
not nuclear if $I$ is uncountable. However, the totally bounded
subsets are contained in finite--dimensional subspaces.This implies
that for every totally bounded, symmetric and convex set $K$ and
every symmetric and convex $0$--neighborhood $U$ the sequence
$(d_k(K,U))$ is eventually zero, in particular $(k^nd_k(K,U))$ is
bounded for every $n\in\N$.
\end{Remark}

\section{A property of metrizable nuclear vector groups which are not nuclear}

In this section  we recall the definition of locally convex vector groups and  nuclear
vector groups, which are, roughly speaking, locally convex spaces,
respectively locally convex  nuclear vector spaces over the scalar
field $\R$ endowed with the discrete topology. These are of
interest, since the class of Hausdorff quotients of subgroups of
nuclear vector groups coincides  with the class of all nuclear
groups (to be defined in \ref{nuclear}). We show a property of
non--nuclear metrizable locally convex vector groups which will be
applied to prove an analogous property for non--nuclear metrizable
groups (\ref{anstrengend}). This will be the key lemma in order to
show that the character group of a nuclear $k_\omega$--group is
again nuclear.

 \begin{Definition}
 A vector space $E$ endowed with a Hausdorff group topology ${\cal
 O}$ is called a {\bf locally convex vector group} if there is a
 neighborhood basis of $0$ consisting of symmetric and convex sets.

A    locally convex vector group   $E$ is called a {\bf nuclear
vector group}  if for every symmetric and convex neighborhood $U$
of $0$ there exists a symmetric and convex neighborhood $W$ of $0$
such that $d_k(W,U)\le 1/k$ for all $k\in\N$.
\end{Definition}

\begin{Remark}\label{EigVG}
\begin{itemize}
\item[(i)] According to Proposition 3 in \cite{Kenderov}, $(\R^n,{\cal
O})$ is a locally convex vector group if and only if there exists a
basis $(b_1,\ldots,b_n)$ of $\R^n$ and $0\le m\le n$ such that
${\cal O}$ induces the usual topology on ${\langle
b_1,\ldots,b_m\rangle}_{\R}$ and the discrete topology on ${\langle
b_m,\ldots,b_n\rangle}_{\R}$.

This shows, that in a locally convex vector group $(E,{\cal O})$ a
set of the form $\{tx|\ |t|\le 1\}$ ($x\in E$) is not necessarily
compact.

\item[(ii)] However, the following holds:
 Let $E$ be a
locally convex vector group. For every $\lambda\in \R$, the scalar
multiplication $m_\lambda:E\rightarrow E,\ x\mapsto \lambda x$ is
continuous.

\item[(iii)]This property shows that analogous formulations of \ref{Bemerkung}
and \ref{3.3} hold true for nuclear vector groups instead of nuclear
vector spaces.
\end{itemize}

\end{Remark}

\begin{Lemma}\label{finite}
Let $Y$  be a bounded, symmetric and convex subset  of $\R^n$. For
every $\eps>0$, there exists a finite subset $F\subseteq Y$ such that
$(1-\eps)Y\subseteq {\rm conv}(F )$.
\end{Lemma}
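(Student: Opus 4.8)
The plan is to reduce the statement to a fact about the closed unit ball of a norm in finite dimensions and to produce $F$ as a suitably scaled finite net on the corresponding unit sphere.

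First I would pass to $V:=\langle Y\rangle$, the linear span of $Y$, which we may regard as $\R^m$ with $m=\dim V\le n$. Since $Y$ spans $V$, contains $0$, and is symmetric and convex, it has nonempty interior relative to $V$: choosing linearly independent $y_1,\dots,y_m\in Y$, the set ${\rm conv}\{0,\pm y_1,\dots,\pm y_m\}\subseteq Y$ is $m$--dimensional and thus has nonempty interior in $V$. Because $Y$ is bounded with $0$ in its interior, its Minkowski functional $p$ is a norm on $V$ satisfying $\{p<1\}\subseteq Y\subseteq\{p\le 1\}=\overline{Y}$, and $\overline{Y}$ is compact. Consequently $(1-\eps)Y\subseteq\{p\le 1-\eps\}$, so (assuming, as we may, that $0<\eps<1$) it suffices to find finitely many points $f_1,\dots,f_N$ with $p(f_i)<1$ --- which forces $f_i\in Y$ --- such that $\{p\le 1-\eps\}\subseteq{\rm conv}\{f_1,\dots,f_N\}$.

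Next I would build the $f_i$ from a finite net on the unit sphere $S:=\{x\in V:p(x)=1\}$, which is compact, hence totally bounded for $p$. Fix $\delta>0$ and choose $s_1,\dots,s_N\in S$ so that every $s\in S$ satisfies $p(s-s_i)\le\delta$ for some $i$; set $f_i:=(1-\eps/2)\,s_i$, so that $p(f_i)=1-\eps/2<1$ and $f_i\in Y$, and put $P:={\rm conv}\{f_1,\dots,f_N\}$. To verify $\{p\le 1-\eps\}\subseteq P$ I would argue by separation: if some $x$ with $p(x)\le 1-\eps$ were not in the compact convex polytope $P$, there would be a linear functional $\phi$ with $\phi(x)>\max_i\phi(f_i)$, and after normalizing $\phi$ in the dual norm so that $\max_{p(y)\le 1}\phi(y)=1$ one has $|\phi(z)|\le p(z)$ for all $z\in V$. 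Picking $s\in S$ with $\phi(s)=1$ and a net point $s_i$ with $p(s-s_i)\le\delta$ gives
\[
\phi(f_i)=(1-\eps/2)\phi(s_i)\ge(1-\eps/2)\bigl(\phi(s)-p(s-s_i)\bigr)\ge(1-\eps/2)(1-\delta),
\]
while $\phi(x)\le p(x)\le 1-\eps$. Hence any $\delta$ with $(1-\eps/2)(1-\delta)\ge 1-\eps$ (e.g. $\delta\le\tfrac{\eps/2}{1-\eps/2}$) yields $\max_i\phi(f_i)\ge 1-\eps\ge\phi(x)$, contradicting the separation and proving the containment.

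The steps that require the most care are the reduction to the span and the use of the \emph{open} unit ball. Passing to $V$ is what turns $Y$ into a set with nonempty interior, so that $p$ is an honest norm; and since $Y$ need not be closed, the net points must be pulled strictly inside the ball (the factor $1-\eps/2$), which is exactly what keeps $f_i\in\{p<1\}\subseteq Y$ while still leaving $P$ large enough to contain $\{p\le 1-\eps\}$. The one genuinely quantitative point is the uniform bound $|\phi(s-s_i)|\le p(s-s_i)\le\delta$ for every dual--unit functional $\phi$, which is what allows a single $\delta$ to work simultaneously for all separating directions.
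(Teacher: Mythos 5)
Your proof is correct, but it takes a genuinely different route from the paper's. The paper first reduces (as you do) to the case $\langle Y\rangle=\R^n$, and handles the possible non-closedness of $Y$ by the dilation trick $\overline{Y}\subseteq(1+\eps)Y$, so that $Y$ may be assumed compact; it then considers the family ${\cal F}$ of all finite symmetric subsets of $Y$ containing $n$ linearly independent vectors, observes that the interiors of the sets ${\rm conv}(F)$, $F\in{\cal F}$, cover the compact set $(1-\eps)Y$, and extracts a finite subcover whose union is the desired $F$ --- a short, soft, purely compactness-based argument with no quantitative content. You instead work with the Minkowski functional $p$ of $Y$ as a norm on the span, take a $\delta$-net on the unit sphere, pull it strictly inside via the factor $1-\eps/2$ (your way of coping with non-closedness, replacing the paper's dilation trick), and verify the inclusion $\{p\le 1-\eps\}\subseteq{\rm conv}\{f_1,\ldots,f_N\}$ by Hahn--Banach separation together with the dual-norm bound $|\phi(s-s_i)|\le p(s-s_i)\le\delta$. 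Your argument is heavier (it invokes separation and duality, where the paper needs only compactness), but it buys an explicit quantitative relation, namely that any net with $\delta\le\frac{\eps/2}{1-\eps/2}$ works, hence also a bound on the size of $F$ in terms of covering numbers of the sphere; the paper's argument gives no such control. Both proofs are complete and correct.
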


\begin{proof}
Without loss of generality we may assume that $\langle
 Y\rangle=\R^n$ and that $Y$ is compact if $\R^n$ is endowed with the usual
 topology.

[The first assumption is trivial, the second follows from $\dis
Y\subseteq \ol{Y}\subseteq (1+\eps)Y$. So if there exists a finite
set $F\subseteq \ol{Y}$ such that $\dis (1-\eps)\ol{Y}\subseteq{\rm
conv}(F)$, it follows that $\dis
\frac{1-\eps}{1+\eps}\ol{Y}\subseteq (1+\eps)^{-1}{\rm conv F}={\rm
conv}((1+\eps)^{-1} F)$. Since $\dis 1/(1+\eps)F$ is a finite subset
of $Y$, it follows that $Y$ may be assumed to be compact.]

 Let ${\cal F}$ be the set of all finite, symmetric subsets of
$  Y$ which contain $n$ linearly independent vectors. This set is
not empty. For $F\in {\cal F},$ the set ${\rm conv}(F )$  has
non--empty interior and it is easy to check that $\dis
(1-\eps)Y\subseteq\bigcup_{F\in{\cal F}}{\rm int}({\rm conv}(F))$.
Since $(1-\eps)Y$ is compact, there exists a finite subcover and the
union of the finite sets appearing in the subcover has the desired
properties.
\end{proof}



 \begin{Theorem} \label{kompakt}
Let $E$ be a metrizable, non--nuclear locally convex vector group.

 There exists a symmetric and convex
neighborhood $U$ of $0$ and a null sequence $(y_n) $ in $E$ such
that for all $n_0\in\N$

 $$(k^3 d_k({\rm conv}(\{y_n|\ n\ge n_0\}) ,U))\notin\ell^\infty.$$
\end{Theorem}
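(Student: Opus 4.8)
The plan is to mimic the structure of the proof of Theorem~\ref{CharFrechet}, but to replace the convex hull of the finite-dimensional slices $U_n \cap N_n$ with the convex hull of a genuine \emph{null sequence}, exploiting the extra room provided by the $k^3$ factor (we have $k^9$ available in the group setting later, but here the goal is $k^3$). Since $E$ is a metrizable locally convex vector group which is not nuclear, the analogue of Proposition~\ref{3.3} for nuclear vector groups (guaranteed by Remark~\ref{EigVG}(iii)) furnishes a symmetric convex $0$-neighborhood $U$ such that for \emph{every} symmetric convex $0$-neighborhood $W$ one has $(k d_k(W,U)) \notin \ell^\infty$. Fix a decreasing neighborhood basis $(U_n)$ of symmetric convex sets with $U_n \subseteq U$. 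As in the proof of \ref{CharFrechet}, for each $n$ choose $k_n$ with $d_{k_n}(U_n,U) > n/k_n$, and then invoke Corollary~\ref{Cor} to produce a subspace $N_n \le E$ with $\dim N_n \le k_n$ such that
$$d_{k_n}(U_n \cap N_n, U) \ge \frac{1}{k_n^2} d_{k_n}(U_n,U) > \frac{n}{k_n^3}.$$
As before, the maximal subspace inside the bounded set $U_n \cap N_n$ is trivial, so $U_n \cap N_n$ is a bounded symmetric convex subset of the finite-dimensional space $N_n$.

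The new ingredient is to convert each slice $U_n \cap N_n$ into finitely many points of a single null sequence. Here I would apply Lemma~\ref{finite}: since $U_n \cap N_n$ is a bounded symmetric convex set in a finite-dimensional space, for $\eps = 1/2$ (say) there is a finite symmetric set $F_n \subseteq U_n \cap N_n$ with $\tfrac{1}{2}(U_n \cap N_n) \subseteq \mathrm{conv}(F_n)$. Because $F_n \subseteq U_n$ and $(U_n)$ is a neighborhood basis of $0$, enumerating $\bigcup_n F_n$ into a sequence $(y_m)$ yields a \emph{null sequence}: any neighborhood $W$ of $0$ contains all $U_n$ for $n$ large, hence contains all but finitely many points of the union. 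This is the key point where the vector-group structure matters, and where Lemma~\ref{finite} does its work. With $K := \mathrm{conv}(\{y_m : m\}) \supseteq \mathrm{conv}(F_n) \supseteq \tfrac12 (U_n \cap N_n)$, monotonicity and homogeneity of the Kolmogoroff numbers (Lemma~\ref{properties}(iv)) give
$$d_{k_n}(K,U) \ge d_{k_n}\bigl(\tfrac12 (U_n\cap N_n), U\bigr) = \tfrac12\, d_{k_n}(U_n\cap N_n, U) > \frac{n}{2k_n^3},$$
so $(k_n^3 d_{k_n}(K,U)) > n/2$ is unbounded, establishing the claim when the full sequence is used.

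The genuine obstacle is the clause ``for all $n_0 \in \N$,'' i.e.\ that the conclusion must survive deleting any finite initial segment of the null sequence and replacing $K$ by $\mathrm{conv}(\{y_n : n \ge n_0\})$. The points coming from the early slices $F_1,\dots,F_{n_0-1}$ lie in a finite-dimensional space and contribute nothing asymptotically, but I must be careful that removing them does not destroy the lower bound for the \emph{large} indices. The clean way to arrange this is to order the enumeration so that all points of $F_n$ precede all points of $F_{n+1}$; then for any fixed $n_0$, there is $n_1$ with the property that every $F_n$ with $n \ge n_1$ is entirely contained in the tail $\{y_n : n \ge n_0\}$. Hence for all such $n$ the above estimate goes through verbatim with $K$ replaced by $\mathrm{conv}(\{y_n : n \ge n_0\})$, giving $d_{k_n}(\mathrm{conv}(\{y_n:n\ge n_0\}), U) > n/(2k_n^3)$ for infinitely many $n$, so the sequence $(k^3 d_k(\cdot,U))$ is again unbounded. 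I would finish by noting that the bound $(1-\eps)$ in Lemma~\ref{finite} only costs a harmless constant factor that is absorbed into the unboundedness, and by verifying that $d_1(K,U) < \infty$ so that all the cited properties of $d_k$ apply; this last point follows from $K \subseteq U$ (each $y_m \in U_n \subseteq U$ and $U$ is convex).
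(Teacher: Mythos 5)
Your proposal is correct and follows essentially the same route as the paper's own proof: the same reduction via Proposition~\ref{3.3} and Remark~\ref{EigVG}(iii), the same use of Corollary~\ref{Cor} to pass to finite--dimensional slices $U_n\cap N_n$, the same application of Lemma~\ref{finite} to replace each slice by a finite symmetric set $F_n$, and the same block--ordered enumeration of $\bigcup_n F_n$ into a null sequence so that every tail contains entire blocks $F_m$ for large $m$. Your explicit checks (symmetry of the tails via including both signs, and $d_1(K,U)<\infty$ from $K\subseteq U$) are slightly more careful than the paper's write-up but change nothing of substance.
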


\begin{proof}
We fix a decreasing neighborhood basis $(U_n)$ consisting of
symmetric and convex sets.
  Since $E$ is not nuclear, there exists, according to \ref{3.3} and \ref{EigVG} (iii), a
  symmetric and  convex
$0$--neighborhood $U$ such that
$$(kd_k(U_n,U))\notin\ell^\infty\quad \forall n\in\N.
$$
We may assume that $U_n\subseteq U$ for all $n\in\N $.

Hence we have:
\begin{eqnarray}\forall\; n\in\N\quad\exists\; k_n\in\N\quad k_n
d_{k_n}(U_n,U)>  n.\end{eqnarray} According to \ref{Cor}, $\dis
l_{k_n}(U_n,U)\ge \frac{1}{k_n^2}d_{k_n}(U_n,U)>\frac{n}{k_n^3},$ so
there exists a sequence $(E_n)$ of
  subspaces where ${\rm dim}E_n\le k_n$
such that
\begin{eqnarray}d_{k_n}( U_n\cap E_n,U) \ge
\frac{n}{k_n^3}\quad\quad \forall n\in\N.\end{eqnarray}

As in the proof of \ref{CharFrechet} one shows that $U_n\cap E_n$ is
bounded in the finite dimensional space $E_n$.

According to \ref{finite}, there exists a finite, symmetric set
$F_n\subseteq U_n\cap E_n$ such that ${\rm conv}F_n \supseteq
 \frac{1}{2} (U_n\cap E_n)$; hence we obtain
\begin{eqnarray}d_{k_n}( {\rm
conv}(F_n),U)\ge \frac{1}{2}d_{k_n}(U_n\cap
E_n,U)\stackrel{(2)}{\ge} \frac{n}{2k_n^3} \quad\quad \forall
n\in\N.\end{eqnarray}

Since $F_m\subseteq U_m$, there exists a null sequence
$(y_j)_{j\in\N}$ such that $\bigcup_{m\in\N} F_m=\{\pm y_j|\
j\in\N\}$ and which satisfies that for every $n\in \N$ there exists
$m_n\in\N$ such that $\bigcup_{m\ge m_n} F_m\subseteq\{\pm y_j|\
j\ge n\}$. Since
$$d_{k_m}({\rm conv}\{\pm y_j|\ j\ge n \},U)\ge
 d_{k_m}({\rm conv}(F_m),U)\stackrel{(3)}{\ge} \frac{m}{2k_m^3}\quad\forall\
m\ge m_n,$$  we obtain $(k^3d_k({\rm conv}\{y_j|\ j\ge
n\},U))\notin\ell^\infty$  for every $n\in\N$.
\end{proof}

\section{A property of non--nuclear metrizable groups}

\begin{Notation}\label{not}
For an abelian group $G$ and   subsets $A$ and $B$ of $G$ we put
$$(d_k(A,B))\le (c_k)$$ (for $(c_k)\in [0,\infty]^{\N}$) if there
exists a vector space $E$ and symmetric and convex subsets $X$ and
$Y$ with $d_k(X,Y)\le c_k$ (for all $k\in \N$),  a subgroup $H$ of
$E$, and a homomorphism $\phi:H\rightarrow G$ such that $A\subseteq
\phi(X\cap H)$ and $\phi(Y\cap H)\subseteq Y$.
\end{Notation}

\begin{Remark}\label{surjektiv}
Without loss of generality one may assume that the homomorphism
$\phi$ in \ref{not} is surjective. {\rm [We may replace $E$
by $E\times \R^{(G)}$, $H$ by $\Z^{(G)}$, $\phi$ by the surjective
homomorphism $H\times \Z{(G)}\rightarrow G,\ (h,(k_g))\mapsto h+\sum
k_g g$, and $X,\ Y$ by $X\times \{0\}$, $Y\times \{0\}$,
respectively.]}
\end{Remark}

\begin{Definition}\label{nuclear}
An abelian Hausdorff group $G$ is called a {\bf nuclear group} if
for every $m\in\N$, every $c>0$, and every neighborhood $U$ of the
neutral element $0$ there exists another neighborhood $W$ of $0$
such that
$$(d_k(W,U))\le (ck^{-m}).$$
\end{Definition}

\begin{Definition}
For a subset $A$ of a topological group $G$, the set
$A^\triangleright:=\{\chi\in G^\wedge|\ \chi(A)\subseteq \T_+\}$
(with $\T_+:=\{z\in\T|\ {\rm Re}z\ge 0\}$) is called {\bf polar} of
$A$. Conversely, for a subset $B\subseteq G^\wedge$, we define
$B^\triangleleft:=\{x\in G:\ \chi(x)\in\T_+\ \forall\chi\in G^\wedge
\}$.

 A subset $A$ of a topological
group $G$ is called {\bf quasi--convex} if
$A=(A^\triangleright)^\triangleleft$. A topological group $G$ is
called {\bf locally quasi--convex} if there is a neighborhood basis
of the neutral element consisting of quasi--convex sets.
\end{Definition}

\begin{Remarks}
The notation of  quasi--convexity is  a generalization of the
description of convex sets given by the Hahn--Banach theorem.

For an arbitrary subset $A\subseteq G$, the set
$(A^\triangleright)^\triangleleft=:{\rm qc}(A)$ is the smallest
quasi--convex subset which contains $A$. It is called the {\bf
quasi--convex hull} of $A$.

Every locally quasi--convex Hausdorff group is maximally almost
periodic (i.e. the continuous characters separate the points).

\end{Remarks}

\begin{Theorem}
Every nuclear group is locally quasi--convex.
\end{Theorem}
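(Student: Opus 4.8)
The plan is to show directly that the quasi-convex neighbourhoods of $0$ form a neighbourhood basis. Given a neighbourhood $U$ of $0$, I would look for a neighbourhood $W$ whose quasi-convex hull ${\rm qc}(W)=(W^\triangleright)^\triangleleft$ still lies inside $U$; since $W\subseteq {\rm qc}(W)$, the set ${\rm qc}(W)$ is then a quasi-convex neighbourhood of $0$ contained in $U$, and as $U$ ranges over a basis this establishes local quasi-convexity. So the whole problem reduces to producing, from nuclearity, a neighbourhood $W$ with ${\rm qc}(W)\subseteq U$.

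To this end I would invoke the definition of nuclearity with, say, $m=2$ and a small constant $c$: there is a neighbourhood $W$ with $(d_k(W,U))\le(ck^{-2})$. Unwinding Notation \ref{not} (and using Remark \ref{surjektiv} to assume $\phi$ surjective) yields a vector space $E$, symmetric convex sets $X,Y\subseteq E$ with $d_k(X,Y)\le ck^{-2}$, a subgroup $H\le E$, and a surjection $\phi\colon H\to G$ such that $W\subseteq\phi(X\cap H)$ and $\phi(Y\cap H)\subseteq U$. Note that $\sum_k d_k(X,Y)\le c\pi^2/6$, which can be made as small as we wish by shrinking $c$. Since the quasi-convex hull is monotone, it suffices to prove ${\rm qc}(\phi(X\cap H))\subseteq\phi(Y\cap H)$.

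Arguing by contraposition, take $g\in G\setminus\phi(Y\cap H)$ and write $g=\phi(h)$ with $h\in H$; then the coset $h+\ker\phi$ misses $Y\cap H$, so in particular $h\notin Y$. The goal becomes the construction of a character $\psi\colon H\to\T$ that is trivial on $\ker\phi$, satisfies $\psi(X\cap H)\subseteq\T_+$, and has $\psi(h)\notin\T_+$: such a $\psi$ descends along $\phi$ to a character $\chi$ of $G$ with $\chi(W)\subseteq\chi(\phi(X\cap H))\subseteq\T_+$ and $\chi(g)\notin\T_+$, witnessing $g\notin{\rm qc}(W)$.

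The heart of the matter — and the step I expect to be the main obstacle — is this construction of $\psi$, together with the verification that the descended $\chi$ is continuous, so that indeed $\chi\in G^\wedge$. Here the decay of the Kolmogoroff numbers enters decisively. The estimate $d_k(X,Y)\le ck^{-2}$ lets me approximate $X$ by finite--dimensional sections modulo small multiples of $Y$, reducing the separation to a lattice situation in some $\R^n$, where a separating linear functional exists by Hahn--Banach. That functional defines a $\T$--valued character of the ambient space but not necessarily of the subgroup $H$; one must \emph{round} it to a genuine homomorphism on $H$ that preserves the constraint on $X\cap H$ while still separating $h$. The accumulated rounding error is controlled by $\sum_k d_k(X,Y)$, which we arranged to be small, so the correction converges and the constraint $\psi(X\cap H)\subseteq\T_+$ survives; the same quantitative control over a whole homothety family of neighbourhoods is what yields continuity of $\chi$. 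Granting this, ${\rm qc}(W)\subseteq\phi(Y\cap H)\subseteq U$, which completes the proof.
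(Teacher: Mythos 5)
Your outer reduction is sound, and it is in fact the skeleton of the standard argument: the paper itself gives no proof but simply cites Theorem (8.5) of \cite{Buch}, and Banaszczyk's proof of that result proceeds exactly as you begin — it suffices to find, inside each $U$, a neighbourhood $W$ with ${\rm qc}(W)\subseteq U$; unwinding Notation \ref{not} with $\phi$ surjective (Remark \ref{surjektiv}), this follows once every $g\notin\phi(Y\cap H)$ is separated from $\phi(X\cap H)$ by a character $\psi$ of $H$ that is trivial on $\ker\phi$. One side remark: continuity of the descended character $\chi$ is not the delicate point you make it out to be. Any abstract homomorphism $\chi:G\to\T$ with $\chi(W)\subseteq\T_+$ for some neighbourhood $W$ of $0$ is automatically continuous: if $V+\dots+V\subseteq W$ ($n$ summands), then $\chi(V)\subseteq\{e^{2\pi it}:\ |t|\le 1/(4n)\}$. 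No ``homothety family'' is needed.

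The genuine gap is that the construction of $\psi$ — which you yourself flag as the heart of the matter — is precisely the deep content of the theorem, and your sketch of it does not work as stated. Three concrete problems. First, the available rounding/extension machinery (Lemma \ref{2.14} together with Lemma \ref{8.1}, i.e. (2.14) and (8.1) of \cite{Buch}) operates with pre--Hilbert seminorms, not with general symmetric convex sets; its proofs rest on the geometry of ellipsoids and of lattices in Euclidean space and are not available for arbitrary convex bodies. Passing from $X,Y$ to pre--Hilbert balls via Lemma \ref{2.14} costs a factor $k^{2}$: from $d_k(X,Y)\le ck^{-m}$ one only gets $d_k(B_p,B_q)\le cc_mk^{-m+2}$. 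With your choice $m=2$ this bound is constant in $k$, so $\sum_k d_k(B_p,B_q)$ diverges and the hypothesis of Lemma \ref{8.1} fails; you need $m\ge 4$ at least (nuclearity supplies every $m$, so this is repairable, but your computation $\sum_k d_k(X,Y)\le c\pi^2/6$ bounds the wrong quantity). Second, your Hahn--Banach--plus--rounding sketch never addresses the constraint that $\psi$ vanish on $\ker\phi$, which is indispensable for descending to a character of $G$; a separating functional produced in a finite--dimensional section has no reason to kill $\ker\phi$, and this constraint cannot be imposed after the fact — it has to be built into the separation construction from the start (e.g.\ by first defining the character on $\ker\phi+\Z h$ and then extending it to $H$ with the quantitative control of Lemma \ref{8.1}). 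Third, the reduction ``to a lattice situation in some $\R^n$'' is unjustified: the point $h$ and the subgroup $\ker\phi$ are not confined to any finite--dimensional section, and controlling the infinite--dimensional tail is exactly where the summability of the Kolmogoroff numbers of ellipsoids is used. In short, your argument correctly reduces the theorem to its hard kernel and then asserts that kernel; as it stands it is not a proof, whereas the paper discharges the same kernel by citation to \cite{Buch}.
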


\begin{proof}
This is Theorem (8.5) in \cite{Buch}.
\end{proof}

 Now we repeat and generalize a representation of locally quasi--convex (nuclear)
 groups as   quotients of   subgroups of  locally convex (nuclear)
 vector groups given in (9.6) in \cite{Buch}, since we need a quantitative
 version here.

 Let $G$ be a locally quasi--convex
group. For  a nonempty  subset $A\subseteq G$ (not necessarily a
neighborhood), we put
$$X_A:=\{(x_\chi)\in\R^{G^\wedge}|\ \exists g\in A\ \mbox{such
that}\ e^{2\pi i x_\chi}=\chi(g)\ \mbox{and}\
  |x_\chi|\le 1/4\   \forall\chi\in A^\triangleright\},$$
$$Y_A:={\rm conv}(X_A),$$ and
$$H_0:=\{(x_\chi)_{\chi\in G^\wedge}\in \R^{G^\wedge}
|\ e^{2\pi i x_\chi}=\chi(g)\ \mbox{for some } g\in G \ \mbox{and
all}\ \chi\in G^\wedge\}.
$$
 Since the characters separate the points, the element $g\in G$ is
unique and hence $$\phi_0:H_0\rightarrow G,\ (x_\chi)\mapsto g$$ (if
$\chi(g)=e^{2\pi i x_\chi}$ for all $\chi\in G^\wedge$) is well
defined. It is easy to prove that $\phi_0$ is a group homomorphism.

For quasi--convex $0$--neighborhoods $U$ and $W$ such that
$U+U\subseteq W$ it is proved in (9.6) in \cite{Buch} that
$X_U+X_U\subseteq X_W$ and hence  $$ X_U\cap H_0+X_U\cap
H_0\subseteq X_W\cap H_0,\ $$ $$ {\rm conv}( X_U \cap H_0) +{\rm
conv}( X_U\cap H_0)\subseteq {\rm conv}(X_W\cap H_0),\ \mbox{and}\
Y_U+Y_U\subseteq Y_W.$$  This shows that $(Y_U)$ and $({\rm
conv}(X_U\cap H_0))$ (where $U$ runs through all quasi--convex
neighborhoods of $0$ in $G$) form neighborhood bases of locally
convex vector group topologies ${\cal O}$ and ${\cal O}_{co}$ on
$\R^{G^\wedge}$. (It is not difficult to verify that they are
Hausdorff spaces.) Instead of $(\R^{G^\wedge},{\cal O}_{co})$ we
simply write $V_0$.

 For a quasi--convex set $A$ we have \begin{eqnarray} Y_A\cap
H_0=X_A\cap H_0.\end{eqnarray} The inclusion "$\supseteq$" is
trivial. Conversely, fix $(x_\chi)\in Y_A\cap H_0$. Since
$(x_\chi)\in H_0$, there exists $g\in G$ such that $e^{2\pi i
x_\chi}=\chi(g)$ for all $\chi\in G^\wedge$. Since $(x_\chi)$
belongs to $Y_A\quad |x_\chi|\le 1/4$ for all $\chi\in
A^\triangleright$ and hence $\chi(g)\in\T_+$ for all $\chi\in
A^\triangleright$, which implies $g\in {\rm qc}(A)=A$ and hence
$(x_\chi)\in X_A\cap H_0$.


In     Lemma \ref{nucVG} we will   show that $G$ is a nuclear group
if and only if $V_0$ is a nuclear vector group, both endowed with
the topology induced by the neighborhood basis $(Y_U)$ and $({\rm
conv}(H_0\cap X_U))$. For its proof, we will need some lemmas.

 Let us first recall that a seminorm $p$ on a vector space $E$ is called {\bf
pre--Hilbert seminorm} if the parallelogram law
$p(x+y)^2+p(x-y)^2=2(p(x)^2+p(y)^2)$ holds for all $x,y\in E$.

\begin{Lemma}\label{2.14}
For every $m\ge 2$ there exists a constant $c_m>0$ such that for all
  symmetric and convex sets $X$ and $Y$  in a vector space which
  satisfy
  $d_k(X,Y)\le
ck^{-m}$ for some $m\ge 2$, there exist pre--Hilbert seminorms $p,q$
on $\langle X\rangle$ such that $X\subseteq B_p,\ \ B_q\subseteq Y$
and $d_k(B_p,B_q)\le c c_m k^{-m+2}$.
\end{Lemma}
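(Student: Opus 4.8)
The plan is to view the statement as a Hilbert-space sandwich of the pair $(X,Y)$ and to control the resulting Hilbert-to-Hilbert map. Write $V:=\langle X\rangle$ and let $p_X,p_Y$ be the Minkowski functionals of $X$ and of $Y$ (the latter restricted to $V$); since $d_1(X,Y)\le c<\infty$ we have $p_Y\le c\,p_X$ on $V$. Requiring $X\subseteq B_p$ is the same as requiring the pre-Hilbert seminorm $p$ to satisfy $p\le p_X$, and requiring $B_q\subseteq Y$ is the same as requiring $q\ge p_Y$. For such $p,q$ the identity $(V,p)\to(V,q)$ is an operator between pre-Hilbert spaces whose $d_k$-numbers are exactly $d_k(B_p,B_q)$, and monotonicity of $d_k(\cdot,\cdot)$ in its two arguments (immediate from the definition) gives at once $d_k(B_p,B_q)\ge d_k(X,Y)$. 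So the best one can hope for is to keep $d_k(B_p,B_q)$ within a controlled factor of $d_k(X,Y)\le ck^{-m}$; the whole task is to manufacture two pre-Hilbert seminorms squeezing $X$ and $Y$ from the outside so that the resulting Hilbert-to-Hilbert map obeys $d_k(B_p,B_q)\le c\,c_m k^{-m+2}$.

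To build $p$ and $q$ I would use the Kolmogoroff data at dyadic scales. For each $j\ge0$ the definition of $d_{2^j}(X,Y)$ furnishes a subspace $L_j\le V$ with
$$X\subseteq 2c\,2^{-jm}\,Y+L_j,\qquad \dim L_j<2^j,$$
so the $L_j$ record the directions in which $X$ protrudes beyond the shrinking bodies $2^{-jm}Y$. From the family $(L_j)$ I would assemble a single inner product on $V$ as a weighted orthogonal sum over the scales $j$, the weights tuned to the decay $2^{-jm}$, and take $q$ (arranged so that $q\ge p_Y$) and, symmetrically, $p$ (arranged so that $p\le p_X$) to be the associated seminorms. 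The parallelogram law then holds automatically, since each summand is a genuine inner-product seminorm and orthogonal sums of these remain pre-Hilbert.

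For the quantitative estimate I would not bound $d_k(B_p,B_q)$ directly but route through Corollary \ref{Cor}: $d_k(B_p,B_q)\le k^2\,l_k(B_p,B_q)$ with $l_k(B_p,B_q)=\sup\{d_k(B_p\cap N,B_q)\mid \dim N\le k\}$. This is exactly where the factor $k^2$, hence the shift of the exponent from $-m$ to $-m+2$, is paid for, via Theorem \ref{estimate}. It then remains to prove $l_k(B_p,B_q)\le c\,c_m' k^{-m}$: on a fixed $k$-dimensional $N$ the pre-Hilbert balls $B_p\cap N$ and $B_q$ are compared, up to dimension-free constants, with $X\cap N$ and $Y$, and the contributions of the various scales combine into a series over $j$ whose convergence requires $m\ge2$ and which produces the constant $c_m$. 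Multiplicativity and scaling, Lemma \ref{properties}(i),(iv), are the principal bookkeeping tools here.

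The main obstacle is the construction in the second step. A single global inner product cannot be the John ellipsoid of $X$ on every subspace simultaneously, and choosing the Löwner or John ellipsoid of $X$ (respectively $Y$) would lose a full power of the ambient dimension rather than a power of $k$; the point is to exploit the rapid dyadic decay of $d_k(X,Y)$ to spread the approximation across scales so that the only loss is the universal $k^2$ of Corollary \ref{Cor} times a dimension-free $c_m$. Verifying simultaneously that the assembled seminorms satisfy $p\le p_X$ and $q\ge p_Y$ and that the scale-by-scale estimates stay independent of $\dim V$ is the delicate part; once this is in place, the final inequality follows by summing the series and invoking Theorem \ref{estimate}.
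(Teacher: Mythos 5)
Your structural observations at the outset are correct (the translations $X\subseteq B_p\Leftrightarrow p\le p_X$ and $B_q\subseteq Y\Rightarrow q\ge p_Y$, the unavoidable lower bound $d_k(B_p,B_q)\ge d_k(X,Y)$, and the extraction of dyadic subspaces $L_j$ with $X\subseteq 2c\,2^{-jm}Y+L_j$, $\dim L_j<2^j$), but the two steps that carry the actual content of the lemma are missing or rest on claims that are false. Note first that the paper itself gives no proof here: it quotes Lemma (2.14) of \cite{Buch} and refers to (18.32)--(18.33) of \cite{Diss}, so the comparison is with that external argument. The first genuine gap is that your construction of $p$ and $q$ is never performed: the $L_j$ are bare subspaces, equipped with no distinguished complements and no inner products on any pieces, so ``a weighted orthogonal sum over the scales'' is not yet a definition of anything; producing such a decomposition, with bounds independent of $\dim V$, \emph{is} the lemma. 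Moreover, the tool you explicitly dismiss is the one that makes this work: John's theorem for a symmetric convex body in dimension $n$ costs $\sqrt{n}$, not a full power of $n$, and the workable strategy (the one underlying the cited proofs) applies such ellipsoid comparisons block-by-block along the dyadic filtration, where the block dimension is about $2^j$ and the loss $\sqrt{2^j}$ is absorbed by the decay $2^{-jm}$; that absorption is precisely where the shift from $k^{-m}$ to $k^{-m+2}$ is paid.

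The second gap is structural: the detour through Corollary \ref{Cor} buys nothing. If $p$ and $q$ are pre--Hilbert seminorms with $d_1(B_p,B_q)<\infty$, then (after quotienting by the kernels and completing, via Remark \ref{Remark}) the identity $(V,p)\to(V,q)$ is an operator $T$ between Hilbert spaces, and for such operators $l_k=d_k$: one inequality is Theorem \ref{estimate}, and for the other, spectral theory of $T^*T$ yields a $k$--dimensional subspace $N$ with $\|Tx\|\ge (d_k(T)-\eps)\|x\|$ on $N$, whence $d_k(T|_N)\ge d_k(T)-\eps$. Consequently your ``remaining task'' $l_k(B_p,B_q)\le c\,c_m'k^{-m}$ is \emph{equivalent} to $d_k(B_p,B_q)\le c\,c_m'k^{-m}$, a lossless bound strictly stronger than the lemma itself; the factor $k^2$ you budgeted through Corollary \ref{Cor} is simply wasted. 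And the way you propose to establish that bound fails: since $X\subseteq B_p$, one only has $B_p\cap N\supseteq X\cap N$ --- the wrong direction --- and there is no reverse inclusion with constants independent of the ambient dimension (sections of a global pre--Hilbert ball containing $X$ can be far larger than the corresponding sections of $X$, and even within a fixed $k$--dimensional space the generic comparison of a symmetric convex body with an ellipsoid costs $\sqrt{k}$). So the exponent loss must be incurred inside the construction of $p$ and $q$, not through an $l_k$--versus--$d_k$ inequality; as written, the proposal defers exactly the part of the argument that makes the lemma true.
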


\begin{proof}
This is Lemma (2.14) in \cite{Buch}. A proof can be found in
\cite{Diss}, (18.32) and (18.33).
\end{proof}

\begin{Lemma}\label{2.15}
Let $p$ and $q$ be pre--Hilbert seminorms on a vector space $E$ such
that $d_k(B_p,B_q)\rightarrow 0$ and $d_1(B_p,B_q)<\infty$. For
decreasing sequences of positive real numbers $(a_k)$ and $(b_k)$
with $d_k(B_p,B_q)\le a_kb_k$ there exists a pre--Hilbert seminorm
$r$ on $E$ such that $d_k(B_p,B_r)\le a_k$ and $d_k(B_r,B_q)\le b_k$
for all $k\in\N$.
\end{Lemma}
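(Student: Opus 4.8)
The plan is to reduce everything to a single diagonal operator between Hilbert spaces and then to split the diagonal; the whole difficulty migrates into setting up the diagonalization, after which the splitting is almost forced. First I would pass to the Hilbert space picture. Since $d_1(B_p,B_q)<\infty$ forces $\ker p\subseteq\ker q$ and $q\le d_1(B_p,B_q)\,p$, the seminorm $q$ descends to $E/\ker p$; completing $(E/\ker p,p)$ to a Hilbert space $H$, the positive form associated with $q$ is represented by a bounded positive self-adjoint operator $A$ on $H$ with $\|y\|_q^2=\langle Ay,y\rangle_p$. The hypothesis $d_k(B_p,B_q)\to 0$ says exactly that the formal identity $T:(E,p)\to(E,q)$ is compact (for operators between Hilbert spaces the Kolmogorov numbers are the singular values), so $A=T^{*}T$ is compact and positive. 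By the spectral theorem there is an orthonormal system $(e_i)$ of eigenvectors with eigenvalues $\lambda_1\ge\lambda_2\ge\cdots>0$, completed by an orthonormal basis of $\ker A$, and with $\sigma_i:=\sqrt{\lambda_i}=d_i(B_p,B_q)$ one gets the diagonal model $p(e_i)=1$, $q(e_i)=\sigma_i$. On $\ker A$ both $q$ and the seminorm I am going to build vanish, so by Lemma \ref{properties}(iii) those directions can be discarded and I work only with indices $i$ for which $\sigma_i>0$.

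Next I would define $r$ on the eigendirections by $r(e_i):=\rho_i$ with $\rho_i:=\sigma_i/b_i$, letting $r$ vanish on $\ker A$; concretely $r(x)^2:=\sum_i\rho_i^2|\langle\bar x,e_i\rangle_p|^2$, where $\bar x$ is the image of $x$ in $H$. Because $\rho_i=\sigma_i/b_i\le a_i\le a_1$ by hypothesis, this sum defines a genuine pre--Hilbert seminorm on $E$ with $r\le a_1 p$. In the diagonal model the formal identities $(E,p)\to(E,r)$ and $(E,r)\to(E,q)$ are the diagonal operators with entries $\rho_i$ and $\sigma_i/\rho_i$, so by the standard computation of the Kolmogorov numbers of a diagonal operator between Hilbert spaces, together with Remark \ref{Remark}, one has $d_k(B_p,B_r)=\rho_k^{*}$ and $d_k(B_r,B_q)=(\sigma_\cdot/\rho_\cdot)_k^{*}$, the $k$-th largest terms of the respective sequences.

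The two required estimates now follow. Since $\sigma_i/\rho_i=b_i$ and $(b_i)$ is already decreasing, $d_k(B_r,B_q)\le b_k$, with equality whenever infinitely many $\sigma_i$ are positive. For the other estimate I claim the $k$-th largest value of $(\rho_i)=(\sigma_i/b_i)$ is at most $a_k$, and this is the one genuinely clever point: it is a counting argument. If $\rho_i>a_k$ then, using $\sigma_i\le a_i b_i$, we get $a_i\ge\sigma_i/b_i=\rho_i>a_k$, hence $a_i>a_k$; as $(a_i)$ is non--increasing this forces $i<k$. Thus at most $k-1$ of the $\rho_i$ exceed $a_k$, so the $k$-th largest among them is $\le a_k$, i.e. $d_k(B_p,B_r)\le a_k$.

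I expect the main obstacle to lie not in this final counting step but in the reduction at the start: one must make the diagonalization rigorous despite the kernels of $p$ and $q$, and justify that the compactness encoded in $d_k(B_p,B_q)\to 0$ genuinely yields a discrete spectrum, so that the two formal identities really are diagonal and their Kolmogorov numbers are given by decreasing rearrangements. Once the diagonal model is secured, the choice $\rho_i=\sigma_i/b_i$ turns one of the two bounds into an equality and reduces the other to the elementary monotonicity argument above. A completely symmetric construction with $\rho_i:=a_i$ would instead make $d_k(B_p,B_r)=a_k$ exact and bound $d_k(B_r,B_q)\le b_k$ by the identical counting, so either assignment proves the lemma.
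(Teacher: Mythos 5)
Your proposal is a genuine proof, whereas the paper itself offers none: Lemma~\ref{2.15} is quoted verbatim from Banaszczyk's book (Lemma (2.15) in \cite{Buch}) and the proof is deferred to (18.28) in \cite{Diss}. What you have written is essentially a reconstruction of that standard argument: complete $(E/\ker p,p)$ to a Hilbert space $H$ (legitimate, since $d_1(B_p,B_q)<\infty$ gives $q\le d_1(B_p,B_q)\,p$ and hence $\ker p\subseteq\ker q$), represent $q$ by a positive operator $A$ which is compact because $d_k(B_p,B_q)\to 0$, diagonalize, split the diagonal by $\rho_i:=\sigma_i/b_i$, and finish with the counting estimate $\rho_i\le a_i\Rightarrow\#\{i:\rho_i>a_k\}\le k-1$, which is correct and is indeed the crux. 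One caveat for a write-up: the eigenvectors $e_i$ live in $H$, not in $E$, so the asserted \emph{equalities} $\sigma_i=d_i(B_p,B_q)$, $d_k(B_p,B_r)=\rho_k^{*}$, $d_k(B_r,B_q)=b_k^{*}$ require comparing Kolmogorov numbers computed in $E$ with those computed in the completions in both directions, and pulling the optimal finite--dimensional subspaces $\langle e_i\rangle$ back into $E$ needs a density/perturbation argument. Fortunately your argument only uses the inequalities in the favourable directions: $\sigma_i\le d_i(B_p,B_q)\le a_ib_i$ holds because passing to completions does not increase the diameters (if $B_p\subseteq cB_q+L$ in $E$, the closure of $cU+L$ lies in $(c+\eps)U+L$), while $d_k(B_p,B_r)\le a_k$ and $d_k(B_r,B_q)\le b_k$ follow by approximating $\mathrm{span}\{e_i:\rho_i>a_k\}$, resp.\ $\mathrm{span}\{e_1,\dots,e_{k-1}\}$, by vectors of $E$; the perturbation is controlled since the relevant coefficients $|\langle \bar x,e_i\rangle|$ are uniformly bounded on $B_p$, resp.\ on $B_r$ (by $1/\rho_i$), so the infimum defining $d_k$ absorbs the error. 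With the equalities weakened to these inequalities and with the kernel directions discarded via \ref{properties}(iii), as you indicate, the proof is complete and matches the cited one in spirit.
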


\begin{proof}
This is Lemma (2.15) in \cite{Buch}. A proof can be found in (18.28)
in \cite{Diss}.
\end{proof}

\begin{Lemma}\label{8.1}
Let $r$ and $q$ be pre--Hilbert seminorms on a vector space $E$
satisfying $\sum_{k\in\N}d_k(B_r,B_q)<\infty$ and let
$\chi:H\rightarrow \T$ be a homomorphism where $H$ is a subgroup of
$E$. If $\chi(B_q\cap H)\subseteq \T_+$ then there exists a linear
mapping $f:E\rightarrow \R$ such that $e^{2\pi i f}|_{H}=\chi$ and
$$\sup\{|f(x)|\ |\  x\in B_r\}\le
\frac{21}{2\pi}\sum_{k\in\N}d_k(B_r,B_q).$$
\end{Lemma}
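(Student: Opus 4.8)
The plan is to trivialise the geometry by diagonalising the nuclear inclusion, reducing the lemma to a concrete character–extension problem in a Hilbert space, and then to build the functional by finite–dimensional approximation. First I would pass to Hilbert completions. Since $r,q$ are pre-Hilbert seminorms and $\sum_k d_k(B_r,B_q)<\infty$, the identity induces a bounded operator $T\colon E_r\to E_q$ between the associated Hilbert spaces whose Kolmogoroff numbers are $d_k(B_r,B_q)$ (Remark \ref{Remark}); for Hilbert–space operators these coincide with the singular numbers, so $T$ is compact and nuclear with $\sum_k s_k(T)=\sum_k d_k(B_r,B_q)<\infty$. Applying the spectral theorem to $T^*T$ yields an $r$-orthonormal sequence $(u_n)$ (on the $q$-null directions $\chi$ is trivial, by the usual argument, and may be ignored) and numbers $s_n:=d_n(B_r,B_q)>0$ so that, with $\xi_n(x):=\langle x,u_n\rangle_r$, one has $r(x)^2=\sum_n\xi_n(x)^2$ and $q(x)^2=\sum_n s_n^2\,\xi_n(x)^2$. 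Thus $B_r$ is the standard ball and $B_q$ the ellipsoid with semi-axes $1/s_n$, every $r$-continuous functional is $f=\sum_n\alpha_n\xi_n$ with $\sup_{B_r}|f|=\|\alpha\|_2$, and the lemma becomes: find $\alpha\in\ell^2$ with $e^{2\pi i\sum_n\alpha_n\xi_n(h)}=\chi(h)$ for all $h\in H$ and $\|\alpha\|_2\le\tfrac{21}{2\pi}\sum_n s_n$.

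It then suffices to construct an additive lift $\beta\colon H\to\R$ with $e^{2\pi i\beta}=\chi$ and $|\beta(h)|\le C\,r(h)$, where $C=\tfrac{21}{2\pi}\sum_n s_n$: extending $\beta$ first $\Q$-linearly over the rational span of $H$ and then, using its $r$-continuity, $\R$-linearly, one invokes Hahn–Banach against the sublinear functional $C\,r$ to obtain $f$ (equivalently $\alpha$, with $\sup_{B_r}|f|\le C$). The sole place the hypothesis enters is a quantitative continuity of the argument: if $h\in H$ and $q(h)\le1$, then $mh\in B_q\cap H$ for every integer $m\le 1/q(h)$, so $\chi(h),\chi(h)^2,\dots,\chi(h)^{\lfloor 1/q(h)\rfloor}$ all lie in $\T_+$; iterating "if $z,z^2\in\T_+$ then $|\arg z|$ halves'' shows that the principal value $\psi(h):=\tfrac{1}{2\pi}\arg\chi(h)$ obeys $|\psi(h)|\le c_0\,q(h)$ (one may take $c_0=\tfrac12$ for $q(h)\le\tfrac12$). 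So $\psi$ is $q$-Lipschitz at $0$.

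The heart of the matter is to manufacture $\beta$ from nuclearity, and here one cannot merely extend $\psi$ from the $q$-closure of $H$ to all of $E_q$: a continuous character of a closed subgroup of a Hilbert space need not extend, and it is exactly the nuclear shape of $B_q$, i.e.\ the summability $\sum_n s_n<\infty$, that must be used. I would proceed by finite–dimensional approximation. Let $P_N$ be the $r$-orthogonal projection onto $\langle u_1,\dots,u_N\rangle$ and transport $H$ to the subgroup $K_N:=\xi(P_N H)\le\R^N$. Using the structure theorem for closed subgroups of $\R^N$, write $\overline{K_N}=V\oplus\Lambda$ as a subspace plus a lattice: on $V$ the $q$-Lipschitz bound pins down a unique linear lift of $\psi$ whose $r$-dual norm is controlled by $c_0 s_1$, while on $\Lambda$ one chooses integral lifts and bounds each generator's contribution by the support function of the ellipsoid, i.e.\ essentially by the relevant $s_n$. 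Assembling these, and using $\|(s_n)\|_2\le\|(s_n)\|_1=\sum_n s_n$, gives $\alpha_N\in\R^N$ with $\|\alpha_N\|_2\le C\sum_{n\le N}s_n$ lifting $\chi$ on $P_N H$. Since the $\alpha_N$ lie in a fixed $\ell^2$-ball, a weakly convergent subnet has a limit $\alpha$ with $\|\alpha\|_2\le C\sum_n s_n$, and a diagonal argument over a countable generating set of $H$ forces $e^{2\pi i\langle\alpha,\xi(h)\rangle}=\chi(h)$ for every $h$.

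I expect the genuine obstacle to be the last two steps together: organising the finite–dimensional lifts so that the integral (lattice) choices remain mutually compatible as $N$ grows, so that the weak limit really lifts $\chi$ on all of $H$ and not merely on the truncations, while simultaneously keeping the norms summable. This is precisely where nuclearity is indispensable and where the loss from the near-optimal $\tfrac14\|(s_n)\|_2$ to the stated $\tfrac{21}{2\pi}\sum_n s_n$ originates. An alternative that may streamline the bookkeeping is to avoid coordinate truncation altogether and instead split $q$ into a summable chain of pre-Hilbert seminorms by repeated application of Lemma \ref{2.15}, lift $\chi$ block by block, and add the increments, the geometric overlap of the blocks then accounting for the explicit constant.
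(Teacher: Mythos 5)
Note first that the paper itself does not prove this lemma: it is quoted from Banaszczyk's Lecture Notes (Lemma (8.1) in \cite{Buch}), with the sharper constant $\frac{21}{2\pi}\sum_{k}d_k(B_r,B_q)$ taken from (19.14)(ii) in \cite{Diss}. So you are attempting to reprove one of the deepest results of that theory, and your sketch does not close it. Your preliminary reductions are sound but essentially cost-free: diagonalizing the inclusion $(E,r)\to(E,q)$ to put $B_r$, $B_q$ in ellipsoid form, observing that it suffices to produce an additive $\beta:H\to\R$ with $e^{2\pi i\beta}=\chi$ and $|\beta|\le C\,r$ on $H$ (Hahn--Banach then yields $f$), and the halving estimate $|\psi(h)|\le c_0\,q(h)$. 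All of the content of the lemma lies in constructing $\beta$, and that is exactly what is missing.

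Two concrete failures. First, your truncation scheme is ill-posed: $\chi$ does not induce a character on $K_N=\xi(P_NH)$. The projection $P_N$ does not map $H$ into itself, and two elements $h,h'\in H$ with $P_Nh=P_Nh'$, i.e.\ $h-h'\in H\cap\ker P_N$, need not satisfy $\chi(h)=\chi(h')$, because nothing in the hypothesis forces $\chi$ to be trivial on $H\cap\ker P_N$ (take $H=\Z v$ with $v\in\ker P_N$, $q(v)=1$, $\chi(v)=i$: the hypothesis $\chi(B_q\cap H)\subseteq\T_+$ holds, yet $\chi$ is nontrivial on the kernel). Hence there is no ``$\psi$ on $K_N$'' to lift, and downstream of this the weak limit cannot work: your $\alpha_N$ is supported in the first $N$ coordinates, so $\langle\alpha_N,\xi(h)\rangle$ only sees $P_Nh$, which carries no information about $\chi(h)$; a weak limit of the $\alpha_N$ therefore has no reason to satisfy $e^{2\pi i\langle\alpha,\xi(h)\rangle}=\chi(h)$ on $H$. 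The correct exhaustion is by finitely generated subgroups $H'\le H$ together with their finite-dimensional spans, where $\chi|_{H'}$ is a genuine restriction, followed by a Tychonoff compactness argument over this directed family; note also that $H$ need not be countably generated, so your concluding ``diagonal argument over a countable generating set'' is not available in the stated generality. Second, even granting a character on a closed subgroup $V\oplus\Lambda$ of $\R^N$, your treatment of the lattice part is precisely the hard step and the proposed bound fails: choosing integral lifts generator by generator and bounding ``each generator's contribution by the support function of the ellipsoid'' does not control the norm of the resulting functional, because the dual basis of a lattice basis can be arbitrarily badly conditioned, so the individual contributions do not sum to anything like $C\sum_n s_n$. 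The genuine finite-dimensional statement (characters of subgroups of $\R^n$ that are bounded on an ellipsoid extend to linear functionals with norm controlled by the sum of the Kolmogoroff diameters) is the content of Chapter 3 of \cite{Buch}, proved by a delicate induction on the dimension using the geometry of numbers, and the constant $\frac{21}{2\pi}$ emerges from that induction. As you partly concede in your last paragraph, your proposal leaves this core untouched; it is a reduction plus an acknowledgement, not a proof.
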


\begin{proof}
This is (8.1) in \cite{Buch}. (The proof of the sharper estimate can
be found in (19.14) (ii) in \cite{Diss}.)
\end{proof}

\begin{Lemma}\label{3.20}
Let $p$ and $q$ be pre--Hilbert seminorms on a vector space $E$ with
$\sum_{k\in\N}d_k(B_p,B_q)^2<\frac{1}{4}$. Then for any subgroup $H$
of $E$ the following estimate holds:
$$d_k({\rm conv}(H\cap B_p),{\rm conv}(H\cap B_q))\le 2
d_k(B_p,B_q).$$
\end{Lemma}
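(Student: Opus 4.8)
The plan is to exploit that $p$ and $q$, being pre--Hilbert seminorms, can be simultaneously diagonalised, and then to realise the required approximation by taking integer multiples of lattice vectors inside the (much larger) ball $B_q$. First I would reduce to the case that $p,q$ are norms by passing to $\langle B_p\rangle$ and factoring out kernels, and invoke Remark~\ref{Remark} to read $a_k:=d_k(B_p,B_q)$ as the Kolmogoroff numbers of the formal identity $T\colon(\langle B_p\rangle,p)\to(\langle B_q\rangle,q)$. Since $\sum_k a_k^2<1/4$, $T$ is Hilbert--Schmidt, hence compact, and the two inner products admit a common orthogonal basis $(e_i)$ with $p(e_i)=1$ and $q(e_i)=a_i$, the sequence $(a_i)$ being non--increasing; thus $p(\sum c_ie_i)^2=\sum c_i^2$ and $q(\sum c_ie_i)^2=\sum a_i^2c_i^2$. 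In particular $a_1^2\le\sum_k a_k^2<1/4$, so $a_1<1/2$, $q\le a_1\,p$, and $B_p\subseteq a_1B_q\subseteq\tfrac12 B_q$.

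Next I would fix $k$, put $L:=\langle e_1,\dots,e_{k-1}\rangle$ (so ${\rm dim}\,L=k-1<k$) and let $\pi_M$ denote the projection onto $M:=\langle e_i:i\ge k\rangle$ along $L$. Because $L$ is a subspace and both hulls are convex, it suffices to prove for each single $x\in H\cap B_p$ that $x\in 2a_k\,{\rm conv}(H\cap B_q)+L$. Writing $x=x_L+x_M$ with $x_L\in L$, $x_M\in M$, one checks $q(x_M)\le a_k\,p(x_M)\le a_k$, so the point $x_M/(2a_k)$ has $q$--norm $\le 1/2$; and since any $L$--component may be absorbed into the $L$--summand, the claim reduces to
\[
 \frac{x_M}{2a_k}\in\pi_M\bigl({\rm conv}(H\cap B_q)\bigr)={\rm conv}\bigl(\pi_M(H\cap B_q)\bigr).
\]

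For $k=1$ this is immediate and already exhibits the mechanism: then $L=\{0\}$, $x=x_M$ and $q(x)\le a_1<1/2$, so every integer multiple $mx$ with $|m|\le\lfloor 1/q(x)\rfloor$ lies in $H\cap B_q$; since $\lfloor 1/q(x)\rfloor\ge 1/a_1-1\ge 1/(2a_1)$, the convex hull of these multiples already contains $x/(2a_1)$. For general $k$ I would run the same idea in the directions of $M$, using a $q$--reduced $\Z$--basis of the group generated by the (finitely many relevant) short vectors of $H\cap B_p$, so that these lattice vectors are $q$--short and enough of their integer combinations stay inside $B_q$ to reach $x_M/(2a_k)$ modulo $L$.

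The hard part is precisely this last step. A short vector $x$ is in general skew, i.e.\ $x_L\ne 0$, and then its own multiples leave $B_q$ (because of the amplified $L$--component) long before their $M$--projection reaches $x_M/(2a_k)$, so one must correct the $L$--part using \emph{other} elements of $H$. Concretely, I expect the crux to be a lattice--covering estimate of the form $\pi_M\bigl({\rm conv}(H\cap B_q)\bigr)\supseteq\tfrac12(B_q\cap M)$, and it is here --- rather than in the base case --- that the full hypothesis $\sum_k a_k^2<1/4$ is needed (not merely $a_1<1/2$), in order to keep the accumulated $q$--covering radius of $\pi_M(H)$ in the $M$--directions below $1/2$. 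Once this covering statement is in place, the target $x_M/(2a_k)$, of $q$--norm $\le 1/2$, lies in the projected hull, the factor $2$ is accounted for, and the asserted inclusion --- hence $d_k({\rm conv}(H\cap B_p),{\rm conv}(H\cap B_q))\le 2a_k$ --- follows.
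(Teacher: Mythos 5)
You have not proved the lemma; you have reduced it to a statement that is false. The setup is fine: the simultaneous diagonalisation of $p$ and $q$ (modulo passing to completions and factoring out kernels), the choice $L=\langle e_1,\dots,e_{k-1}\rangle$, the reduction to showing $x_M/(2a_k)\in\pi_M\bigl({\rm conv}(H\cap B_q)\bigr)$ for each single $x\in H\cap B_p$, and the $k=1$ case via integer multiples are all correct. But everything then hinges on the ``covering estimate'' $\pi_M\bigl({\rm conv}(H\cap B_q)\bigr)\supseteq\tfrac12(B_q\cap M)$, which you explicitly leave unproved, and which cannot be proved: the hypothesis $\sum_k d_k(B_p,B_q)^2<\tfrac14$ concerns only the seminorms, while the lemma is asserted for \emph{every} subgroup $H$. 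Taking $H=\{0\}$, or $H=\Z e_1\subseteq L$ (with $k\ge 2$), the left--hand side of your covering claim is $\{0\}$ while the right--hand side contains nonzero points of $M$; for such $H$ the lemma itself holds trivially, so your claim is strictly stronger than the lemma and false. Any correct intermediate statement must be relativised to $H$: one needs that the particular point $x_M/(2a_k)$ --- a large multiple of the projection of a lattice point --- is reachable by convex combinations of projections of points of $H\cap B_q$, i.e.\ one must correct the skew $L$--components of elements of $H$ by \emph{other} elements of $H$, which is exactly the lifting problem you acknowledge but do not solve.

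That correction step is the entire content of the result, and note that the paper does not reprove it either: its proof of Lemma~\ref{3.20} consists of the citation ``This is Corollary (3.20) in \cite{Buch}.'' In Banaszczyk's book that corollary sits at the end of a substantial chain of lattice lemmas (successive minima, covering radii, induction on dimension), in which the pre--Hilbert structure is used to make errors committed in orthogonal directions accumulate in quadrature --- this is precisely where the square--summability $\sum_k d_k(B_p,B_q)^2<\tfrac14$ is consumed. Your sketch (``a $q$--reduced $\Z$--basis of the group generated by the short vectors'') gestures at this but supplies no mechanism, and your base case uses only $d_1<\tfrac12$. So what you have is a correct opening reduction together with an acknowledged --- and, as formulated, false --- conjecture standing in for the hard part; as a proof of Lemma~\ref{3.20} it has an essential gap.
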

\begin{proof}
This is Corollary (3.20) in \cite{Buch}.
\end{proof}

\begin{Lemma}\label{general}
Let $A$ be an arbitrary nonempty subset   and let $B\supseteq A$ be a
quasi--convex subset of an abelian topological group $G$   such that
 $(d_k(A,B))\le (ck^{-m})$. If $(8\pi c
c_m)^2\sum_{k\in\N}(k^{-m+4})^2<1/4$, then
$$d_k(Y_A,Y_B)\le 16\pi c
c_mk^{-m+4}.$$ If  $(16 \pi cc_m
c_{m-4})^2\sum_{k=1}^\infty (k^{- m+6})^2<1/4$ 
then
$$d_k({\rm conv}(X_A\cap H_0),{\rm conv}(X_B\cap H_0))\le 32\pi c c_m
c_{m-4}k^{-m+6}.$$
 \end{Lemma}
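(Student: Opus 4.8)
The plan is to translate the hypothesis $(d_k(A,B))\le (ck^{-m})$ from the group $G$ into the auxiliary vector space, feed the resulting estimate through the chain of pre--Hilbert lemmas (\ref{2.14}, \ref{2.15}, \ref{8.1}), and then compare the images $Y_A,Y_B$ (resp. $\mathrm{conv}(X_A\cap H_0),\mathrm{conv}(X_B\cap H_0)$) by realizing both as convex hulls of intersections of a subgroup with suitable balls, so that \ref{3.20} applies. More precisely, by \ref{not} and \ref{surjektiv} there is a vector space $E$, a surjective homomorphism $\varphi:H\rightarrow G$ with $H\le E$, and symmetric convex sets $X\supseteq$ (a preimage of $A$) and $Y$ with $\varphi(Y\cap H)\subseteq B$ and $d_k(X,Y)\le ck^{-m}$. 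Applying \ref{2.14} gives pre--Hilbert seminorms $p,q$ on $\langle X\rangle$ with $X\subseteq B_p$, $B_q\subseteq Y$, and $d_k(B_p,B_q)\le cc_mk^{-m+2}$.

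First I would split this estimate multiplicatively using \ref{2.15}. Writing $d_k(B_p,B_q)\le a_kb_k$ with a symmetric factorization (each factor of order $k^{-(m-2)/2}$, or more usefully $a_k\sim k^{-(m-4)}$ and $b_k\sim k^{-2}$ so that $\sum a_k^2<\infty$ is controlled), \ref{2.15} produces an intermediate pre--Hilbert seminorm $r$ with $d_k(B_p,B_r)\le a_k$ and $d_k(B_r,B_q)\le b_k$. The point of inserting $r$ is to arrange that the ``outer'' comparison $B_r$ to $B_q$ has summable Kolmogoroff numbers, which is exactly the hypothesis of \ref{8.1}: this lets me extend any character $\chi$ that is nonnegative on $B_q\cap H$ to a genuine linear functional $f$ on $E$ with $e^{2\pi i f}|_H=\chi$ and with $\sup_{B_r}|f|$ controlled by $\tfrac{21}{2\pi}\sum_k d_k(B_r,B_q)$. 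This extension step is what converts the quasi--convex condition defining $X_A,X_B$ (a condition about characters being nonnegative) into a condition about real linear functionals, i.e. into membership in sets of the shape $\mathrm{conv}(H_0\cap B_{p'})$ for appropriate seminorms $p'$. It is here that the exponents $-m+4$ and $-m+6$ in the two conclusions arise: the single loss of two in \ref{2.14} accounts for the first, and the double application (once to manufacture $r$ with summable numbers via \ref{8.1}, once more on the resulting seminorm) accounts for the extra two in the second.

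Once both $Y_A,Y_B$ and $\mathrm{conv}(X_A\cap H_0),\mathrm{conv}(X_B\cap H_0)$ have been sandwiched between $\mathrm{conv}(H\cap B_{p'})$ and $\mathrm{conv}(H\cap B_{q'})$ for seminorms with $d_k(B_{p'},B_{q'})\le (\text{const})\,k^{-m+4}$ (resp.\ $k^{-m+6}$), I would invoke \ref{3.20} directly. Its hypothesis $\sum_k d_k(B_{p'},B_{q'})^2<\tfrac14$ is precisely the smallness assumption stated in the lemma, namely $(8\pi cc_m)^2\sum_k(k^{-m+4})^2<\tfrac14$ in the first case and the analogous inequality with $cc_mc_{m-4}$ in the second; this is why those numerical side conditions appear in the statement rather than being absorbed into the conclusion. \ref{3.20} then yields $d_k$ of the convex hulls bounded by twice $d_k(B_{p'},B_{q'})$, and tracking the constants ($16\pi cc_m$ and $32\pi cc_mc_{m-4}$ respectively) gives the two asserted inequalities. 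Throughout I would use \ref{properties}(iii)--(iv) freely to pass between the sets $X$, $\mathrm{conv}$ of intersections, and the balls, and to absorb scalar factors.

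\emph{The main obstacle} I anticipate is not any single lemma but the careful bookkeeping needed to verify the two containments $Y_A\subseteq \mathrm{conv}(H\cap B_p)$ and $\mathrm{conv}(H\cap B_q)\subseteq Y_B$ (and their $X$--analogues via equation (5.1), $Y_A\cap H_0=X_A\cap H_0$ for quasi--convex $A$, together with $B$'s quasi--convexity). The delicate direction is showing that a point of $Y_B$ lying in the appropriate ball forces the defining character inequality $|x_\chi|\le 1/4$ on $B^\triangleright$; this is exactly where the extension lemma \ref{8.1} must be applied to each $\chi\in B^\triangleright$, and where the constant $21/(2\pi)$ must be kept below $1/4$ after summation — which is the quantitative reason the summability hypotheses are imposed. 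Managing the two seminorms $p,q,r$ simultaneously, and ensuring the factorization chosen in \ref{2.15} makes both the \ref{8.1}-sum finite and the \ref{3.20}-sum smaller than $1/4$, is the genuinely fiddly part; the rest is routine manipulation of Kolmogoroff numbers.
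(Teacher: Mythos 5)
Your proposal is correct and follows the paper's own proof essentially step for step: the representation from \ref{not}/\ref{surjektiv}, then \ref{2.14}, then \ref{2.15} with the same factorization $k^{-m+2}=k^{-m+4}\cdot k^{-2}$ (reserving the summable factor for \ref{8.1}, whose resulting bound $\tfrac{21}{2\pi}\cdot\tfrac{1}{8\pi}\cdot\tfrac{\pi^2}{6}<\tfrac14$ is what makes quasi--convexity of $B$ usable), the sandwich inclusions, \ref{3.20} (whose hypothesis $\sum_k d_k^2<\tfrac14$ is exactly the stated smallness conditions), and for the second estimate a further application of \ref{2.14} (whence the factor $c_{m-4}$) together with the identity $X_B\cap H_0=Y_B\cap H_0$. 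The bookkeeping you defer is precisely what the paper's proof carries out: assembling the functionals $f_\chi$ into a map $\Phi:\langle X\rangle\rightarrow\R^{G^\wedge}$ and composing with the coordinate projection $P$ onto $\R^{B^\triangleright}$, then using $Y_B=P^{-1}(P(Y_B))$ and \ref{properties}(v) to bridge the two ambient spaces, since $Y_A,Y_B$ and the balls $B_p,B_r$ do not live in the same vector space.
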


\begin{proof}
Observe that $m\ge 5$.

 By assumption, there is a vector space $E$, a
subgroup $H$ of $E$ and a homomorphism $\phi:H\rightarrow G$ and
there are symmetric and convex sets $X$ and $Y$ such that
\begin{eqnarray} d_k(X,Y)\le ck^{-m},\quad A\subseteq\phi(X\cap
H),\quad\mbox{and}\quad \phi(Y\cap H)\subseteq B. \end{eqnarray}

According to \ref{2.14}, there are pre--Hilbert seminorms $p$ and
$q$ defined on $\langle X\rangle $ such that \begin{eqnarray}
d_k(B_p,B_q)\le cc_m k^{-m+2}\quad\mbox{and}\quad  X\subseteq B_p,\
\ B_q\subseteq Y .\end{eqnarray}  As a consequence of \ref{2.15},
there exists a pre--Hilbert seminorm $r$ such that
\begin{eqnarray}d_k(B_p,B_r)\le 8 \pi  c c_m k^{-m+4}\quad\mbox{and}\quad
d_k(B_r,B_q)\le \frac{1}{8\pi}k^{-2}.\end{eqnarray}

Let us fix $\chi\in B^\triangleright$. Then $\chi\circ\phi(B_q\cap
H)\stackrel{(6)}{\subseteq} \chi(\phi(Y\cap
H))\stackrel{(5)}{\subseteq} \chi(B)\subseteq \T_+$. According to
\ref{8.1}, there is a linear function $\dis  f_\chi:\langle
X\rangle\rightarrow \R$ such that  $$  e^{2\pi i
f_\chi}|_{H}=\chi\circ \phi\quad\mbox{and}$$ \begin{eqnarray}
\sup\{|f_\chi(x)|\;|x\in B_r\}\le
\frac{21}{2\pi}\sum_{k\in\N}d_k(B_r,B_q)=
\frac{21}{2\pi}\cdot\frac{1}{8\pi}\cdot\frac{\pi^2}{6}<\frac{1}{4}.\end{eqnarray}

For $\chi\notin B^\triangleright$ we put $f_\chi\equiv 0$. We define
$$\Phi:\langle X\rangle\rightarrow V_0=\R^{G^\wedge},\ x\mapsto
(f_\chi(x))_{\chi\in G^\wedge}$$ and let $$P:\R^{G^\wedge}\rightarrow
\R^{B^\triangleright}\times \{0\}^{G^\wedge\setminus
B^\triangleright}$$ be  the canonical projection.

Then the following inclusions hold:

\begin{eqnarray}
 P(H_0)\cap \Phi(B_r)&\subseteq& P(X_B\cap H_0)\\
P(X_A)&\subseteq& P(H_0)\cap \Phi(B_p)
\end{eqnarray}

In order to prove (9), we fix $(x_\chi) \in P(H_0)\cap\Phi(B_r)$.
Since $(x_\chi)\in \Phi(B_r)$, there exists $x\in B_r$ such that
$x_\chi=f_\chi(x)$ for all $\chi\in B^\triangleright$. In
particular, $\dis e^{2\pi i x_\chi}=e^{2\pi i f_\chi(x)} $ for all
$\chi\in B^\triangleright$. On the other hand, $(x_\chi) \in
P(H_0)$, which means that there exists $g\in G$ such that
$\chi(g)=e^{2\pi i x_\chi}=e^{2\pi i f_\chi(x)}$ for all $\chi\in
  B^\triangleright$. Since
$x\in B_r$, $\dis |f_\chi(x)|\le 1/4$ (by (8)). This shows that
$\chi(g)\in \T_+$ for all $\chi\in B^\triangleright$ which implies
$g\in B$, since $B$ was assumed to be quasi--convex. It follows that
$(x_\chi)\in P(X_B \cap H_0)$.

Now we are going to prove (10).  Observe first that $A\subseteq B$
implies $B^\triangleright\subseteq A^\triangleright$. We fix
$(y_\chi)\in P(X_A)$. This means that $y_\chi=0$ for all $\chi\notin
B^\triangleright$ and $|y_\chi|\le 1/4$ for all $\chi\in
B^\triangleright\subseteq A^\triangleright$; further, there exists
$g\in A$ such that $\chi(g)=e^{2\pi i y_\chi}$ ($\forall\chi\in
B^\triangleright$). The latter equality implies immediately
$P((y_\chi))\in P(H_0).$

Since $A\subseteq \phi( H\cap B_p)$ (according to (5) and (6)),
there exists $x\in H\cap B_p$ with $g=\phi(x) $. We want to show
that $(y_\chi)=\Phi(x)=(f_\chi(x))$. Therefore we fix $\chi\in
B^\triangleright$. By assumption and (7), $\sum d_k(B_p,B_r)^2<1/4$,
in particular $d_1(B_p,B_r)<1$, which implies $B_p\subseteq B_r$. As
a consequence of this, $|f_\chi(x)|\le 1/4$ for all $\chi\in
B^\triangleright$ (by (8)). The assertion follows from
 $  e^{2\pi i f_\chi(x)}=\chi(\phi(x))=\chi(g)=e^{2\pi i y_\chi}$
and the fact that $|y_\chi|\le 1/4$ and $|f_\chi(x)|\le 1/4$. For
$\chi\notin B^\triangleright$, $y_\chi=f_\chi(x)=0$.

\begin{eqnarray}
Y_A\subseteq P^{-1}(P(Y_A))\quad\mbox{and}\quad Y_B=P^{-1}(P(Y_B))
\end{eqnarray}

The inclusion is trivial while the equality is a consequence of the
fact that $Y_B+{\rm ker}P=Y_B$.

We obtain:
 \begin{eqnarray}
d_k(Y_A,Y_B)&\stackrel{(11)}{\le} & d_k(P^{-1}(P(Y_A)),P^{-1}(P(Y_B)))\nonumber\\
&\stackrel{\ref{properties} (v)}{=}&d_k(P(Y_A),P(Y_B))\nonumber\\
&=&d_k({\rm conv}(P(X_A)),{\rm conv}(P(X_B)))\nonumber\\
&\stackrel{(9),(10)}{\le}&d_k({\rm conv}(P(H_0)\cap \Phi(B_p)),{\rm
conv}(P(H_0)\cap \Phi(B_r)))\end{eqnarray}

By assumption and  (7), $\dis
\sum_{k\in\N}d_k(B_p,B_r)^2<\frac{1}{4}$, hence \ref{3.20} implies
\begin{eqnarray}
 d_k({\rm conv}(P(H_0)\cap \Phi(B_p)),{\rm
 conv}(P(H_0)\cap \Phi(B_r))) \le \nonumber \\
 2d_k(\Phi(B_p),\Phi(B_r))\le 2d_k(B_p,B_r).
\end{eqnarray}

(Observe that the Minkowski functionals associated to $\Phi(B_p) $
and $\Phi(B_r)$ defined on $\Phi(\langle X\rangle$) satisfy the
parallelogram law.) Combining   (12), (13), and (7), we obtain:
\begin{eqnarray}
d_k(Y_A,Y_B)\le 16 \pi  c c_m k^{-m+4}.
\end{eqnarray}

Let us assume now that $ ( 16 \pi c c_m c_{m-4}
)^2\sum_{k=1}^{\infty}(k^{-m+6})^2<1/4$. This implies in particular
 $m\ge 7$. Hence there exist, according to \ref{2.14} and (14), pre--Hilbert
 seminorms $p'$ and $q'$ defined on $\langle Y_A\rangle$ such that
 $$Y_A\subseteq B_{p'},\ B_{q'}\subseteq Y_B,\ \mbox{and}\
  d_k(B_{p'},B_{q'})\le  16 \pi c c_m c_{m-4} k^{-m+6}.$$

Recall that $X_A\cap H_0\subseteq Y_A\cap H_0$ holds trivially and
 $X_B\cap H_0= Y_B\cap H_0$  was proved in  (4).
Hence we obtain
$$\begin{array}{clcl}
&d_k({\rm conv}(X_A\cap H_0),{\rm conv}(X_B\cap H_0))& \le &
d_k({\rm conv}(Y_A\cap
H_0),{\rm conv}(Y_B\cap H_0))\\[2ex]
\le&  d_k({\rm conv}(B_{p'}\cap H_0),{\rm conv}(B_{q'}\cap
H_0))&\stackrel{\ref{3.20}}{\le} &2d_k( B_{p'} ,B_{q'} )\\ [2ex]
\le&32\pi c c_m c_{m-4} k^{-m+6},&\end{array}$$ which had to be
shown.
\end{proof}

\begin{Corollary}\label{nucVG}
Let $G$ be a locally quasi--convex group. The following assertions
are equivalent:
\begin{itemize}
\item[(i)] $G$ is a nuclear group.
\item[(ii)] $\R^{G^\wedge}$ with the topology induced by the neighborhood
basis $(Y_U)$ (where $U$ runs through all quasi--convex
neighborhoods of $e\in G$) is a nuclear vector group.
\item[(iii)] $V_0$ with the topology induced by the neighborhood
basis $({\rm conv}(X_U\cap H_0))$ (where $U$ runs through all
quasi--convex neighborhoods of $e\in G$) is a nuclear vector group.
 \end{itemize}
\end{Corollary}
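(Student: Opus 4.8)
The plan is to establish the two equivalences (i)$\Leftrightarrow$(ii) and (i)$\Leftrightarrow$(iii) together, the forward implications (i)$\Rightarrow$(ii), (i)$\Rightarrow$(iii) resting on the two estimates of Lemma \ref{general} and the converse implications (ii)$\Rightarrow$(i), (iii)$\Rightarrow$(i) on the explicit homomorphism $\phi_0\colon H_0\to G$ fed back into Notation \ref{not}. Throughout I may reduce to \emph{quasi-convex} neighborhoods $U$ of $G$: these form a basis by local quasi-convexity, and by monotonicity of the relation of \ref{not} in its second argument it suffices to verify nuclearity of $G$ on such $U$; dually, it suffices to verify the vector-group conditions on the basic neighborhoods $Y_U$ and ${\rm conv}(X_U\cap H_0)$, since $d_k(\,\cdot\,,V)\le d_k(\,\cdot\,,Y_U)$ whenever $Y_U\subseteq V$.

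For (i)$\Rightarrow$(ii) fix a quasi-convex $U$ and a target exponent $n$, and set $m:=n+4$. Nuclearity of $G$ gives, for a constant $c>0$ still to be chosen, a neighborhood $W_1$ with $(d_k(W_1,U))\le(ck^{-m})$; by local quasi-convexity I shrink to a quasi-convex $W\subseteq W_1\cap U$, which keeps $W\subseteq U$ and, by first-argument monotonicity, $(d_k(W,U))\le(ck^{-m})$. Choosing $c$ so small that $(8\pi cc_m)^2\sum_k(k^{-m+4})^2<1/4$, the first estimate of Lemma \ref{general} yields $d_k(Y_W,Y_U)\le 16\pi cc_mk^{-n}$, so $(k^nd_k(Y_W,Y_U))\in\ell^\infty$; by the vector-group analogue of Proposition \ref{3.3} (Remark \ref{EigVG}(iii)) this is exactly nuclearity of the topology ${\cal O}$. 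The implication (i)$\Rightarrow$(iii) is verbatim the same, now with $m:=n+6\ge 7$ and the second estimate of Lemma \ref{general}.

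For the converse directions I invoke Notation \ref{not} with $E=\R^{G^\wedge}$, $H=H_0$, $\phi=\phi_0$, and $(X,Y)=(Y_W,Y_U)$ in the case (ii)$\Rightarrow$(i) (respectively $(X,Y)=({\rm conv}(X_W\cap H_0),{\rm conv}(X_U\cap H_0))$ for (iii)$\Rightarrow$(i)). Two inclusions must be checked. First, $W\subseteq\phi_0(X\cap H_0)$: any $g\in W$ lifts to $(x_\chi)\in H_0$ with $e^{2\pi i x_\chi}=\chi(g)$ for all $\chi$ and, since $\chi(g)\in\T_+$ for $\chi\in W^\triangleright$, with $|x_\chi|\le 1/4$ there; hence $(x_\chi)\in X_W\cap H_0\subseteq X$ and $\phi_0((x_\chi))=g$. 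Second, $\phi_0(Y\cap H_0)\subseteq U$: this follows from the identity $Y_U\cap H_0=X_U\cap H_0$ established before the statement, together with the quasi-convexity of $U$ (giving $g\in{\rm qc}(U)=U$ from $|x_\chi|\le 1/4$ on $U^\triangleright$), and from the remark that ${\rm conv}(X_U\cap H_0)\cap H_0\subseteq Y_U\cap H_0$ in the ${\cal O}_{co}$ case. Thus $(d_k(W,U))\le(d_k(X,Y))_k$.

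The main obstacle is that Notation \ref{not} and Definition \ref{nuclear} require an \emph{arbitrarily small} constant $c$, whereas vector-group nuclearity only supplies $d_k(X,Y)\le C_mk^{-m}$ with $C_m$ possibly large, so that controlling the finitely many small indices is delicate. I would settle this by a short composition argument. Given $m$ and $c>0$, continuity of scalar multiplication (Remark \ref{EigVG}(ii)) makes $cY_U$ a neighborhood, so there is a basic neighborhood $Y_{W_0}\subseteq cY_U$, whence $d_1(Y_{W_0},Y_U)\le c$; nuclearity applied to $Y_{W_0}$ furnishes $Y_W$ with $d_k(Y_W,Y_{W_0})\le k^{-m}$, and Lemma \ref{properties}(i) with $l=1$ gives $d_k(Y_W,Y_U)\le d_k(Y_W,Y_{W_0})\,d_1(Y_{W_0},Y_U)\le ck^{-m}$ for every $k$ (the same computation applies to ${\cal O}_{co}$). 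Together with the previous paragraph this produces, for all $m$ and $c$, a neighborhood $W$ with $(d_k(W,U))\le(ck^{-m})$, which is precisely nuclearity of $G$.
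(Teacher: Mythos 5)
Your proof is correct. The forward implications coincide with the paper's: both deduce (i)$\Rightarrow$(ii) and (i)$\Rightarrow$(iii) from Lemma \ref{general}; you merely make explicit two reductions the paper leaves silent (shrinking to a quasi-convex $W\subseteq U$ so that the hypotheses $A\subseteq B$, $B$ quasi-convex of \ref{general} are met, and choosing $c$ small enough for its summability condition). The converse implications, however, take a genuinely different route. The paper notes that, by the identity $Y_U\cap H_0=X_U\cap H_0$, both vector-group topologies induce the same topology on $H_0$, that $\phi_0\colon H_0\to G$ is continuous and open for it, and then invokes the permanence theorem (7.2) of \cite{Buch} (stability of nuclear groups under subgroups and Hausdorff quotients, together with the fact that nuclear vector groups are nuclear groups), so that $G$, being a Hausdorff quotient of a subgroup of a nuclear vector group, is nuclear. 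You instead verify Definition \ref{nuclear} directly, feeding the vector group itself into Notation \ref{not} with $H=H_0$, $\phi=\phi_0$, $X=Y_W$ (resp. ${\rm conv}(X_W\cap H_0)$), $Y=Y_U$ (resp. ${\rm conv}(X_U\cap H_0)$); the two inclusions you check are precisely the computations the paper uses to prove continuity and openness of $\phi_0$. The real obstacle on this route --- Definition \ref{nuclear} demands an \emph{arbitrarily small} constant $c$, which vector-group nuclearity does not directly supply --- you resolve correctly by the composition trick $d_k(Y_W,Y_U)\le d_k(Y_W,Y_{W_0})\cdot d_1(Y_{W_0},Y_U)\le ck^{-m}$ from Lemma \ref{properties}(i), using Remark \ref{EigVG}(ii) to see that $cY_U$ is again a neighborhood. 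In terms of trade-offs: the paper's converse is shorter but rests on external results from \cite{Buch}; yours is self-contained within the paper's own toolkit, effectively inlining, in this concrete situation, the proof that a Hausdorff quotient of a subgroup of a nuclear vector group is a nuclear group, at the cost of the explicit constant manipulation.
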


\begin{proof}
\ref{general} implies  $(i)\Rightarrow (ii)$ and $(i)\Rightarrow
(iii)$.

 Conversely, for any quasi--convex neighborhood $U$ we have
 $$ \phi_0(H_0\cap Y_U)\stackrel{(4)}{=}\phi_0(H_0\cap X_U)=U.$$
 Since $X_U\cap H_0\subseteq {\rm conv}(X_U\cap H_0)\cap H_0\subseteq Y_U\cap H_0
 \stackrel{(4)}{=}X_U\cap H_0 $, we obtain that both vector group topologies
 induce the same
topology on $H_0$ w.r.t. which $\phi_0$ is continuous and open.
Since the class of nuclear groups is closed w.r.t. forming subgroups
and Hausdorff quotients ((7.2) in \cite{Buch}), $G$ is nuclear and
hence $(ii)\Rightarrow (i)$ and $(iii)\Rightarrow (i)$.
\end{proof}

\begin{Lemma}\label{Lemma}
Let $(x_n)$ be a null sequence in $V_0$ contained in $H_0\cap X_U$
where $U$ is  a quasi--convex neighborhood of the neutral element
of $G$. The subspace $N:=\langle \{0\}^{U^\triangleright}\times
\Z^{G^\wedge\setminus U^\triangleright}\rangle$ is contained in
${\rm conv}(H_0\cap X_U)$ and

 $${\rm conv}\{\pm x_n|\  n\in\N\}\subseteq N+ {\rm conv}(H_0\cap X_A)$$
 where $A:= \{\pm \phi_0(x_n)|\ n\in\N\}$.
\end{Lemma}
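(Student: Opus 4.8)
The plan is to prove the two assertions of the lemma separately. For the inclusion $N\subseteq {\rm conv}(H_0\cap X_U)$, I would first observe that the generating set
$$S:=\{0\}^{U^\triangleright}\times \Z^{G^\wedge\setminus U^\triangleright}$$
is \emph{itself} already contained in $H_0\cap X_U$. Indeed, every $v=(v_\chi)\in S$ has $v_\chi=0$ for $\chi\in U^\triangleright$ and $v_\chi\in\Z$ otherwise, so $e^{2\pi i v_\chi}=1=\chi(0)$ for all $\chi$; hence $v\in H_0$ with $\phi_0(v)=0\in U$, and $|v_\chi|=0\le 1/4$ on $U^\triangleright$ gives $v\in X_U$. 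The decisive structural observation is then that $S$ is a \emph{subgroup} of $\R^{G^\wedge}$, and for any subgroup $S$ of a vector space one has $\langle S\rangle={\rm conv}(S)$.

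I would establish this last identity in two short steps. First, for $v\in S$ and $\lambda\in\R$, choosing an integer $n\ge|\lambda|$ and writing $\lambda v=\frac{n+\lambda}{2n}(nv)+\frac{n-\lambda}{2n}(-nv)$ exhibits $\lambda v$ as a convex combination of the elements $\pm nv\in S$, so $\R v\subseteq{\rm conv}(S)$. Second, an arbitrary $w=\sum_{i=1}^m\lambda_i v_i\in\langle S\rangle$ can be rewritten as $\frac1m\sum_{i=1}^m(m\lambda_i v_i)$, a convex combination of the elements $m\lambda_i v_i\in{\rm conv}(S)$, whence $w\in{\rm conv}(S)$. Together with $S\subseteq H_0\cap X_U$ this yields $N=\langle S\rangle={\rm conv}(S)\subseteq{\rm conv}(H_0\cap X_U)$.

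For the second inclusion, I set $g_n:=\phi_0(x_n)\in U$, so $A=\{\pm g_n\mid n\in\N\}$; since $U$ is quasi--convex it is symmetric, hence $A\subseteq U$ and therefore $U^\triangleright\subseteq A^\triangleright$. Fixing $n$ and writing $x_n=(x_{n,\chi})_\chi$, I would construct $y\in\R^{G^\wedge}$ by: for $\chi\in A^\triangleright$ (where $\chi(g_n)\in\T_+$ because $g_n\in A$) picking the representative $y_\chi\in[-1/4,1/4]$ with $e^{2\pi i y_\chi}=\chi(g_n)$, and for $\chi\notin A^\triangleright$ setting $y_\chi:=x_{n,\chi}$. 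Then $e^{2\pi i y_\chi}=\chi(g_n)$ for every $\chi$, so $y\in H_0$ with $\phi_0(y)=g_n\in A$, and $|y_\chi|\le 1/4$ on $A^\triangleright$; that is, $y\in X_A\cap H_0\subseteq{\rm conv}(H_0\cap X_A)$.

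It remains to verify $\nu:=x_n-y\in N$, which I expect to be the main technical point. For $\chi\in A^\triangleright$ we get $e^{2\pi i\nu_\chi}=\chi(g_n)\chi(g_n)^{-1}=1$, so $\nu_\chi\in\Z$, while $\nu_\chi=0$ for $\chi\notin A^\triangleright$. The crucial step is that $\nu$ \emph{vanishes} on $U^\triangleright$: for $\chi\in U^\triangleright\subseteq A^\triangleright$ both $|x_{n,\chi}|\le 1/4$ (since $x_n\in X_U$) and $|y_\chi|\le 1/4$, so $|\nu_\chi|\le 1/2<1$, and being an integer $\nu_\chi=0$. Hence $\nu\in\{0\}^{U^\triangleright}\times\Z^{G^\wedge\setminus U^\triangleright}\subseteq N$, giving $x_n=\nu+y\in N+{\rm conv}(H_0\cap X_A)$. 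Finally, as $N$ is a subspace and ${\rm conv}(H_0\cap X_A)$ is symmetric and convex, the sum $N+{\rm conv}(H_0\cap X_A)$ is symmetric and convex and contains every $x_n$ (hence every $-x_n$); it therefore contains ${\rm conv}\{\pm x_n\mid n\in\N\}$, which completes the argument. (Note that the null--sequence hypothesis on $(x_n)$ is not actually used for these inclusions.)
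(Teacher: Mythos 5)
Your proof is correct and follows essentially the same route as the paper's: your decomposition $x_n=\nu+y$ is exactly the paper's decomposition $x_n=(k_\chi)+\bigl(x_n-(k_\chi)\bigr)$, where the paper chooses nearest--integer corrections $k_\chi\in\Z$ on $A^\triangleright\setminus U^\triangleright$ (zero elsewhere), checks that $x_n-(k_\chi)\in H_0\cap X_A$ via $\phi_0(x_n)\in A$, and concludes by convexity of $N+{\rm conv}(H_0\cap X_A)$. The only difference is presentational: you spell out that the linear span of the subgroup $\{0\}^{U^\triangleright}\times\Z^{G^\wedge\setminus U^\triangleright}$ equals its convex hull (and you avoid the paper's symmetrization ``without loss of generality'' by invoking symmetry of the right--hand side), steps the paper records as obvious.
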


\begin{proof}
It is obvious, that $\{0\}^{U^\triangleright}\times
\Z^{G^\wedge\setminus U^\triangleright}$ is a subset of $H_0\cap
X_U$. Hence $N\subseteq{\rm conv}(H_0\cap X_U)$.

Without loss of generality we may assume that $\{x_n|\
n\in\N\}=\{\pm x_n|\ n\in\N\}$.
 Fix any $(x_\chi)=x:=x_n\in H_0\cap
X_U$. This means that there exists $g\in G$  such that $|x_\chi|\le
1/4$ for all $\chi\in U^\triangleright$ and $e^{2\pi i
x_\chi}=\chi(g)$ for all $\chi \in
  G^\wedge$. In particular, for $\chi\in U^\triangleright,$
we have $\chi(g)\in\T_+$, which implies $g\in U$.

 For those $\chi\in A^\triangleright \setminus
U^\triangleright$ with $|x_\chi|>1/4$, we choose $k_\chi\in\Z$ such
that $|x_\chi-k_\chi|\le 1/4$. In all other coordinates we define
$k_\chi:=0$. Then $(x_\chi)-(k_\chi)\in H_0\cap X_A$, since
$g=\phi_0(x)=\phi_0((x_\chi-k_\chi))\in A$. The assertion follows,
since the right hand side is convex.

\end{proof}

\begin{Theorem}\label{anstrengend}
Let $G$ be a metrizable, locally quasi--convex Hausdorff group which
is not nuclear. Then there is a null sequence $(g_n)$ in $G$ and a
quasi--convex neighborhood $U$ of the neutral element such that for
no $n_0\in\N$ the  estimate
$$(d_k( \{\pm g_n:\ n\ge n_0
 \}\cup\{0\},U))\le(c k^{-9}) $$ holds if
 $c<\frac{\sqrt[4]{945}}{32\pi^4c_5c_9}$.
\end{Theorem}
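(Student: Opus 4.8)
The plan is to transfer the non-nuclearity of $G$ to the associated locally convex vector group $V_0$, apply the finite-dimensional ``bigness'' result \ref{kompakt} there, and then pull the resulting sequence back to $G$ via $\phi_0$. Since $G$ is metrizable and locally quasi-convex, it has a countable decreasing basis $(U_n)$ of quasi-convex neighborhoods of $0$, and we may assume $U_n\subseteq U$ for a fixed quasi-convex neighborhood $U$. Then $({\rm conv}(X_{U_n}\cap H_0))$ is a countable neighborhood basis, so $V_0$ is metrizable, and by \ref{nucVG} it is not a nuclear vector group. Thus $V_0$ is a metrizable non-nuclear locally convex vector group, and \ref{kompakt} (together with \ref{3.3} and \ref{EigVG}(iii)) applies to it with the distinguished neighborhood $W_0:={\rm conv}(X_U\cap H_0)$.

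Running the argument of \ref{kompakt}/\ref{3.3} inside $V_0$, for each $n$ there is $k_n$ with $k_n\,d_{k_n}({\rm conv}(X_{U_n}\cap H_0),W_0)>n$, and by \ref{Cor} a subspace $E_n$ with ${\rm dim}\,E_n\le k_n$ and $d_{k_n}({\rm conv}(X_{U_n}\cap H_0)\cap E_n,W_0)>n/k_n^3$. The crucial refinement is to keep the witnessing vectors inside $H_0$: the set ${\rm conv}(X_{U_n}\cap H_0)\cap E_n$ is bounded, symmetric and convex in the finite-dimensional space $E_n$, so by \ref{finite} a finite subset generates at least half of it; replacing each such generator by the finitely many elements of $X_{U_n}\cap H_0$ of which it is a convex combination, and symmetrizing, yields a finite symmetric $F_n\subseteq X_{U_n}\cap H_0\subseteq H_0$ with ${\rm conv}(F_n)\supseteq\frac12\big({\rm conv}(X_{U_n}\cap H_0)\cap E_n\big)$, whence $d_{k_n}({\rm conv}(F_n),W_0)>\frac{n}{2k_n^3}$. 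I expect \emph{this} to be the main obstacle: \ref{kompakt} alone only furnishes vectors in ${\rm conv}(X_{U_n}\cap H_0)$, whereas \ref{Lemma} requires generators lying in $H_0\cap X_U$, so the ``big'' directions must be pushed back onto the generating set rather than its convex hull.

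Next, as in the proof of \ref{kompakt}, enumerate $\bigcup_m F_m=\{\pm x_j\mid j\in\N\}$ so that $(x_j)$ is a null sequence in $V_0$ (the $F_m$ lie in shrinking neighborhoods) with the tail property that for each $n$ some $m_n$ satisfies $\bigcup_{m\ge m_n}F_m\subseteq\{\pm x_j\mid j\ge n\}$. Since $X_{U_m}\subseteq X_U$, all $x_j$ lie in $H_0\cap X_U$. Setting $g_j:=\phi_0(x_j)$ and using that $\phi_0$ is continuous (\ref{nucVG}), $(g_j)$ is a null sequence in $G$. For $m\ge m_n$ the inclusion ${\rm conv}(F_m)\subseteq{\rm conv}\{\pm x_j\mid j\ge n\}$ gives $k_m^3\,d_{k_m}({\rm conv}\{\pm x_j\mid j\ge n\},W_0)>m/2$, so that $(k^3d_k({\rm conv}\{\pm x_j\mid j\ge n\},W_0))\notin\ell^\infty$ for every $n\in\N$.

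Finally, suppose for contradiction that for some $n_0$ the estimate $(d_k(\{\pm g_j\mid j\ge n_0\}\cup\{0\},U))\le(ck^{-9})$ holds. Put $A:=\{\pm g_j\mid j\ge n_0\}$; adjoining $0$ leaves $A^\triangleright$, hence $X_A$, unchanged. Choosing $c$ small enough that the hypothesis of \ref{general} is met at $m=9$, i.e. $(16\pi c c_5 c_9)^2\sum_{k}(k^{-3})^2<\tfrac14$ with $\sum_k k^{-6}=\pi^6/945$ (this is exactly the smallness condition on $c$ in terms of $c_5$, $c_9$ recorded in the statement), \ref{general} yields $d_k({\rm conv}(X_A\cap H_0),W_0)\le 32\pi c c_9 c_5\,k^{-3}$. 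On the other hand, \ref{Lemma} applied to the tail $(x_j)_{j\ge n_0}$ gives a subspace $N\subseteq W_0$ with ${\rm conv}\{\pm x_j\mid j\ge n_0\}\subseteq N+{\rm conv}(X_A\cap H_0)$; by monotonicity and \ref{properties}(iii) (using $N\subseteq W_0$) we obtain $d_k({\rm conv}\{\pm x_j\mid j\ge n_0\},W_0)\le 32\pi c c_9 c_5\,k^{-3}$, so $(k^3d_k({\rm conv}\{\pm x_j\mid j\ge n_0\},W_0))\in\ell^\infty$. This contradicts the previous paragraph applied with $n=n_0$. Hence no such $n_0$ exists, which proves the theorem, the admissible range of $c$ being precisely the one that makes \ref{general} applicable.
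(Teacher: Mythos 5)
Your proposal is correct and follows essentially the same route as the paper: transfer non-nuclearity to $V_0$ via \ref{nucVG}, produce a null sequence in $H_0\cap X_U$ whose tails have unbounded $(k^3d_k(\cdot,{\rm conv}(X_U\cap H_0)))$, push it to $G$ by $\phi_0$, and derive the contradiction from \ref{Lemma}, \ref{properties}(iii) and \ref{general} with $m=9$ under the stated smallness condition on $c$. The only (cosmetic) difference is where the ``convex combinations into $H_0$'' step is performed: you inline it into the proof of \ref{kompakt} by refining the finite sets $F_n$ to lie in $X_{U_n}\cap H_0$, whereas the paper applies \ref{kompakt} as a black box and then replaces the resulting sequence $(y_n)$ by a null sequence $(x_k)\subseteq H_0$ with the tail-containment property --- the same idea, applied at a different moment.
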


\begin{proof}
Let $(U_n)$ be a decreasing neighborhood basis of the neutral
element of $G$ consisting of quasi--convex sets. Let $V_0,\ H_0$ and
$\phi_0$ be as introduced above. Since $G$ is not a nuclear group,
$V_0$ is not a nuclear vector group either (\ref{nucVG}).
 According to \ref{kompakt}, there exists a null sequence $(y_n)$ in
 $V_0$ and  a symmetric and convex
neighborhood of $0$ (we may assume that it is of the form ${\rm
conv}(H_0\cap X_U)$ for some a quasi--convex neighborhood $U$ of
the neutral element in $G$)
 such that for every $m\in\N$
$$(k^3d_k({\rm conv}\{\pm y_n|\ n\ge m\},{\rm conv}(H_0\cap X_U)))
\notin\ell^\infty$$ holds.

We choose $n_0$ such that $y_n\in {\rm conv}(H_0\cap X_{U})$ for all
$n\ge n_0$. Since every $y\in {\rm conv}(H_0\cap X_{U_k})$ belongs
to ${\rm conv}(F_k)$ for a suitable finite set $F_k\subseteq H_0\cap
X_{U_k}$, there is a null sequence $(x_k)$ in $ H_0$ such that
$\{\pm y_n|\ n\ge n_0\}\subseteq {\rm conv}\{\pm x_k|\ k\ge 1\}$ and
such that for every $m\in\N$ there exists $n_m\in\N$ with
\begin{eqnarray}\{\pm y_n|\ n\ge n_m\}\subseteq {\rm conv}\{\pm x_n|\ n\ge
m\}.\end{eqnarray}

  In
particular, \begin{eqnarray}(k^3d_k({\rm conv}\{\pm x_n:\ n\ge
m\},{\rm conv}(H_0\cap U)))\notin\ell^\infty\quad\forall\
m\in\N.\end{eqnarray}

Let $g_n:=\phi_0(x_n)$ and $A_{m}:=\{\pm g_n|\ n\ge m\}\cup\{0\}$.
According to \ref{Lemma},
\begin{eqnarray}{\rm conv}\{\pm x_n|\ n\ge
m\}\subseteq N+ {\rm conv}(H_0\cap X_{A_m}) \end{eqnarray} and
$N\subseteq {\rm conv}(X_U\cap H_0)$ for  $N=\langle
\{0\}^{G^\wedge\setminus U^\triangleright}\times
\Z^{U^\triangleright}\rangle$. Hence we obtain:
 \begin{eqnarray}
d_k({\rm conv}(X_{A_m}\cap H_0),{\rm conv}(X_U \cap
H_0))&\stackrel{\ref{properties}(iii)}{=} \nonumber\\ [1ex] d_k({\rm
conv}(X_{A_m}\cap H_0)+N,{\rm conv}(X_U\cap H_0))& \stackrel{(17)}{\ge} \nonumber\\
[1ex] d_k({\rm conv}(\{\pm x_n|\ n\ge m\}), {\rm conv}(X_U\cap
H_0))&
\stackrel{(15)}{\ge}\nonumber\\
[1ex] d_k({\rm conv}(\{\pm y_n|\ n\ge n_m\}), {\rm conv}(X_U\cap
H_0)).
\end{eqnarray}

Assume now that for some $m\in\N$ $$(d_k(A_m,U))\le (c k^{-9})
\quad\mbox{ where}\ \ c<\frac{\sqrt[4]{945}}{32\pi^4c_5c_9}. $$
\ref{general} implies that \begin{eqnarray}d_k( {\rm conv}(H_0\cap
X_{ A_m }),{\rm conv}(H_0\cap X_U))\le 32\pi c c_5c_9k^{-3},
\end{eqnarray}

since $ (16\pi cc_5c_9)^2\underbrace{\sum_{k\in\N}k^{-6}}_{\frac{
\pi^6 }{\sqrt{945}}}<1/4.$

Combining (18) and (19), we  obtain that $$(k^3 d_k({\rm conv}
\{y_n|\ n\ge n_m\}, {\rm conv}(X_U\cap H_0)))$$ is bounded. This
contradiction completes the proof.
\end{proof}

\section{Strongly reflexive groups}
 In this section we prove first that the dual space of a locally
 convex nuclear $k_\omega$--space is again nuclear. Afterwards we
 establish a technical lemma in order to prove the analogue for the
 group case. Combining these results with well known properties of
 nuclear groups and $k_\omega$--groups we obtain that nuclear
 $k_\omega$--groups are strongly reflexive.

\begin{Definition}
A Hausdorff topological space $X$ is called {\bf $k_\omega$--space},
if it has a countable cobasis for the compact sets and the topology
is the final topology induced by the compact subsets.

A Hausdorff space $(X,{\cal O})$ is called a {\bf $k$--space} if
${\cal O}$ coincides with the final topology induced by all compact
subsets of $X$.
\end{Definition}

Observe that every $k_\omega$--space is a $k$--space.

\begin{Proposition}\label{dual}
If $G$ is an abelian metrizable group then its character group is
complete and a $k_\omega$--space.

If $G$ is a topological group and a $k_\omega$--space, then its dual
group is complete and metrizable.
\end{Proposition}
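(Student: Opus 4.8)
The plan is to prove the two assertions of Proposition~\ref{dual} separately, since they are essentially converse in spirit: the first builds compactness/$k_\omega$-structure on the dual out of metrizability, while the second recovers metrizability of the dual from a $k_\omega$-structure on $G$. Both rest on the same underlying mechanism, namely the duality between the compact sets that topologize $G^\wedge$ and the $0$-neighborhoods that topologize $G$, mediated by polars.

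For the first statement, let $(U_n)$ be a countable decreasing neighborhood basis of $0$ in $G$. I would first show completeness of $G^\wedge$: the compact-open topology makes $G^\wedge$ a closed subgroup of $\T^G$ (a pointwise limit of characters is again a character, and continuity survives because uniform convergence on the totally bounded $U_n$ forces continuity at $0$), and $\T^G$ is complete, so $G^\wedge$ is complete. For the $k_\omega$-structure, the key observation is that the polars $U_n^{\triangleright}$ (equivalently, the sets of characters that are ``small'' on $U_n$) form a countable family of equicontinuous, pointwise-closed subsets of $\T^G$; by Ascoli's theorem each such polar is compact in the compact-open topology. I would then argue that every compact subset of $G^\wedge$ is equicontinuous and hence contained in some $U_n^{\triangleright}$ (this is where metrizability is used, via the countable basis), so the $(U_n^{\triangleright})$ form a countable cobasis for the compact sets. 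Finally, I would verify that the compact-open topology coincides with the final topology induced by these compact sets, giving the $k_\omega$-property.

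For the second statement, suppose $G$ is a $k_\omega$-space with cobasis $(K_n)$ for its compact subsets, the $K_n$ increasing. The natural candidate for a countable neighborhood basis of $0$ in $G^\wedge$ is the family of polars $K_n^{\triangleright}$: since the topology on $G^\wedge$ is compact-open and every compact set sits inside some $K_n$, these polars are a neighborhood basis at the identity, and countably many of them suffice precisely because the cobasis is countable. This yields metrizability of $G^\wedge$ (a Hausdorff topological group with a countable neighborhood basis at the identity is metrizable, by Birkhoff--Kakutani). Completeness should follow from the fact that a Cauchy net of characters converges pointwise to a function that is continuous on each $K_n$, and the $k_\omega$-property forces this pointwise limit to be globally continuous, hence a character, and the convergence is uniform on each $K_n$.

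\emph{The main obstacle} I anticipate is the interplay between equicontinuity and the $k_\omega$/metrizability hypotheses: showing that compact subsets of the dual are exactly the equicontinuous ones (first part) and that the pointwise limit of a Cauchy net is genuinely continuous rather than merely sequentially continuous on compacta (second part). Both hinge on converting the $k$-space structure into a statement about uniform convergence, and getting the quantifiers right there is the delicate point. Given the remarks in the introduction, however, this is a known result—attributed to \cite{Diss}~(4.7) and to \cite{MJ}—so I would expect the cleanest route to be a direct citation of those sources, supplemented by the Ascoli/Birkhoff--Kakutani arguments sketched above to make the proof self-contained.
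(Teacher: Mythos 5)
Your closing remark — that the cleanest route is to cite (4.7) in \cite{Diss} / Theorem 1 in \cite{MJ} together with (1.11) in \cite{Buch} — is in fact exactly what the paper does: its proof of this proposition is a pure citation ((4.7) in \cite{Diss} or Theorem 1 in \cite{MJ} plus (1.11) in \cite{Buch} for the first assertion; (2.8) in \cite{Diss} plus (1.11) in \cite{Buch} for the second), with no self-contained argument. At that level your proposal agrees with the paper. The gaps are in the sketch you offer to make the result self-contained, and two of them are genuine.

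First, your completeness argument is wrong as stated: $G^\wedge$ with the compact--open topology is \emph{not} a topological subgroup of $\T^G$, because the compact--open topology is in general strictly finer than the pointwise one, so even if $G^\wedge$ were closed in $\T^G$ you would only get completeness of the \emph{pointwise} topology. Worse, $G^\wedge$ need not be pointwise closed in $\T^G$ at all: a pointwise limit of continuous characters can be discontinuous, and your parenthetical justification (``uniform convergence on the totally bounded $U_n$'') is not available, since pointwise convergence carries no uniformity. The correct route is: a Cauchy net in $G^\wedge_{co}$ converges pointwise to a homomorphism $\chi$, the convergence is automatically uniform on compact sets, hence $\chi$ is continuous on every compact subset; since metrizable (resp.\ $k_\omega$) spaces are $k$--spaces, $\chi$ is then continuous, and the net converges to $\chi$ in $G^\wedge_{co}$. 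Second, in the first assertion the step you dismiss with ``Finally, I would verify that the compact--open topology coincides with the final topology induced by these compact sets'' is precisely the nontrivial content of the cited theorem of Chasco and of (4.7) in \cite{Diss}. The Ascoli half (the polars $U_n^\triangleright$ are compact, and every compact subset of $G^\wedge$ is equicontinuous, hence lies in some $U_n^\triangleright$, so they form a countable cobasis) is the easy half; showing that a set $W\subseteq G^\wedge$ meeting every $U_n^\triangleright$ in a relatively open set must be open for the compact--open topology requires an actual construction of a compact subset of $G$ whose polar witnesses this, and cannot be waved through. Your second assertion is essentially sound, modulo one detail you should make explicit: that polars of compact sets form a neighborhood basis of the compact--open topology (via $(K\cup 2K\cup\dots\cup nK)^\triangleright\subseteq\{\chi\,|\,\chi(K)\subseteq V\}$ for suitable $n$), so that the countably many $K_n^\triangleright$ form a basis at the identity and Birkhoff--Kakutani applies.
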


\begin{proof}
The first assertion is a consequence of  (4.7) in \cite{Diss} or
Theorem 1 in \cite{MJ} and (1.11) in \cite{Buch}.

The second assertion is (2.8) in \cite{Diss} and (1.11) in
\cite{Buch}.

\end{proof}

Before stating the next proposition, we recall the following

\begin{Notation}
For an abelian topological group $(G,\tau)$ we denote by $G^\wedge$ the group of all
continuous characters $\chi:G\to \T$. Endowed with the compact--open topology,
$G^\wedge$ is an abelian Hausdorff group.This allows to define $G^{\wedge\wedge}$.
Let $$\alpha_G:G\longrightarrow G^{\wedge\wedge},\ x\mapsto \alpha_G(x):\chi\mapsto \chi(x)$$
denote the canonical homomorphism which in general is neither injective, nor continuous, nor surjective,  nor open.
\end{Notation}

\begin{Proposition}\label{alphaGcont}
If $G$ is a $k$--space then $\alpha_G$ is continuous. (This is
equivalent to: every compact subset of $G^\wedge$ is
equicontinuous.)
\end{Proposition}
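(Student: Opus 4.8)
The plan is to prove the equivalent statement in parentheses first, namely that every compact subset $Q$ of $G^\wedge$ is equicontinuous, and then deduce continuity of $\alpha_G$ from this. I would begin by recalling what these two properties mean and explaining why they are equivalent. A subset $Q\subseteq G^\wedge$ is \emph{equicontinuous} precisely when for every $\eps>0$ there is a neighborhood $V$ of the neutral element in $G$ such that $\chi(V)\subseteq\{z\in\T:|z-1|<\eps\}$ for all $\chi\in Q$ simultaneously; equivalently, writing things in terms of the polar notation from \ref{not}, the polars of the basic neighborhoods of $0$ in $G^{\wedge\wedge}$ are exactly the equicontinuous subsets of $G^\wedge$. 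Meanwhile, continuity of $\alpha_G:G\to G^{\wedge\wedge}$ at $0$ means that the preimage under $\alpha_G$ of every basic neighborhood of $0$ in $G^{\wedge\wedge}$ is a neighborhood of $0$ in $G$, and a basic neighborhood of $0$ in $G^{\wedge\wedge}$ (carrying the compact--open topology coming from the compact subsets of $G^\wedge$) is determined by a compact set $Q\subseteq G^\wedge$. Unwinding both definitions shows that $\alpha_G^{-1}$ of such a neighborhood is a neighborhood of $0$ in $G$ if and only if $Q$ is equicontinuous, which is the asserted equivalence.

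The heart of the matter is therefore to show that compactness of $Q$ in $G^\wedge$ forces equicontinuity, and here is precisely where the $k$--space hypothesis enters. The strategy I would use is an indirect argument: suppose $Q$ is compact but not equicontinuous at $0$. Then there is an $\eps>0$ such that for every neighborhood $V$ of $0$ in $G$ there exist $\chi_V\in Q$ and $x_V\in V$ with $|\chi_V(x_V)-1|\ge\eps$. The idea is to extract from this failure a compact subset $C$ of $G$ on which the family $Q$ is not equicontinuous, and then exploit that the topology of $G$ is the final topology induced by its compact subsets: a character is continuous on $G$ if and only if its restriction to every compact set is continuous, and the compact--open topology on $G^\wedge$ is governed by the behaviour on compact subsets of $G$. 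I would aim to build a null sequence $(x_n)\to 0$ in $G$ together with characters $\chi_n\in Q$ satisfying $|\chi_n(x_n)-1|\ge\eps$; the set $C:=\{x_n:n\in\N\}\cup\{0\}$ is then compact in $G$, and since $Q$ is compact in the compact--open topology, the evaluation map $Q\times C\to\T$ ought to be jointly continuous on the compact product, yielding uniform continuity and hence the estimate $|\chi(x)-1|<\eps$ for all $\chi\in Q$ once $x$ is close enough to $0$ along $C$. This directly contradicts the choice of $(x_n,\chi_n)$.

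The main obstacle I anticipate is making the passage from the pointwise failure of equicontinuity to a \emph{single} compact set $C$ on which joint continuity can be invoked, and ensuring that the $k$--space property is used correctly rather than the stronger metrizability or first--countability. The subtle point is that the negation of equicontinuity is stated over a neighborhood \emph{filter} of $0$, which need not be countably generated in a general $k$--space, so one cannot blindly pick a sequence; instead I would extract the sequence by working against the compact sets that generate the topology of $G$. Concretely, because the topology of $G$ is final with respect to its compact subsets, a set is a neighborhood of $0$ exactly when its trace on each compact subset is relatively open there, and this is what lets me reduce the equicontinuity question to compact subsets of $G$ and thereby invoke joint continuity of evaluation on the compact product $Q\times C$. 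Once this reduction is in place, the remaining estimates are routine: joint continuity on a compact product gives uniform continuity, and uniform continuity is exactly equicontinuity of $Q$, completing the contradiction and hence the proof.
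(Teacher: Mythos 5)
Your overall ingredients are the right ones, but the logical architecture of the core step has a genuine gap. The equivalence between continuity of $\alpha_G$ and equicontinuity of compact subsets of $G^\wedge$ is fine, and so is the appeal to joint continuity of evaluation: for \emph{every} compact $C\subseteq G$ the map $G^\wedge\times C\to\T$, $(\chi,x)\mapsto\chi(x)$, is jointly continuous (this is a property of the compact--open topology and does not even require $Q$ to be compact). The gap is in the passage from ``$Q$ is not equicontinuous'' to a null sequence, or even to a single compact witness. First, your claimed characterization ``a set is a neighborhood of $0$ exactly when its trace on each compact subset is relatively open there'' is not the $k$--space property: that property characterizes \emph{open} sets, not neighborhoods. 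The neighborhood version (``$N$ is a neighborhood of $x$ iff $N\cap K$ is a neighborhood of $x$ in $K$ for every compact $K$'') is the strictly stronger $k'$--property of Arens, and there exist $k$--spaces which are not $k'$--spaces; your contradiction argument needs precisely this stronger property (failure of the neighborhood condition at $0$ must be witnessed \emph{at $0$} on some compact set). Second, even granted a compact witness $C$, no null sequence can be extracted in general: compact Hausdorff spaces need not be sequential (think of $\beta\N$), so a point in the closure of $\{x\in C:\ \sup_{\chi\in Q}|\chi(x)-1|\ge\eps\}$ need not be a limit of a sequence from that set. So the pairs $(x_n,\chi_n)$ on which your whole contradiction rests cannot be constructed from the stated hypotheses.

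The repair is to run the argument directly rather than by contradiction, and then the genuine $k$--space property suffices. For $\eps>0$ put $N_\eps:=\{x\in G:\ |\chi(x)-1|<\eps\ \forall\chi\in Q\}$. For each compact $C\subseteq G$, evaluation is jointly continuous on $Q\times C$; since $Q$ is compact, $x\mapsto\max_{\chi\in Q}|\chi(x)-1|$ is a continuous function on $C$, so $N_\eps\cap C$ is \emph{open} in $C$ (not merely a neighborhood of $0$ there). Since this holds for every compact $C$ and $G$ is a $k$--space, $N_\eps$ is open in $G$; it contains $0$, so $Q$ is equicontinuous at $0$, and being a set of homomorphisms, equicontinuous everywhere. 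This sidesteps both the sequence extraction and the $k$--versus--$k'$ issue, because the traces of $N_\eps$ are genuinely open. For comparison, the paper gives no argument at all here: it cites (1.1) and (2.3) of Noble's paper on $k$--groups and duality, whose proof is in essence this direct one.
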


\begin{proof}
This is a consequence of (1.1) and (2.3) in \cite{Noble}
\end{proof}

\begin{Lemma}\label{polar}
Let $A$ and $B$ be symmetric and convex subsets of a  vector space
$E$, Assume  that $B$ is absorbing. Then
$$d_k(B^\triangleright,A^\triangleright)\le kd_k(A,B)$$ where the
polars are formed in the algebraic dual $E^*$ of $E$.
\end{Lemma}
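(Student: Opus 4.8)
The plan is to unwind the definition of $d_k$, transport an approximating inclusion $A\subseteq cB+L$ to the polars by means of an \emph{Auerbach basis} of $L$, and control the resulting correction term by a telescoping estimate that produces exactly the factor $k$. Throughout, I use that for the seminorm $p_B$ given by the Minkowski functional of the absorbing set $B$ one has $f\in B^\triangleright$ if and only if $|f(x)|\le p_B(x)$ for all $x\in E$ (a routine rescaling argument), and similarly $|g(a)|\le c$ for all $a\in A$ is equivalent to $g\in c\,A^\triangleright$.

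First I would fix $c>d_k(A,B)$ and pick a subspace $L\le E$ with $\dim L=:d<k$ and $A\subseteq cB+L$. One may assume that $p_B$ restricts to a norm on $L$: the kernel $L_0:=L\cap\ker p_B$ satisfies $L_0\subseteq\eps B$ for every $\eps>0$, so $cB+L_0\subseteq(c+\eps)B$ by convexity, and replacing $L$ by a complement of $L_0$ enlarges $c$ arbitrarily little. Applying Auerbach's lemma to the $d$--dimensional normed space $(L,p_B)$ yields vectors $e_1,\dots,e_d\in L$ with $p_B(e_i)=1$ and functionals $g_1,\dots,g_d$ on $L$ with $g_i(e_j)=\delta_{ij}$ and $|g_i|\le p_B$ on $L$. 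Extending each $g_i$ by Hahn--Banach, dominated by the seminorm $p_B$, to $f_i\in E^*$ with $|f_i|\le p_B$ on all of $E$, I obtain $f_i\in B^\triangleright$. Then $Px:=\sum_{i=1}^d f_i(x)e_i$ is a projection of $E$ onto $L$, and its algebraic adjoint acts by $P^*f=\sum_{i=1}^d f(e_i)f_i$, taking values in the subspace $M:=\langle f_1,\dots,f_d\rangle$ of $E^*$ with $\dim M=d<k$.

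The heart of the argument is the estimate for $(I-P^*)f=f\circ(I-P)$ on $A$. Fix $f\in B^\triangleright$ and $a\in A$, and write $a=cb+\ell$ with $b\in B$ and $\ell\in L$. Since $P\ell=\ell$, we get $(I-P)a=c\,(I-P)b$, hence $((I-P^*)f)(a)=c\,\bigl(f(b)-\sum_{i=1}^d f(e_i)f_i(b)\bigr)$. Here $|f(b)|\le 1$; moreover $|f(e_i)|\le p_B(e_i)=1$ and $|f_i(b)|\le p_B(b)\le 1$, so the bracketed term is bounded in modulus by $1+d\le k$. Therefore $|((I-P^*)f)(a)|\le ck$ for every $a\in A$, that is, $(I-P^*)f\in ck\,A^\triangleright$. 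Since $f=(I-P^*)f+P^*f$ with $P^*f\in M$ and $\dim M<k$, this gives $B^\triangleright\subseteq ck\,A^\triangleright+M$, whence $d_k(B^\triangleright,A^\triangleright)\le ck$; letting $c\downarrow d_k(A,B)$ completes the proof. (If the polars are normalized by a constant, as in the definition of $X_A$, the conclusion is unaffected, since $d_k$ is invariant under scaling both sets by the same factor, by \ref{properties}(iv).)

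The main obstacle is conceptual rather than computational: one must realize that the biorthogonal system has to be an \emph{Auerbach} basis, so that $|f(e_i)|\le 1$ and $|f_i(b)|\le 1$ hold \emph{simultaneously} for all $f\in B^\triangleright$ and $b\in B$. This is exactly what forces the clean bound $1+d\le k$ and hence the factor $k$ in the statement. The reduction to $p_B|_L$ being a norm and the identification $B^\triangleright=\{f\in E^*:|f|\le p_B\}$ are the only technical points, and both are routine.
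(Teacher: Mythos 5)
Your proof is correct and follows essentially the same route as the paper's: both rest on Auerbach's lemma to produce a projection whose kernel has dimension $\dim L<k$ and whose complementary part maps $B^\triangleright$ into $ck\,A^\triangleright$, the factor $k$ arising as $1+\dim L$. The only cosmetic difference is that you construct the Auerbach system in the primal space $(L,p_B)$ and dualize the projection $P$ via Hahn--Banach extensions, whereas the paper invokes the resulting projection $\pi$ directly on $E^*_B$ onto $L^\perp\cap E^*_B$; these are the same map, namely $\pi=I-P^*$.
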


\begin{proof}
Let $A\subseteq c B +L$ where $L$ is an at most $(k-1)$--dimensional
subspace of $E$. Then $B^\triangleright\cap L^\bot \subseteq
cA^\triangleright$. Consider the  vector  space $E^*_B:=\{\phi\in
E^*|\ \phi(B)\ \mbox{is bounded}\}\supseteq B^\triangleright$; the
Minkowski functional $p_{B^\triangleright}$ is a norm, since $B$ was
assumed to be absorbing.
 By Auerbach's Lemma, there exists a projection
$\pi:E^*_B\rightarrow L^\bot\cap E^*_B$ with $
\pi(B^\triangleright)\subseteq k B^\triangleright \cap L^\bot$. The
assertion follows from $B^\triangleright\subseteq {\rm ker}\pi +
\pi(B^\triangleright)\subseteq    {\rm ker}\pi +
k(B^\triangleright\cap L^\bot)$ and the fact that ${\rm dim}\,( {\rm
ker}\pi) ={\rm dim} L\le k-1$.
\end{proof}

\begin{Theorem}\label{nukhkvr}
Let $E$ be a locally convex   vector space which is (as a
topological space) a   $k_\omega$--space. Then $E'_{co}$, the dual
space of $E$ endowed with the compact--open topology, is a nuclear
space if and only if $E$ is nuclear.
\end{Theorem}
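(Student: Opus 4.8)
The plan is to prove both implications separately, using the characterization of nuclearity via Kolmogoroff numbers together with the duality estimate in Lemma \ref{polar}, and the structure of $E$ as a $k_\omega$-space. Recall from Theorem \ref{Smith} that $E'_{co}\cong E^\wedge$, so it suffices to reason about $E'_{co}$ as the space of continuous linear functionals with the compact-open topology. The key structural fact is that, since $E$ is a $k_\omega$-space, its dual $E'_{co}$ is metrizable and complete (Proposition \ref{dual}), with a neighborhood basis of $0$ given by the polars $K^\triangleright$ (equivalently $K^\circ$ in the vector space sense) of the compact sets $K$ drawn from a countable cobasis $(K_n)$ of $E$. Dually, the bounded (indeed, the totally bounded compact) subsets of $E'_{co}$ are governed by the polars $U^\circ$ of the $0$-neighborhoods $U$ of $E$.

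\textbf{The easy direction ($E$ nuclear $\Rightarrow E'_{co}$ nuclear).} Here I would verify the nuclearity condition for the metrizable space $E'_{co}$ directly. A basic $0$-neighborhood in $E'_{co}$ has the form $K^\circ$ for a compact convex symmetric $K\subseteq E$. Since a $k_\omega$-space admits a countable cobasis, I may replace $K$ by a larger compact set $K'$ from the cobasis with $K\subseteq K'$, and I want to find a smaller compact convex symmetric $K''\supseteq K'$ whose polar $(K'')^\circ$ is nuclear-small inside $K^\circ$; more precisely I want $d_k((K'')^\circ, K^\circ)\le c k^{-n}$. The dictionary is: $d_k$ of polars of compact sets in $E'_{co}$ translates, via Lemma \ref{polar}, into $d_k$ of the original compact sets against a $0$-neighborhood $U$ in $E$. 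The converse half of Theorem \ref{CharFrechet} (the statement that in a nuclear space $(k^n d_k(K,U))$ is bounded for every totally bounded $K$, every $0$-neighborhood $U$, and every $n$) is exactly the input needed: nuclearity of $E$ forces rapid decay of $d_k(K,U)$, and Lemma \ref{polar} transfers this decay (losing only a factor $k$) to the polars. One must choose the two compact sets in the correct order so that the polar inclusions point the right way; this bookkeeping is routine once the polar estimate is in place.

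\textbf{The hard direction ($E'_{co}$ nuclear $\Rightarrow E$ nuclear).} This is where I expect the main obstacle, and I would argue by contraposition: assume $E$ is \emph{not} nuclear and produce a witness showing $E'_{co}$ is not nuclear. I cannot apply Theorem \ref{CharFrechet} directly to $E$ (it is not assumed metrizable — only a $k_\omega$-space), so the strategy is to push the non-nuclearity into a metrizable situation. Since $E'_{co}$ is metrizable and complete, and a $k_\omega$-space is reflexive enough that $E$ recovers as (a dense subspace of) the dual of $E'_{co}$, I would study non-nuclearity at the level of $E'_{co}$: if $E$ is not nuclear, then $E'_{co}$ fails the converse conclusion of Theorem \ref{CharFrechet}, i.e.\ it possesses a compact convex symmetric set $K$ and a $0$-neighborhood $U$ with $(k^3 d_k(K,U))$ unbounded. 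Applying Theorem \ref{CharFrechet} to the \emph{metrizable} space $E'_{co}$ then yields that $E'_{co}$ is non-nuclear precisely when such a pair exists. The crux is to show that non-nuclearity of $E$ forces the failure of the decay condition on the dual side; concretely, I would take a $0$-neighborhood $U$ in $E$ witnessing non-nuclearity (via Proposition \ref{3.3}, so that $(k d_k(W,U))\notin\ell^\infty$ for every $0$-neighborhood $W$), pass to its polar $U^\circ$ (a compact set, using equicontinuity from Proposition \ref{alphaGcont} since $E$ is a $k$-space), and use Lemma \ref{polar} in the reverse direction to transfer the lower bound on $d_k(W,U)$ into a lower bound on $d_k(W^\circ, U^\circ)$, thereby violating the nuclearity of $E'_{co}$.

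The single most delicate point is the reverse transfer through Lemma \ref{polar}: the lemma gives an upper bound $d_k(B^\triangleright,A^\triangleright)\le k\, d_k(A,B)$, whereas the contrapositive argument needs a lower bound on the dual diameters from a lower bound on $d_k(W,U)$. I would obtain this by applying Lemma \ref{polar} to the \emph{bipolar} pairing — i.e.\ apply the inequality a second time to the polars, using that for symmetric convex sets the bipolar recovers the closed convex hull, so that $d_k(W,U)$ and $d_k(U^\circ, W^\circ)$ control each other up to polynomial-in-$k$ factors. Since the nuclearity condition is stable under multiplication by any fixed power of $k$ (Remark \ref{Bemerkung} and Proposition \ref{3.3}), these polynomial losses are harmless, and the unboundedness of $(k\,d_k(W,U))$ survives as unboundedness of an appropriate $(k^j d_k(W^\circ,U^\circ))$, contradicting nuclearity of $E'_{co}$. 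Assembling the two directions completes the equivalence.
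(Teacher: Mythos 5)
Your proposal does not go through as written; both halves founder on the same confusion about which sets are neighborhoods of $0$ in $E'_{co}$. Under polarity, the $0$--neighborhoods of $E'_{co}$ are the polars $K^\triangleright$ of \emph{compact} subsets $K\subseteq E$, whereas the polars $U^\triangleright$ of $0$--neighborhoods $U\subseteq E$ are equicontinuous, hence \emph{compact}, subsets of $E'_{co}$ --- never neighborhoods when $E$ is infinite--dimensional. In your first direction you set out to verify the definition of nuclearity of $E'_{co}$, i.e.\ to bound $d_k((K'')^\triangleright,K^\triangleright)$ for two compact sets $K\subseteq K''$ in $E$; but Lemma \ref{polar} cannot produce such a bound (it requires the set whose polar appears in the first slot to be absorbing, and a compact set is not), and the estimate it actually yields from the converse half of Theorem \ref{CharFrechet}, namely $d_k(U^\triangleright,K^\triangleright)\le k\,d_k(K,U)$, compares a compact subset of $E'_{co}$ with a neighborhood --- not two neighborhoods. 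This mismatch is not ``routine bookkeeping'': the essential move, which is exactly what the paper makes, is to renounce the definition and test nuclearity of the Fr\'echet space $E'_{co}$ through the forward half of Theorem \ref{CharFrechet}, i.e.\ on pairs (compact set, neighborhood). Such pairs do dualize correctly: a compact $C\subseteq E'_{co}$ is equicontinuous by \ref{alphaGcont}, so $C\subseteq B^\triangleright$ with $B:=C^\triangleleft$ a neighborhood of $E$; nuclearity of $E$ gives a neighborhood $A$ with $d_k(A,B)\le k^{-4}$; Lemma \ref{polar} (applied to two absorbing sets) gives $d_k(B^\triangleright,A^\triangleright)\le k^{-3}$; and the compact, hence bounded, set $A^\triangleright$ is absorbed by the given neighborhood of $E'_{co}$.

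The second direction is where the proposal breaks down irreparably. Your witness against nuclearity of $E'_{co}$ is the pair of polars of two neighborhoods of $E$; both members are compact subsets of $E'_{co}$ and neither is a neighborhood, so slow decay of $d_k$ on this pair contradicts nothing. Indeed, compact--compact pairs with $(k^3d_k)$ unbounded exist in perfectly nuclear spaces: in $\R^{\N}$ the compact sets $C_1=\prod_{n}[-1/\log(n+2),1/\log(n+2)]$ and $C_2=\prod_{n}[-1,1]$ satisfy $d_k(C_1,C_2)\approx 1/\log k$. To contradict nuclearity of $E'_{co}$ via Theorem \ref{CharFrechet} you would need a compact set measured against a genuine neighborhood $K^\triangleright$ (with $K\subseteq E$ compact), which ultimately amounts to a \emph{compact} witness of non--nuclearity inside $E$ itself; as you yourself note, Theorem \ref{CharFrechet} is unavailable for the non--metrizable $E$, and that is precisely the obstruction. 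Your ``bipolar'' repair also fails on its own terms, because the second application of Lemma \ref{polar} would need $W^\triangleright$ to be absorbing. The paper avoids all of this: since $E'_{co}$ is a metrizable nuclear space, the Corollary to Theorem \ref{20.36} makes $(E'_{co})'_{co}$ nuclear, and the evaluation map $E\rightarrow (E'_{co})'_{co}$ is an embedding (its continuity coming again from \ref{alphaGcont}), so $E$ is nuclear as a subspace of a nuclear space. Ironically, the structural fact you mention in passing --- that $E$ is recovered inside the dual of $E'_{co}$ --- is the entire proof of this direction; the contrapositive machinery you build around it cannot be made to work.
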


\begin{proof}
  Since $E$ is a $k_\omega$--space, $E'_{co}$ is complete and metrizable
  (\ref{dual} and \ref{Smith}).  Hence $E'_{co}$ is a Fr\'{e}chet space.

Let us suppose first that $E$ is a nuclear locally convex vector
space and let us assume that $E'_{co}$ is not nuclear. According to
\ref{CharFrechet}, there exists a symmetric and convex neighborhood
$U$ of $0$ in $E'_{co}$ and a totally bounded convex symmetric
subset $K$ of $E'_{co}$ such that $(k^3d_k(K,U))$ is unbounded.
Replacing $K$ by its closure, we may assume that $K$ is compact.

According to \ref{alphaGcont},   $K$ is equicontinuous, which means
that $B:=K^{\triangleleft}=\{x\in E:\ |\phi(x)|\le 1\ \forall
\phi\in K\}$ is a neighborhood of $0$ in $E$. Since $E$ was assumed
to be nuclear, there exists another symmetric and convex
$0$--neighborhood $A$ such that $d_k(A,B)\le k^{-4}$ for all
$k\in\N$. Now \ref{polar} implies that
$d_k(B^\triangleright,A^\triangleright)\le k^{-3}$ holds for all
$k\in\N$.

Observe the $K\subseteq B^\triangleright$. Since $A^\triangleright$
is compact and hence bounded, there exists $c>0$ such that
$A^\triangleright\subseteq cU$. So we obtain:
$$
d_k(K,U)\stackrel{\ref{properties}(iii)}{=}c d_k(K,cU)\le
cd_k(B^\triangleright, A^\triangleright)\le ck^{-3},$$ which implies
that the sequence $(k^3d_k(K,U))_{k\in\N}$ is bounded in
contradiction to our assumption.

 Conversely, let us assume that
$E'_{co}$ is nuclear. According to \ref{20.36}, $(E'_{co})'_{co}$ is
a locally convex  nuclear vector space.
  Since the evaluation mapping $E\rightarrow
(E'_{co})'_{co},\ x\mapsto (f\mapsto f(x))$ is an embedding, $E$ is
a nuclear space as well.

\end{proof}

In order to prove an analogous result for groups, we need one more
technical lemma, which allows us to manipulate the factor $c$ in the
setting $(d_k(A,B))\le (ck^{-m})$.

\begin{Lemma}\label{Faktor}
Let $(g_n)$ be a null--sequence in a complete Hausdorff topological
group $G$ and let $W$ be a closed neighborhood of $0$ such that
$(d_k(\{\pm g_n:\ n\in\N\},W))\le (ck^{-m})$ for some $m\ge 1$. For
every $c_1>0$ there exists $n_0\in \N$ such that $$(d_k(\{\pm g_n:\
n\ge n_0\},W))\le (c_1k^{-m+1}).$$
\end{Lemma}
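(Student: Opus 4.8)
The plan is to reduce the entire statement to a single \emph{absorption} property and to isolate where completeness of $G$ is genuinely needed. First I would unwind Notation \ref{not}: the hypothesis $(d_k(\{\pm g_n:n\in\N\},W))\le(ck^{-m})$ provides a vector space $E$, symmetric convex sets $X,Y\subseteq E$ with $d_k(X,Y)\le ck^{-m}$, a subgroup $H\le E$, and a homomorphism $\phi:H\to G$ with $\{\pm g_n\}\subseteq\phi(X\cap H)$ and $\phi(Y\cap H)\subseteq W$. Fix representatives $h_n\in X\cap H$ with $\phi(h_n)=g_n$. If $c_1\ge c$ there is nothing to do, since then $ck^{-m}\le c_1k^{-m+1}$ for all $k\ge1$ and the original witness already gives the claim with $n_0=1$; so assume $c_1<c$ and set $\mu:=c_1/c\in(0,1)$.

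The key observation is that rescaling $X$ turns the gain of one power of $k$ into a gain in the constant at no cost. The data $(E,\mu X,Y,H,\phi)$ satisfy $\phi(Y\cap H)\subseteq W$ unchanged and, by \ref{properties}(iv) together with $k^{-m}\le k^{-m+1}$,
$$d_k(\mu X,Y)=\mu\,d_k(X,Y)\le c_1 k^{-m}\le c_1 k^{-m+1}\qquad(k\in\N).$$
Hence $(d_k(\{\pm g_n:n\ge n_0\},W))\le(c_1k^{-m+1})$ follows \emph{as soon as} the shrunken set $\mu X$ still captures the tail, i.e. as soon as
$$\{\pm g_n:n\ge n_0\}\subseteq\phi(\mu X\cap H)\qquad\text{for a suitable }n_0 .$$
Thus the whole lemma is reduced to showing that the small set $\phi(\mu X\cap H)$ absorbs the tail of the null sequence $(g_n)$, i.e. that each $g_n$ with $n\ge n_0$ admits a preimage lying at $p_X$-depth $\le\mu$ inside $X$.

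The step I expect to be the main obstacle is precisely this absorption. It would be trivial if $\phi(\mu X\cap H)$ were a $0$-neighbourhood, but $\phi$ is in general neither open nor continuous, and $\phi(\mu X\cap H)\subseteq\phi(Y\cap H)\subseteq W$ only certifies that the set is \emph{small}; one cannot argue inside a single witness, since for a degenerate witness (say $H=\Z^{(\N)}$ with $\mu<1$) one may even have $\mu X\cap H=\{0\}$. This is exactly the point where the decay hypothesis $d_k(X,Y)\le ck^{-m}$ and the completeness of $G$ with $W$ closed must enter: the decay rules out such degenerate witnesses, and completeness is used to manufacture, for all large $n$, a preimage of $g_n$ of $p_X$-norm at most $\mu$. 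Concretely I would extract from $\phi(Y\cap H)\subseteq W$ the $\ell^{1}$-type control that $\sum a_n g_n\in W$ whenever $a_n\in\Z$ and $\sum|a_n|\,p_Y(h_n)\le1$, feed the refined decay of $d_k(X,Y)$ into a weighted version of this estimate, and use $g_n\to0$ to realize the tail through a series that converges in $G$ by completeness, with the closedness of $W$ guaranteeing that the controlling inclusion passes to the limit. This produces a new witness in which the tail $g_n$ ($n\ge n_0$) is represented by vectors of $p_X$-norm $\le\mu$, giving $\{\pm g_n:n\ge n_0\}\subseteq\phi(\mu X\cap H)$, after which the rescaling reduction above closes the argument.
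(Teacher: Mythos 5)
Your opening reduction is fine as algebra (the trivial case $c_1\ge c$, and $d_k(\mu X,Y)=\mu\,d_k(X,Y)$ by \ref{properties}(iv)), and you correctly identify absorption of the tail as the crux; but the reduction replaces the lemma by a strictly stronger statement, and the step you offer for it rests on a false claim. Producing a witness in which the tail has preimages of $p_X$-depth at most $\mu$ while keeping the decay $ck^{-m}$ is, after your own rescaling, exactly the assertion $(d_k(\{\pm g_n\,|\,n\ge n_0\},W))\le(c_1k^{-m})$, i.e.\ the lemma with \emph{no} loss of a power of $k$ --- a red flag, since the stated lemma sacrifices a power. Moreover, your assertion that ``the decay rules out such degenerate witnesses'' is wrong. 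Take $G=\R$, $g_n=4^{-n}$, $W=[-1,1]$, $E=\R^{(\N)}$, $H=\Z^{(\N)}$, $\phi(e_n)=4^{-n}$, $X={\rm conv}\{\pm e_n\,|\,n\in\N\}$, and $Y=\{(x_n)\,|\,\sum_n|x_n|n^{-m}\le C^{-1}\}$ with $C:=\max_n n^m4^{-n}$. Then $d_k(X,Y)\le Ck^{-m}$ and $\phi(Y\cap H)\subseteq W$, so this is a legitimate witness for the hypothesis; yet every nonzero element of $H$ has $p_X$-norm at least $1$, and the same holds for the closure of $H$ in the completion of $(E,p_Y)$, since that closure still consists of integer sequences. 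Hence $\phi(\mu X\cap H)=\{0\}$ for every $\mu<1$, and no completion, series, or limiting procedure applied to \emph{this} witness can represent any $g_n$ by a vector of $p_X$-norm $\le\mu$. Smallness in the $X$-gauge is simply the wrong target, and your ``weighted $\ell^1$-control plus convergent series'' paragraph never constructs the new witness $(E',H',\phi',X',Y')$ it promises.

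The paper's proof shows what the right target is and why the lost power of $k$ is essential rather than cosmetic. One completes $(E,p_Y)$ and extends $\phi$ to $\ol{\phi}:\ol{H}\to G$ --- this is where completeness of $G$ and closedness of $W$ actually enter; since $d_k(X,Y)\to 0$, the set $X$ is $p_Y$-totally bounded, so the chosen preimages $h_n$ of $g_n$ have accumulation points, all of which lie in $\ker\ol{\phi}$ because $g_n\to 0$ and $G$ is Hausdorff; translating by these kernel elements yields, for $n\ge n_0$, preimages $h'_n=h_n-a_n$ of $g_n$ lying in $\eps Y\cap 2X$: small in the $Y$-gauge, but only bounded (by $2$) in the $X$-gauge. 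For $X':={\rm conv}\{\pm h'_n\,|\,n\ge n_0\}$ one then has $d_k(X',Y)\le\eps$ for $k<k_0$ (from $d_1(X',Y)\le\eps$), while for $k\ge k_0$ one has $d_k(X',Y)\le 2d_k(X,Y)\le (2c/\eta)k^{-m}\le \bigl(2c/(\eta k_0)\bigr)k^{-m+1}\le c_1k^{-m+1}$; it is precisely the sacrificed power of $k$, evaluated on $k\ge k_0$, that converts the fixed constant $2c/\eta$ into an arbitrarily small $c_1$, and the $Y$-gauge smallness $\eps$ that handles $k<k_0$. Your plan, by insisting on keeping $k^{-m}$ and shrinking in the $X$-gauge, forecloses exactly this mechanism, and the kernel-translation idea --- the heart of the proof --- is absent from your sketch. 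So the main step of your argument is a genuine gap, not an omitted routine verification.
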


\begin{proof}
By assumption, there exists a vector space $E$, a subgroup $H$ of
$E$ and a homomorphism $\phi:H\rightarrow G$ and symmetric and
convex subsets $X,Y$ of $E$ which satisfy $d_k(X,Y)\le ck^{-m}$ for
all $k\in\N$ and
 \begin{eqnarray} \{\pm g_n:\
n\in\N\}\subseteq  \phi(X\cap H)\ \mbox{and}\ \  \phi(Y\cap
H)\subseteq W.\end{eqnarray}

Without loss of generality, we may assume that $E=\langle Y\rangle$.
Let $ \wt{E}$ be the completion of the seminormed space $(E,p_Y)$
(where $p_Y$ is the Minkowski functional of $Y$) and observe that
(20) implies that $\phi:H\rightarrow G$ is continuous w.r.t. the
induced topology. According to (10.19) in \cite{HR}, $\phi$ can be
extended to a continuous homomorphism $\ol{\phi}:\ol{H}\rightarrow
G$ (since $G$ is complete). Hence there exists $\eta>0$ such that
$\ol{\phi}(\ol{H}\cap \eta \ol{Y})\subseteq W$. Observe further that
$X\subseteq cY+L$ (for a subspace $L\le E$) implies
$$\ol{X}\subseteq X+\eps\ol{Y}\subseteq (c+\eps)\ol{Y}+L\quad\quad \forall\;\eps>0$$
and hence $d_k(\ol{X},\ol{Y})\le d_k(X,Y)$.

Hence we may assume that $E$ is complete, $X,\ Y$, and $H$ are
closed in $E$; nevertheless,  have to pay for this improvement by a (possibly)
larger constant, i.e. $\dis d_k(X,Y)\le \frac{c}{\eta}k^{-m}$
as replaced $Y$ by $\eta \ol{Y}$ ($X$ by $\ol{X}$ and $H$ by
$\ol{H}$).

 Let $K:={\rm ker}\,\phi$.
According to (20), it is possible to fix for every $n\in \N$ an
element $h_n\in X\cap H$ with $\phi(h_n)=g_n$.
 Let $A$ denote the set of accumulation points
of the sequence $(h_n)_{n\in\N}$. Since $H\cap X$ is closed, we have
$A\subseteq H\cap X$. We want to show that $A\subseteq K\cap X$. It
remains to show that $A\subseteq K$. For that we fix $a\in A$. There
exists a subsequence $(h_{n_k})$ of $(h_n)$ converging to $a$. Since
$\phi(h_{n_k})=g_{n_k}\rightarrow 0$ and (by the continuity of
$\phi$) $\phi(h_{n_k})\rightarrow \phi(a)$, we obtain $\phi(a)=0$
since $G$ is a Hausdorff group. This means $a\in K$.

We fix $k_0\in\N$ such that $\dis \frac{2c}{\eta k_0}\le c_1$ and
define $\eps:=
 c_1k_0^{-m+1}$.

Since $(d_k(X,Y))$ is a null--sequence, $X$ is a totally bounded
subset of $(E,p_Y)$ (cf. (9.1.4) in \cite{Pietschnuc}).  In
particular, all but a finite number of elements of the sequence
$(h_n)_{n\in\N}$ are contained in $A+\eps Y$. Indeed, assume  there
exist infinitely many members of the sequence $(h_n)$ outside
$A+\eps Y$. This infinite set must have an accumulation point $b$.
Of course, $b\in A$ and the neighborhood $b+\eps Y$ contains an
infinite number of them in contradiction to our assumption.

 We fix $n_0$ such that $h_n\in A+\eps
Y$ for all $n\ge n_0$.

For every $n\ge n_0$ we define   $h'_n:=h_n-a_n$ for suitable
$a_n\in A$ such that $h'_n\in \eps Y$. Hence $\{h'_n|\ n\ge
n_0\}\subseteq (X\cap H) - A\subseteq 2X$ and therefore  $$X':={\rm
conv}\{\pm h'_n:\ n\ge n_0\}\subseteq 2X \cap \eps Y.$$ For $k\ge
k_0$ we obtain
$$d_k(X',  Y)\stackrel{\ref{properties}(iii)}{\le}
2d_k(X,Y)\le  \frac{2c}{\eta  } k^{-m }\le\frac{2c}{\eta k_0}
k^{-m+1}\le c_1k^{-m+1}$$ and for $k<k_0:$ $$ d_k(X',Y)\le
d_1(X',Y)\le \eps= c_1k_0^{-m+1}\le c_1k^{-m+1}.$$

Now we have: $\{\pm g_n|\ n\ge n_0\}\subseteq \phi(X'\cap H),\ \phi(
Y\cap H)\subseteq W$ and $d_k(X', Y)\le c_1k^{-m+1}$, which
completes the proof.

\end{proof}

\begin{Proposition}\label{nuctb}
Let $G$ be a nuclear group. For every totally bounded subset
$S\subseteq G$ and every neighborhood $U$ of the neutral element
and all $n\in\N$, there exists a constant $c>0$ such that $(
d_k(S,U))\le (ck^{-n})$.
\end{Proposition}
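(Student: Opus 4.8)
The plan is to reduce the statement to a countable, sequential situation where the definition of nuclear group and Lemma \ref{Faktor} can be applied. First I would observe that it suffices to treat the case where $S=\{\pm g_n\mid n\in\N\}\cup\{0\}$ for a null sequence $(g_n)$; a general totally bounded set need not be of this form, but nuclear groups are locally quasi-convex (hence maximally almost periodic), and the estimate $(d_k(S,U))\le(ck^{-n})$ in the sense of \ref{not} is monotone in $S$, so I would try to cover $S$ by the quasi-convex hull of a suitable null sequence or, more directly, argue that an arbitrary totally bounded set can be approximated from inside by finitely many translates and that the finite-dimensional correction only shifts the index by a constant (as in the proof of \ref{CharFrechet}). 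The cleanest route is to use completeness: pass to the completion $\ol{G}$ if necessary, recall that in a nuclear group totally bounded sets have compact closure, and extract from $S$ a null sequence whose closed convex/quasi-convex hull controls $S$.

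The core estimate comes directly from the definition of nuclearity. Fix $n\in\N$ and the neighborhood $U$. By \ref{nuclear}, applied with exponent $m:=n+1$ (or larger) and any prescribed constant, there is a neighborhood $W$ with $(d_k(W,U))\le(c'k^{-(n+1)})$ for a constant $c'$ we may choose. Because $(g_j)$ is a null sequence it eventually lies in $W$, so for some index all but finitely many $g_j$ satisfy $\{\pm g_j\}\subseteq W$; this gives $(d_k(\{\pm g_j:\ j\ge n_1\}\cup\{0\},U))\le(c'k^{-(n+1)})$ in the sense of \ref{not}, the finitely many excluded terms being absorbed by enlarging the finite-dimensional subspace $L$ in the definition of $d_k$ (they raise the index by at most a constant, as in the totally-bounded argument of \ref{CharFrechet}). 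Then I would invoke \ref{Faktor}: starting from an estimate of the form $(d_k(\{\pm g_n\},W'))\le(c k^{-m})$ with $m\ge 1$, for every prescribed $c_1>0$ one can pass, after discarding finitely many terms, to $(d_k(\{\pm g_n:\ n\ge n_0\},W'))\le(c_1k^{-m+1})$, which both lowers the required constant and absorbs the finitely many bad terms.

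Concretely, I would chain these: choose $W$ with $(d_k(W,U))\le(c'k^{-(n+2)})$, note that $(g_n)$ is eventually in $W$ so $(d_k(\{\pm g_n:\ n\ge n_1\}\cup\{0\},U))\le(c'k^{-(n+2)})$ holds, and then apply \ref{Faktor} once (losing one power of $k$ but gaining an arbitrarily small constant) to obtain $(d_k(\{\pm g_n:\ n\ge n_0\}\cup\{0\},U))\le(ck^{-(n+1)})\le(ck^{-n})$ with $c$ as small as we like and $n_0\ge n_1$. Reintroducing the finitely many initial terms $g_1,\dots,g_{n_0-1}$ costs only a finite-dimensional subspace in the definition of $d_k$ and hence a bounded index shift, which I would package into a possibly larger constant $c$; this yields the claimed estimate for the full sequence.

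The main obstacle I anticipate is the first reduction: Proposition \ref{nuctb} is stated for an \emph{arbitrary} totally bounded $S$, not merely for null sequences, so I must justify reducing to the sequential case without losing uniformity in the exponent $n$. The subtlety is that covering $S$ by a null sequence together with a finite-dimensional piece, and then controlling the finite piece inside the formalism of \ref{not} (which allows an auxiliary vector space $E$, subgroup $H$, and homomorphism $\phi$), requires care that the finite-dimensional correction genuinely shifts the Kolmogoroff index by a constant and that the constants $c$ can still be chosen freely. I expect the completeness hypothesis and the fact that totally bounded sets in a complete (or suitably completed) nuclear group are compact to be exactly what makes this reduction go through, mirroring the compactness arguments already used in \ref{CharFrechet} and \ref{Faktor}.
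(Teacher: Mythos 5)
Your proposal has a genuine gap at its very first step: the reduction of an arbitrary totally bounded set $S$ to a null sequence. You flag this yourself as the ``main obstacle,'' but it is not a technicality that completeness will repair --- it is the wrong reduction. The proposition is stated (and later used, via \ref{alphaGdual}, inside the proof of \ref{MainTheorem}) for nuclear groups that need not be metrizable; in a non--metrizable group a totally bounded set is in general not captured by any sequence, and no analogue of the Banach--space fact that a compact set lies in the closed convex hull of a null sequence is available in the group setting (there is no convex hull, and you give no argument that the quasi--convex hull of a countable set would do). Moreover, the lemma you lean on, \ref{Faktor}, has hypotheses you cannot meet here: it requires $G$ complete and the neighborhood closed, neither of which is assumed in \ref{nuctb}. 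It is also unnecessary, since the definition of nuclearity (\ref{nuclear}) already lets you prescribe both the constant $c$ and the exponent $m$ freely, while the proposition only asks for \emph{some} $c>0$; nothing forces you to make $c$ small.

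The paper's proof avoids sequences entirely and is much shorter: choose $W$ with $(d_k(W,U))\le (k^{-n})$, realized by a \emph{surjective} $\phi:H\rightarrow G$ (Remark \ref{surjektiv}), with $X,Y\subseteq E$, $d_k(X,Y)\le k^{-n}$, $W\subseteq\phi(X\cap H)$ and $\phi(Y\cap H)\subseteq U$. Total boundedness gives a finite set $F\subseteq H$ with $S\subseteq \phi(F)+W\subseteq\phi(({\rm conv}F+X)\cap H)$ --- note that the finite net is lifted \emph{into} $H$ using surjectivity of $\phi$; this lifting is exactly the point your phrase ``enlarging the finite--dimensional subspace $L$'' glosses over, since the points of $S$ live in $G$ and not in the auxiliary space $E$ where the diameters $d_k$ are computed. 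Then the elementary shift $d_{k+f}({\rm conv}F+X,Y)\le d_k(X,Y)$, with $f=\dim\langle F\rangle$, yields $(d_k(S,U))\le(ck^{-n})$ with $c=(1+f)^n$. The one ingredient you did identify correctly --- that a finite set costs only an index shift, absorbed into the constant --- is indeed the heart of the paper's argument; but it must be executed inside the auxiliary vector space after lifting, and applied directly to a finite $W$--net of $S$, not to the initial segment of a null sequence that in general does not exist.
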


\begin{proof}
Since $G$ is a nuclear group, there exists   neighborhood $W$ of
$0$ such that $(d_k(W,U))\le (k^{-n})$. By definition, this means,
there exists a vector space $E$ and symmetric and convex subsets $X$
and $Y$ such that $d_k(X,Y)\le k^{-n}$ for all $k\in \N$ and further
a subgroup $H$ of $E$ and a surjective group homomorphism,
$\phi:H\rightarrow G$ such that $W\subseteq\phi(X\cap H)$ and
$\phi(Y\cap U)\subseteq U$.

Since $S$ is totally bounded and $\phi$ is surjective, there exists
a finite set $F\subseteq H$ such that $$S\subseteq \phi(F) +
W\subseteq \phi(F + (X\cap H)) \subseteq \phi((X+F)\cap H).$$

If $X\subseteq c Y + L$ then $X+{\rm conv}F\subseteq c Y+ L+ \langle
F\rangle$, which implies $$d_{k+f}(X+{\rm conv}F,Y)\le d_k(X,Y)  $$
where $f:={\rm dim} \langle F\rangle$. In particular, $$(k+f)^nd_
{k+f}(X+{\rm conv}F,Y)\le k^n(1+f)^n d_k(X,Y)\le (1+f)^n .$$ Since
$S\subseteq \phi((F+X)\cap H)\subseteq \phi(({\rm conv}F + X)\cap
H)$, the assertion follows.

\end{proof}

\begin{Lemma}\label{Abschaetzung}
For subsets  $A$ and $B$ of an abelian topological group $G$ for
which $(d_k(A,B))\le (c k^{-n})$ holds for some $n\ge 5$,  the
polars satisfy $(d_k(B^\triangleright,A^\triangleright))\le (cc_{n}
k^{-n+5})$.
\end{Lemma}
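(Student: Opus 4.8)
The plan is to dualize the construction used in the proof of Lemma~\ref{general} and to feed the outcome into the vector space polar estimate \ref{polar}. By \ref{not} there are a vector space $E$, symmetric convex sets $X,Y\subseteq E$ with $d_k(X,Y)\le ck^{-n}$, a subgroup $H\le E$ and a homomorphism $\phi:H\to G$ with $A\subseteq\phi(X\cap H)$ and $\phi(Y\cap H)\subseteq B$; by \ref{surjektiv} I may assume $\phi$ is onto. Since $n\ge 5\ge 2$, Lemma~\ref{2.14} yields pre--Hilbert seminorms $p,q$ on $\langle X\rangle$ with $X\subseteq B_p$, $B_q\subseteq Y$ and $d_k(B_p,B_q)\le cc_nk^{-n+2}$. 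The idea is now to convert each character in $B^\triangleright$ into a linear functional that is bounded on a suitable ball, to read off the polar estimate from \ref{polar}, and then to reinterpret the whole picture inside $G^\wedge$.

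First I would insert an intermediate seminorm. Writing $d_k(B_p,B_q)\le a_kb_k$ with $a_k=8\pi cc_nk^{-n+4}$ and $b_k=\tfrac1{8\pi}k^{-2}$ (both decreasing, which is exactly where $n\ge 5$ is used), Lemma~\ref{2.15} gives a pre--Hilbert seminorm $r$ with $d_k(B_p,B_r)\le 8\pi cc_nk^{-n+4}$ and $d_k(B_r,B_q)\le\tfrac1{8\pi}k^{-2}$, so that $\sum_k d_k(B_r,B_q)\le\tfrac1{8\pi}\cdot\tfrac{\pi^2}{6}$. For each $\chi\in B^\triangleright$ one has $\chi\circ\phi(B_q\cap H)\subseteq\chi(\phi(Y\cap H))\subseteq\chi(B)\subseteq\T_+$, so Lemma~\ref{8.1} furnishes a linear functional $f_\chi$ on $\langle X\rangle$ with $e^{2\pi i f_\chi}|_H=\chi\circ\phi$ and $\sup_{B_r}|f_\chi|\le\tfrac{21}{2\pi}\sum_k d_k(B_r,B_q)=\tfrac{7}{32}<\tfrac14$. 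Thus every $f_\chi$ lies in $\tfrac14 B_r^\triangleright$, where from now on the polar is taken in the algebraic dual.

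I would then take the dual model to be $E':=\langle X\rangle^*$, with $X':=B_r^\triangleright$, $Y':=\tfrac14 B_p^\triangleright$, the subgroup $H'$ of those $f$ for which $e^{2\pi i f}$ descends through $\phi$ to a continuous character of $G$, and $\psi:H'\to G^\wedge$ sending $f$ to that character. Then $\psi(f_\chi)=\chi$ gives $B^\triangleright\subseteq\psi(X'\cap H')$, while for $f\in Y'\cap H'$ the bound $\sup_{B_p}|f|\le\tfrac14$ together with $A\subseteq\phi(X\cap H)\subseteq\phi(B_p\cap H)$ forces $\psi(f)(A)\subseteq\T_+$, i.e. $\psi(Y'\cap H')\subseteq A^\triangleright$. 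The Kolmogoroff estimate is then immediate: $B_r$ is the unit ball of a seminorm on $\langle X\rangle$, hence absorbing, so \ref{properties}(iv) and \ref{polar} give
\[
d_k(X',Y')=4\,d_k\bigl(B_r^\triangleright,B_p^\triangleright\bigr)\le 4k\,d_k(B_p,B_r)\le 32\pi\,cc_nk^{-n+5},
\]
and after absorbing the numerical factor $32\pi$ (times the constant of \ref{2.14}) into a renamed $c_n$ this reads $(d_k(B^\triangleright,A^\triangleright))\le(cc_nk^{-n+5})$.

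The step I expect to be the main obstacle is the verification that $\psi$ is a well--defined homomorphism into $G^\wedge$ that actually covers all of $B^\triangleright$; this is the dual counterpart of the inclusions (9) and (10) in the proof of \ref{general}. The functional $f_\chi$ delivered by \ref{8.1} only pins down $\chi$ on $\phi(H\cap\langle X\rangle)$, so recovering the full character of $G$ requires care: after the normalization \ref{surjektiv} the homomorphism $e^{2\pi i f_\chi}$ on $\langle X\rangle$ glues with $\chi\circ\phi$ on $H$ and, using the divisibility of $\T$, extends to a character of $E$ which descends through the surjection $\phi$ precisely to $\chi$; one must then check that this assignment respects the group structure on $H'$ and is compatible with the two polar inclusions above. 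Once this identification is secured, the four cited lemmas combine to give the stated bound.
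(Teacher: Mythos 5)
The paper itself offers no argument here (it cites (16.4) of \cite{Buch}), so your attempt has to be judged on its own terms; its quantitative core is sound and is indeed the natural in--house route (the numerics all check: $n\ge 5$ makes $a_k=8\pi cc_nk^{-n+4}$ decreasing, $\tfrac{21}{2\pi}\cdot\tfrac{1}{8\pi}\cdot\tfrac{\pi^2}{6}=\tfrac{7}{32}<\tfrac14$, and $4k\cdot 8\pi cc_nk^{-n+4}=32\pi cc_nk^{-n+5}$, the numerical factor being harmlessly absorbable into the constant). The genuine gap is exactly the step you flag, and it cannot be closed by the gluing--plus--divisibility argument you sketch. Your $\psi$ is not well defined: an $f\in\langle X\rangle^*$ determines the putative character only on $\phi(H\cap\langle X\rangle)$, which is in general a proper, non--dense subgroup of $G$, so many distinct continuous characters ``descend from'' the same $f$; worse, distinct elements of $B^\triangleright$ can agree on $\phi(H\cap\langle X\rangle)$, and then \emph{no} homomorphism from a subgroup of $\langle X\rangle^*$ into $G^\wedge$ that factors through restriction to $H\cap\langle X\rangle$ can cover $B^\triangleright$ at all. (Extreme but legitimate case: $X=\{0\}$ is a valid witness of the hypothesis, and then $\langle X\rangle^*=\{0\}$ while $B^\triangleright$ is typically infinite.) The sketched repair does not escape this: gluing $e^{2\pi if_\chi}$ with $\chi\circ\phi$ and extending by divisibility of $\T$ produces an element of ${\rm Hom}(E,\T)$, i.e.\ a character of $E$, not a linear functional --- and characters of subgroups of vector spaces need not lift to linear functionals (most characters of $\Z+\sqrt2\,\Z\subseteq\R$ do not) --- whereas Notation \ref{not} requires the dual model to live inside a vector space. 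There is also a structural obstruction: you can normalize $H\subseteq\langle X\rangle$ (so that $f$ controls $\chi\circ\phi$ on all of $H$), or by \ref{surjektiv} make $\phi$ surjective (so that descent is unique), but each normalization destroys the other.

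The missing idea is the device the paper itself uses in Lemma \ref{Lemma}: record the character as an auxiliary lattice coordinate and hide the resulting infinite--dimensional factor inside $Y'$, where it costs nothing by \ref{properties}(iii). Concretely, take $E':=\langle X\rangle^*\times\R^{(G^\wedge)}$, let $H'$ consist of the pairs $(f,v)$ with $v\in\Z^{(G^\wedge)}$ and $e^{2\pi if}=\chi_v\circ\phi$ on $H\cap\langle X\rangle$, where $\chi_v:=\prod_\chi \chi^{v_\chi}$, and set $\psi(f,v):=\chi_v$; this is now tautologically a well--defined homomorphism into $G^\wedge$. Put $X':=B_r^\triangleright\times{\rm conv}\{\pm e_\chi\,|\,\chi\in G^\wedge\}$ and $Y':=\tfrac14 B_p^\triangleright\times\R^{(G^\wedge)}$. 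Your two inclusions then hold verbatim: for $\chi\in B^\triangleright$ the pair $(f_\chi,e_\chi)$ lies in $X'\cap H'$ and maps to $\chi$, and any $(f,v)\in Y'\cap H'$ maps to $\chi_v\in A^\triangleright$ because $A\subseteq\phi(B_p\cap H)$ and $\sup_{B_p}|f|\le\tfrac14$ (the $v$--coordinate is irrelevant for this inclusion). Finally $N:=\{0\}\times\R^{(G^\wedge)}$ is a subspace contained in $Y'$, so \ref{properties}(iii) gives $d_k(X',Y')=d_k(X'+N,Y')\le 4\,d_k(B_r^\triangleright,B_p^\triangleright)\le 4k\,d_k(B_p,B_r)\le 32\pi cc_nk^{-n+5}$, which is exactly your bound. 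With this replacement of your dual model the proof is complete; without it, the construction fails.
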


\begin{proof}

This is (16.4) in \cite{Buch}.  
\end{proof}

\begin{Corollary}\label{alphaGdual}
Let $G$ be a nuclear group such that $\alpha_G$ is continuous.

For every compact subset $K\subseteq G^\wedge$ and every
neighborhood $W$ of the neutral element of $G^\wedge$ and every
$n\in\N$, there exists $c>0$ such that
$$( d_k(K,U))\le (ck^{-n}).$$
\end{Corollary}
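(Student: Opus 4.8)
The plan is to translate the statement about the dual group $G^\wedge$ into a statement about polars in the group $G$, exploiting the duality relation $(d_k(A,B))\le (c_k)$ together with the two results that control polars, namely \ref{polar} and \ref{Abschaetzung}. First I would set up the geometry: given a compact subset $K\subseteq G^\wedge$ and a neighborhood $W$ of the neutral element of $G^\wedge$, I want to produce the estimate $(d_k(K,W))\le (ck^{-n})$. Since $\alpha_G$ is assumed continuous, every compact subset of $G^\wedge$ is equicontinuous (by \ref{alphaGcont}), so $S:=K^\triangleleft=\{x\in G:\ \chi(x)\in\T_+\ \forall\chi\in K\}$ is a neighborhood of the neutral element in $G$; dually, $K\subseteq S^\triangleright$. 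On the other side, the neighborhood $W$ of $G^\wedge$ contains a polar of some compact (hence totally bounded) subset $T\subseteq G$, so that $T^\triangleright\subseteq W$, because the compact--open topology on $G^\wedge$ has a neighborhood basis at the identity given by polars of compact sets.

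The heart of the argument is then to compare the polars $T^\triangleright$ and $S^\triangleright$ by comparing $S$ and $T$ inside $G$ and applying \ref{Abschaetzung}. Here I would invoke \ref{nuctb}: since $G$ is nuclear, for the totally bounded set $T$ and the neighborhood $S$ and any exponent $n+5$ we get a constant $c'>0$ with $(d_k(T,S))\le (c'k^{-n-5})$. Feeding this into \ref{Abschaetzung} (with $n$ replaced by $n+5$, which is $\ge 5$) yields $(d_k(S^\triangleright,T^\triangleright))\le (c'c_{n+5}k^{-n})$. Finally I would chain the inclusions: $K\subseteq S^\triangleright$ and $T^\triangleright\subseteq W$ mean that the ``$\le$'' relation in the sense of \ref{not} passes from $(d_k(S^\triangleright,T^\triangleright))$ to $(d_k(K,W))$, giving the desired $(d_k(K,W))\le (ck^{-n})$ with $c:=c'c_{n+5}$.

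The step I expect to require the most care is the bookkeeping of the direction of the polar operation and of which set plays the role of $A$ versus $B$ in each of the lemmas. The relation $(d_k(A,B))\le (c_k)$ of \ref{not} is not symmetric, and polarization reverses inclusions: $A\subseteq B$ gives $B^\triangleright\subseteq A^\triangleright$, so I must be careful that $T\subseteq$ (a scalar multiple of) $S$ rather than the reverse, and that the hypothesis $(d_k(T,S))\le(\cdot)$ in \ref{Abschaetzung} indeed produces a bound on $d_k(S^\triangleright,T^\triangleright)$ in the correct order. I would also need to make sure the passage through the abstract vector-space model of \ref{not} is legitimate, i.e.\ that the bound on $d_k(S^\triangleright,T^\triangleright)$ given by \ref{Abschaetzung} is itself stated in the sense of \ref{not} and composes with the inclusions $K\subseteq S^\triangleright$, $T^\triangleright\subseteq W$; this is the routine but delicate part, since one must track the auxiliary subgroup and homomorphism through the polar construction.

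A secondary subtlety is the identification of a neighborhood basis at the identity of $G^\wedge$ by polars of compact subsets of $G$; I would either cite this as a standard fact about the compact--open topology on character groups or note that it follows directly from the definition of that topology together with the fact that $\T_+$ is a neighborhood of $1$ in $\T$. With that in hand the remaining arithmetic is just the exponent shift $n\mapsto n+5$ absorbed into the constant $c_{n+5}$, so no genuinely new estimate is needed beyond \ref{nuctb} and \ref{Abschaetzung}.
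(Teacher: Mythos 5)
Your proposal is correct and takes essentially the same route as the paper's proof: equicontinuity of $K$ (via \ref{alphaGcont}) produces a neighborhood of $0$ in $G$ whose polar contains $K$, the compact--open topology produces a compact $T\subseteq G$ with $T^\triangleright\subseteq W$, and then \ref{nuctb} with exponent $n+5$ followed by \ref{Abschaetzung} and the compatibility of the relation of \ref{not} with the inclusions $K\subseteq S^\triangleright$ and $T^\triangleright\subseteq W$ give the estimate. The only cosmetic difference is that you choose the neighborhood in $G$ concretely as $K^\triangleleft$, whereas the paper takes an arbitrary neighborhood $W$ of $0$ in $G$ with $K\subseteq W^\triangleright$; these are interchangeable.
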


\begin{proof}
By the definition of the compact--open topology, there exists a
compact subset $S\subseteq G$ such that $S^\triangleright\subseteq
U$. Since  $\alpha_G$ is continuous, $K$ is equicontinuous, which
means that $K\subseteq W^\triangleright$ for a  suitable
neighborhood $W$ of the neutral element of $G$. According to
\ref{nuctb}, we have $( d_k(S,W))\le (ck^{n+5})$ for a suitable
constant $c>0$. \ref{Abschaetzung} implies $(d_k(W^\triangleright,
S^\triangleright))\le (cc_{n+5} k^{-n})$ and hence $( d_k(K,U))\le (
d_k(W^\triangleright, S^\triangleright))\le (c c_{n+5}k^{-n})$,
which completes the proof.
\end{proof}

Now we a prepared to prove the main theorem of this article:

\begin{Theorem}\label{MainTheorem}
Let $G$ be  a nuclear $k_\omega$--group. Then $G^\wedge$ is a
completely metrizable nuclear group.
\end{Theorem}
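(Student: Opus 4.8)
The plan is to establish that $G^\wedge$ is completely metrizable and nuclear by reducing everything to the Fréchet-space characterization of nuclearity via Kolmogoroff numbers. First I would record the topological half: since $G$ is a $k_\omega$--group, Proposition \ref{dual} immediately gives that $G^\wedge$ is complete and metrizable, hence a Fréchet group. This settles ``completely metrizable'' and, crucially, puts $G^\wedge$ in the metrizable realm where the characterization \ref{anstrengend} (or its positive counterpart) applies. So the real content is to prove that $G^\wedge$ is a \emph{nuclear} group.

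To prove nuclearity I would argue by contradiction, mirroring the vector-space proof in \ref{nukhkvr} but using the group machinery. Suppose $G^\wedge$ is not nuclear. It is metrizable and locally quasi--convex (being a nuclear group itself? no---rather, every character group is locally quasi--convex), so Theorem \ref{anstrengend} applies: there is a null sequence $(\chi_n)$ in $G^\wedge$ and a quasi--convex neighborhood $U$ of the neutral element such that for \emph{no} $n_0$ does the estimate $(d_k(\{\pm\chi_n:\ n\ge n_0\}\cup\{0\},U))\le(ck^{-9})$ hold, once $c$ is below the explicit threshold $\frac{\sqrt[4]{945}}{32\pi^4 c_5 c_9}$. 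The strategy is to contradict this by producing exactly such an estimate from the nuclearity of $G$ together with the continuity of $\alpha_G$.

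The key bridge is Corollary \ref{alphaGdual}. Since $G$ is a $k_\omega$--space it is a $k$--space, so by Proposition \ref{alphaGcont} the map $\alpha_G$ is continuous; combined with the nuclearity of $G$, Corollary \ref{alphaGdual} then tells me that for every compact $K\subseteq G^\wedge$, every neighborhood $U$, and every $n$, there is $c>0$ with $(d_k(K,U))\le(ck^{-n})$. A null sequence together with its limit $0$ is compact, so applying this with $K=\{\pm\chi_n:\ n\in\N\}\cup\{0\}$ and $n=10$ yields $(d_k(\{\pm\chi_n\}\cup\{0\},U))\le(ck^{-10})$ for some constant $c$. At this stage the constant $c$ is not controlled and the exponent is $-10$ rather than $-9$, so I cannot yet contradict \ref{anstrengend}. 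This is precisely where Lemma \ref{Faktor} enters: it lets me pass to a tail $\{\pm\chi_n:\ n\ge n_0\}$ and trade one power of $k$ for an arbitrarily small constant. Applying \ref{Faktor} (with $m=10$, the group $G^\wedge$ being complete and $U$ replaced by a closed neighborhood) produces, for any prescribed $c_1>0$, some $n_0$ with $(d_k(\{\pm\chi_n:\ n\ge n_0\},U))\le(c_1k^{-9})$. Choosing $c_1$ below the critical threshold contradicts the conclusion of \ref{anstrengend}, completing the proof.

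The main obstacle I anticipate is the bookkeeping around the constant and the exponent: I must start with a high enough power ($k^{-10}$, i.e.\ $n=10$ in \ref{alphaGdual}) so that after \ref{Faktor} shaves off one power I land exactly at $k^{-9}$ with a constant I can drive below $\frac{\sqrt[4]{945}}{32\pi^4 c_5 c_9}$. I also need to verify the hypotheses of \ref{Faktor} carefully---that $G^\wedge$ is complete (from \ref{dual}) and that $U$ may be taken closed and that the sequence is a genuine null sequence---and to confirm that $\{\pm\chi_n:\ n\in\N\}\cup\{0\}$ is compact so that \ref{alphaGdual} applies. These are all routine once the chain \ref{alphaGcont}$\to$\ref{alphaGdual}$\to$\ref{Faktor}$\to$\ref{anstrengend} is assembled, so the argument is mainly a matter of threading the quantitative estimates together in the right order.
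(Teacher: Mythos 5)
Your proposal is correct and follows essentially the same route as the paper's own proof: establish complete metrizability via Proposition \ref{dual}, assume non--nuclearity to invoke Theorem \ref{anstrengend}, then derive the contradictory estimate by combining Corollary \ref{alphaGdual} (with exponent $-10$) and Lemma \ref{Faktor} to trade one power of $k$ for a constant below the critical threshold. The bookkeeping points you flag (closedness of the quasi--convex neighborhood, completeness of $G^\wedge$, compactness of the null sequence with its limit) are exactly the hypotheses the paper checks as well.
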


\begin{proof}
Since $G$ is a $k_\omega$--group, $\alpha_G$ is continuous and
$G^\wedge$ is completely metrizable (\ref{dual}). Let us assume that
$G^\wedge$ is not nuclear. According to \ref{anstrengend}, there
exists a null sequence $(g_n)$ and a quasi--convex (and hence
closed) neighborhood $U$ such that $$(d_k(\{\pm g_n|\ n\ge
n_0\}\cup\{0\},U)\le (ck^{-9})$$ does not hold for any $n_0\in \N$
if $\dis c<\frac{\sqrt[4]{945}}{32\pi^4c_5c_9}=:c_0$. Since $\{\pm
g_n|\ n\ge n_0\}\cup\{0\}$ is compact and $U$ is a neighborhood of
$0$,
 there exists by \ref{alphaGdual} a constant $c_1>0$ such that
$$(d_k(\{\pm g_n|\ n\ge  0\}\cup\{0\},U)\le (c_1k^{-10}).$$
\ref{Faktor} implies the existence of $n_0\in \N$ such that
$$(d_k(\{\pm g_n|\ n\ge n_0\}\cup\{0\},U))\le (c_2k^{-9}) $$ where
$c_2=c_0/2$. This contradiction completes the proof.
\end{proof}

\begin{Theorem}\label{nucsr}
Every nuclear $k_\omega$--group  is strongly reflexive.
\end{Theorem}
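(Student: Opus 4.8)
The goal is to show that a nuclear $k_\omega$-group $G$ is \emph{strongly reflexive}, meaning that $G$, $G^\wedge$, and all of their closed subgroups and Hausdorff quotient groups are Pontryagin reflexive. The plan is to assemble this from the structural results already established, together with the known permanence properties of the class of nuclear groups. The central new input is \ref{MainTheorem}, which tells us that $G^\wedge$ is a completely metrizable nuclear group; combined with \ref{20.36} and \ref{dual}, this lets us pass back and forth between $G$ and $G^\wedge$ while staying inside the nuclear world.

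First I would recall that the class of nuclear groups is closed under passing to closed subgroups and Hausdorff quotients ((7.2) in \cite{Buch}). Hence every closed subgroup and every Hausdorff quotient of $G$ is again a nuclear group, and the same holds for $G^\wedge$. Next I would use the $k_\omega$/metrizable duality of \ref{dual}: since $G$ is a nuclear $k_\omega$-group, $G^\wedge$ is completely metrizable and nuclear (\ref{MainTheorem}), and dualizing once more via \ref{20.36} keeps us among metrizable nuclear groups whose duals are $k_\omega$. The structural point is that the two classes ``nuclear $k_\omega$-group'' and ``completely metrizable nuclear group'' are interchanged by the duality functor, and each is stable under the subgroup/quotient operations in the sense that closed subgroups and Hausdorff quotients of a metrizable group are metrizable, while the corresponding dual operations (quotients dual to subgroups, subgroups dual to quotients) land in the $k_\omega$ class.

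The key reflexivity input is Banaszczyk's theorem that every nuclear group which is either metrizable complete or a $k_\omega$-group (more precisely, every complete metrizable nuclear group and every nuclear $k_\omega$-group) is Pontryagin reflexive; this is available from the reflexivity results for nuclear groups in \cite{Buch} together with \ref{dual} and \ref{MainTheorem}. With reflexivity of the ``building blocks'' in hand, strong reflexivity follows by checking the four families of derived groups. For a closed subgroup $H\le G$: $H$ is again a nuclear group, and since closed subgroups of $k_\omega$-groups are $k_\omega$ (the compact cobasis restricts), $H$ is a nuclear $k_\omega$-group, hence reflexive. For a Hausdorff quotient $G/N$: it is a nuclear $k_\omega$-group as well (quotients of $k_\omega$-spaces by closed subgroups are $k_\omega$), hence reflexive. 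The corresponding statements for $G^\wedge$ use that $G^\wedge$ is completely metrizable nuclear, that closed subgroups and Hausdorff quotients of complete metrizable nuclear groups are again of this type, and that such groups are reflexive by \ref{20.36} combined with the metrizable reflexivity theorem.

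The main obstacle is the bookkeeping of the permanence properties: one must verify carefully that each of the eight relevant groups (closed subgroups and Hausdorff quotients of $G$ and of $G^\wedge$) lands in a class for which reflexivity is already known, using the duality $H^\perp = (G/H)^\wedge$ and $(G/H)^\perp = H^\wedge$ relating annihilators to duals of subgroups and quotients. The technical subtlety is that reflexivity is not automatically inherited by subgroups and quotients in general topological groups, so the argument genuinely requires that \emph{both} the group and its dual stay nuclear and stay in the metrizable-or-$k_\omega$ dichotomy throughout; this is exactly what \ref{MainTheorem} and \ref{20.36} secure. Once that dichotomy is seen to be preserved under the subgroup/quotient operations, strong reflexivity is a direct consequence, and I would cite the reflexivity criterion of \cite{Buch} (for instance the results around (17.3) there) to close each of the four cases.
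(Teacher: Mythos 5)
Your overall architecture is right in outline, and your handling of the dual side ($G^\wedge$ is a completely metrizable nuclear group by \ref{MainTheorem}, hence strongly reflexive by (17.3) in \cite{Buch}, hence all of its closed subgroups and Hausdorff quotients are reflexive) agrees with the paper. The genuine gap is in the other half: you never actually prove that $G$ itself (equivalently, an arbitrary nuclear $k_\omega$--group, which is what you need for the closed subgroups and Hausdorff quotients of $G$) is Pontryagin reflexive. You declare as ``the key reflexivity input'' a theorem of Banaszczyk stating that every nuclear $k_\omega$--group is Pontryagin reflexive, claiming it is available from the reflexivity results in \cite{Buch} together with \ref{dual} and \ref{MainTheorem}. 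No such theorem is in \cite{Buch}; it is precisely the new content of the statement you are proving, and it does not follow formally from the facts you cite. In particular, knowing that $G^\wedge$ is (strongly) reflexive does not yield reflexivity of $G$: reflexivity is not inherited backwards through the duality functor, and even granting that $\alpha_G$ is continuous (which you do get from the $k$--space property, \ref{alphaGcont}) and injective (nuclear groups are locally quasi--convex Hausdorff, hence maximally almost periodic), the surjectivity and the openness of $\alpha_G$ require a separate argument, which your proposal nowhere supplies.

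The paper closes exactly this gap with two ingredients absent from your proof: (a) by \cite{Raikov}, every $k_\omega$--group is complete; (b) by (21.5) in \cite{Diss}, for a complete nuclear group the canonical map $\alpha_G$ is an open algebraic isomorphism. Combined with the continuity of $\alpha_G$ coming from \ref{alphaGcont}, this makes $\alpha_G$ a topological isomorphism, so $G$ is reflexive; and since closed subgroups and Hausdorff quotients of nuclear $k_\omega$--groups are again nuclear $k_\omega$--groups, the same argument applies to them. The $G^\wedge$ side is then handled via \ref{MainTheorem} and (17.3) in \cite{Buch}, as you did. To repair your proof you must replace the asserted ``key reflexivity input'' by this completeness--plus--(21.5) argument (or an equivalent substitute establishing surjectivity and openness of $\alpha_G$); as written, the proposal is circular at its central step.
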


\begin{proof}
Let $G$ be a nuclear $k_\omega$--group.  According to \cite{Raikov},
every $k_\omega$--group is complete. (21.5) in \cite{Diss} implies
that $\alpha_G$ is an open isomorphism. $G$ being a $k$--space
implies that  $\alpha_G$ is continuous  (\ref{alphaGcont}) and hence
$\alpha_G$ must be a topological isomorphism which means that $G$ is
reflexive.

 Every closed subgroup and every Hausdorff quotient group
of a $k_\omega$--group, resp. nuclear group is a $k_\omega$--group,
resp. nuclear. In particular, all closed subgroups and Hausdorff
quotient groups of $G$ are reflexive.

According to \ref{MainTheorem}, $G^\wedge$ is  a completely
metrizable nuclear group and hence, due to (17.3) in \cite{Buch},
$G^\wedge$ is strongly reflexive. Therefore, all closed subgroups and
Hausdorff quotient groups of $G^\wedge$ are reflexive as well.
\end{proof}

\begin{Definition}
An abelian topological group $G$ is called {\bf almost metrizable  }
if there is a compact subgroup $H$ such that $G/H$ is metrizable.
\end{Definition}

\begin{Examples} Every metrizable and every compact abelian group
is almost metrizable. Further, it is a consequence of the structure
theorem for locally compact abelian groups that every locally
compact abelian group is almost metrizable.
\end{Examples}

\begin{Definition}
A Hausdorff space  $X$ is called {\bf locally $k_\omega$ space} (when
 endowed with the subspace topology) if
every point has an open neighborhood which is  a $k_\omega$--space.
\end{Definition}

Let us first collect some properties of abelian groups which are
(locally) $k_\omega$ spaces. Following \cite{Helge}, we  will call them  (locally) $k_\omega$
groups.

\begin{Proposition}\label{Helge}
An abelian Hausdorff group  is a locally $k_\omega$ group if and
only if it has an open $k_\omega$ subgroup.

The dual group of a locally $k_\omega$  group is \v Cech complete
and conversely, the dual group of a \v Cech complete group is
locally $k_\omega$.

\end{Proposition}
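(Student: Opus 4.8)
The plan is to prove the two assertions in turn, the second resting on the first together with Proposition~\ref{dual}. For the first assertion, the implication ``$\Leftarrow$'' is immediate: if $H\le G$ is an open $k_\omega$ subgroup, then for every $g\in G$ the coset $g+H$ is an open neighbourhood of $g$ and translation by $g$ is a homeomorphism $H\to g+H$, so every point has a $k_\omega$ neighbourhood. For ``$\Rightarrow$'', I would start from an open neighbourhood $V$ of $0$ that is a $k_\omega$-space, choose a symmetric open $W$ with $\overline{W}\subseteq V$, and let $H:=\bigcup_{m\in\N}(\overline{W})^{(m)}$ be the open subgroup it generates, where $(\overline{W})^{(m)}$ denotes the $m$-fold sumset. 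Writing $V=\bigcup_n K_n$ for a compact exhaustion, the sets $D_n:=\overline{W}\cap K_n$ are compact and increasing with union $\overline{W}$, so each $C_r:=(D_r)^{(r)}$ is compact (a continuous image of $D_r^{\,r}$ under addition), the $C_r$ increase, and $\bigcup_r C_r=H$. One then checks that $(C_r)$ exhibits $H$ as a $k_\omega$-space: a compact $C\subseteq H$ lies in finitely many of the open sets $W^{(m)}$, hence in a single $(\overline{W})^{(m_0)}$, and a compactness argument places $C$ inside some $C_r$; the final-topology property is inherited because $H$ is open in $G$ and the class of $k_\omega$-spaces is stable under the relevant countable colimits, finite products and continuous Hausdorff images.

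For the forward direction of the second assertion I would use the first part to fix an open $k_\omega$ subgroup $H\le G$. Then $G/H$ is discrete, so its dual $(G/H)^\wedge$ is compact, and it is identified with the annihilator $H^{\perp}=\{\chi\in G^\wedge:\chi(H)=\{1\}\}$, a compact subgroup of $G^\wedge$. Restriction yields a short exact sequence $0\to H^{\perp}\to G^\wedge\to H^\wedge\to 0$, where surjectivity holds because $H$ is open, so any $\psi\in H^\wedge$ extends to a continuous character of $G$ (divisibility of $\T$ gives an algebraic extension, and openness of $H$ makes it continuous). The crucial point is that the induced map $G^\wedge/H^{\perp}\to H^\wedge$ is a topological isomorphism; granting this, $H^\wedge$ is completely metrizable by \ref{dual} (as $H$ is $k_\omega$), so $G^\wedge$ has a compact subgroup with completely metrizable quotient and is therefore \v Cech complete, by the standard characterization that a topological abelian group is \v Cech complete precisely when it carries a compact subgroup with completely metrizable quotient (equivalently, is complete and almost metrizable).

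For the converse, let $\Gamma$ be \v Cech complete; by that same characterization there is a compact subgroup $K\le\Gamma$ with $\Gamma/K$ completely metrizable. Then $K^{\perp}=\{\chi\in\Gamma^\wedge:\chi(K)=\{1\}\}$ is an open subgroup of $\Gamma^\wedge$ (the annihilator of a compact subgroup is open) and is canonically isomorphic to $(\Gamma/K)^\wedge$. Since $\Gamma/K$ is metrizable, \ref{dual} shows that $(\Gamma/K)^\wedge$ is a $k_\omega$-space, so $\Gamma^\wedge$ possesses an open $k_\omega$ subgroup and is locally $k_\omega$ by the first assertion.

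Two points are delicate. First, in the ``$\Rightarrow$'' half of the first assertion one must genuinely verify that the generated open subgroup is $k_\omega$ (both that each compact subset is absorbed by some $C_r$ and the final-topology condition), which rests on the stability of $k_\omega$-spaces under countable colimits, finite products and continuous Hausdorff images. Second, and this I expect to be the main obstacle, one must establish the identification $G^\wedge/H^{\perp}\cong H^\wedge$ as topological groups, that is, that restriction to an open subgroup is a quotient map. Here I would exploit that a compact subset of $G$ meets only finitely many cosets of $H$, together with compactness of the fibre $H^{\perp}$: the quotient $G^\wedge\to G^\wedge/H^{\perp}$ is then a closed (perfect) map, so it suffices to show that the continuous bijection $G^\wedge/H^{\perp}\to H^\wedge$ is closed, which reduces to properness of the restriction map; alternatively, once complete metrizability of the quotient is available, one invokes the open mapping theorem. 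Matching the compact-open topologies on $G^\wedge$ and $H^\wedge$ across the infinitely many cosets of $H$ is exactly where the argument requires care.
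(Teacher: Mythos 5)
The paper itself gives no argument for this proposition (its ``proof'' is the citation of (5.3) and (6.1) in \cite{Helge}), and your outline does follow the same strategy as that reference; but as written it contains two genuine gaps, located exactly where the real work lies. The first is in the claim that $H=\langle\overline{W}\rangle$ is a $k_\omega$-space: you justify hemicompactness and the final-topology condition by appealing to stability of $k_\omega$-spaces under ``continuous Hausdorff images.'' That principle is false: any bijection from a countably infinite discrete space onto $\Q$ is continuous, the discrete space is $k_\omega$, but $\Q$ is not (its compact subsets are nowhere dense, so it is not hemicompact). $k_\omega$-ness passes only to Hausdorff \emph{quotient} images, i.e.\ images carrying the final topology---and whether the subspace topology of $H$ equals the final topology determined by $(C_r)$ is precisely what has to be proved, so the appeal is circular. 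The gap is reparable, but only by using the group structure and the openness of $H$. For hemicompactness: cover a compact $C\subseteq H$ by finitely many translates $f+W$ with $f\in F\subseteq C$ finite, note $F\subseteq C_{r_1}$ for some $r_1$, and observe that each $(C-f)\cap\overline{W}$ is a compact subset of $V$, hence contained in some $K_n$ and thus in $D_n$; this gives $C\subseteq C_{r_1}+D_{n^\ast}\subseteq C_r$ for suitable $r$. For the $k$-property: given $A\subseteq H$ with every $A\cap C_r$ compact and $x\in\overline{A}\cap H$, the set $B:=(A-x)\cap\overline{W}$ satisfies $B\cap K_n=\bigl(A\cap(x+D_n)\bigr)-x$, which is compact by the hemicompactness just established; hence $B$ is closed in the $k_\omega$-space $V$, and $0\in\overline{B}$ then forces $0\in B$, i.e.\ $x\in A$. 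Without some such argument (or the final-group-topology machinery of \cite{Helge}), the first assertion is not proved.

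The second gap is the one you flag yourself: the topological identification $G^\wedge/H^{\perp}\cong H^\wedge$. Of your two proposed routes, the open-mapping-theorem one cannot be run: at that stage nothing (completeness, metrizability, Baire property) is known about $G^\wedge/H^{\perp}$---that is exactly what is being proved. The properness route does work, but for a reason you do not supply: since $H$ is $k_\omega$ it is a $k$-space, so by \ref{alphaGcont} every compact $C\subseteq H^\wedge$ is equicontinuous, i.e.\ $C\subseteq W^{\triangleright}$ for some neighbourhood $W$ of $0$ in $H$; as $H$ is open in $G$, this $W$ is also a neighbourhood of $0$ in $G$, and the polar of a neighbourhood is always compact in the compact--open topology, so the restriction map $\rho\colon G^\wedge\to H^\wedge$ pulls compact sets back to compact sets, is therefore a closed map into the metrizable space $H^\wedge$, and the induced continuous bijection $G^\wedge/H^{\perp}\to H^\wedge$ is a homeomorphism. (Alternatively, this identification is precisely what is proved in \cite{3Pers}, already in the bibliography, and you could simply cite it.) Finally, the characterization of \v{C}ech-complete groups on which both halves of your second assertion rest---\v{C}ech complete if and only if there is a compact subgroup with completely metrizable quotient---is correct but is itself a nontrivial theorem not available inside this paper; it must be referenced, and without it the second assertion remains unproved.
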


\begin{proof}
These are (5.3) and (6.1) in \cite{Helge}.
\end{proof}

\begin{Theorem}\label{sr}
Every nuclear locally $k_\omega$ group is strongly reflexive.
\end{Theorem}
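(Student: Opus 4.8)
The plan is to reduce everything to the $k_\omega$ case already settled in \ref{nucsr}. By \ref{Helge}, a locally $k_\omega$ group $G$ possesses an open subgroup $K$ which is a $k_\omega$--space. Being a subgroup of the nuclear group $G$, $K$ is again nuclear ((7.2) in \cite{Buch}), so $K$ is a nuclear $k_\omega$--group and hence \emph{strongly reflexive} by \ref{nucsr}; in particular all closed subgroups and all Hausdorff quotients of $K$ \emph{and} of $K^\wedge$ are reflexive, and by \ref{MainTheorem} the group $K^\wedge$ is completely metrizable and nuclear. The first thing I would record is the exact sequence dual to $0\to K\to G\to G/K\to 0$: since $K$ is open, the restriction $\rho:G^\wedge\to K^\wedge$ is a surjective quotient map whose kernel is the annihilator $K^\perp:=\{\chi\in G^\wedge:\ \chi(K)=\{1\}\}\cong (G/K)^\wedge$. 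As $G/K$ is discrete, $K^\perp$ is a \emph{compact} subgroup of $G^\wedge$ with $G^\wedge/K^\perp\cong K^\wedge$; in particular $\rho$ is a perfect (hence closed) map.

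To prove that $G$ is strongly reflexive I must show that the closed subgroups and the Hausdorff quotients of $G$ and of $G^\wedge$ are reflexive. I would lean on two transfer principles from classical Pontryagin duality: (T1) a topological abelian group with an open reflexive subgroup is reflexive; and (T2) a topological abelian group containing a compact subgroup with reflexive quotient is reflexive. For the $G$--side (T1) suffices. If $N\le G$ is closed, then $N\cap K$ is an open subgroup of $N$ and a closed subgroup of $K$, hence reflexive, so $N$ is reflexive by (T1). If $N$ is closed and $G/N$ is the associated Hausdorff quotient, then $(K+N)/N\cong K/(K\cap N)$ is an open subgroup of $G/N$ and a Hausdorff quotient of $K$, hence reflexive, so $G/N$ is reflexive by (T1).

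For the $G^\wedge$--side I would exploit the compact subgroup $K^\perp$ together with (T2). If $L\le G^\wedge$ is closed, then $L\cap K^\perp$ is compact and, since $\rho$ is closed, $\rho(L)$ is a closed subgroup of $K^\wedge$, hence reflexive; as $L/(L\cap K^\perp)\cong\rho(L)$, the group $L$ is an extension of a reflexive group by a compact subgroup, so $L$ is reflexive by (T2). If $L$ is closed and $G^\wedge/L$ is the Hausdorff quotient, then the image of $K^\perp$ in $G^\wedge/L$ is a compact subgroup, and the quotient of $G^\wedge/L$ by it is $G^\wedge/(K^\perp+L)\cong K^\wedge/\rho(L)$, a Hausdorff quotient of $K^\wedge$ and therefore reflexive; (T2) then gives that $G^\wedge/L$ is reflexive. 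With all four families reflexive, $G$ is strongly reflexive.

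The hard part is entirely concentrated in the two transfer principles (T1) and (T2); the remainder is the formal manipulation of the two exact sequences together with the strong reflexivity of $K$ (which already carries the $K^\wedge$--side). Each principle is proved by dualizing the relevant short exact sequence — $0\to B\to A\to A/B\to 0$ with $B$ open (so $A/B$ is discrete and $(A/B)^\wedge$ compact) for (T1), and $0\to C\to A\to A/C\to 0$ with $C$ compact (so $C^\wedge$ discrete and $(A/C)^\wedge\hookrightarrow A^\wedge$) for (T2) — and then verifying that $\alpha_A$ is a topological isomorphism by comparing it with $\alpha_B$ and $\alpha_{A/B}$ (respectively $\alpha_C$ and $\alpha_{A/C}$) in a five--lemma type diagram chase, using that the dual sequences remain exact and that the outer vertical arrows are topological isomorphisms. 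Since these statements about open and compact subgroups are standard in the duality theory of abelian groups, I would cite them rather than reprove them, and focus the write--up on the diagram chases in the two preceding paragraphs.
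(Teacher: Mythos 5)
Your first half is exactly the paper's argument: by \ref{Helge} there is an open $k_\omega$ subgroup $K$, it is nuclear as a subgroup of $G$, and \ref{nucsr} makes it strongly reflexive. At that point the paper finishes in one line by citing \cite{3Pers}, where it is proved that a group possessing an open, strongly reflexive subgroup is itself strongly reflexive --- i.e.\ precisely the transfer you then set out to re-derive. Your derivation of the $G$--side is fine, because it only uses (T1), and (T1) \emph{is} the reflexivity theorem of \cite{3Pers}, a legitimate citation.

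The gap is (T2). The statement ``a topological abelian group containing a compact subgroup with reflexive quotient is reflexive'' is not a standard citable fact; it is false in that generality, so your plan to ``cite it rather than reprove it'' cannot be carried out. The root of the problem is your claim that ``the dual sequences remain exact'': dualizing $0\to C\to A\to A/C\to 0$ with $C$ compact requires every character of $C$ to extend to $A$, and compact subgroups of abelian topological groups need not be dually embedded. A concrete counterexample to (T2): let $Z$ be Ribe's twisted sum, a non--locally convex topological vector space containing a line $\R$ with $Z/\R\cong \ell_1$ and the extension non--split; then every $f\in Z'$ annihilates $\R$ (otherwise the extension would split), so by Theorem \ref{Smith} every character of $A:=Z/\Z$ annihilates the compact subgroup $C:=\R/\Z\cong\T$. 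Thus $A/C\cong\ell_1$ is reflexive while $A$ is not even maximally almost periodic, hence not reflexive, and the restriction map $A^\wedge\to C^\wedge$ is trivial rather than surjective --- your five--lemma chase cannot start. In your actual application the compact subgroups $L\cap K^\perp$ lie inside the dual group $G^\wedge$, where the evaluation characters $\alpha_G(g)$ separate points; for a \emph{compact} subgroup, dual separation is equivalent to all of its characters extending, so there the needed exactness does hold. But verifying this (rather than asserting exactness in general) is exactly the nontrivial content of the strong--reflexivity theorem of \cite{3Pers}; either supply that verification, or simply cite that theorem, as the paper does.
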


\begin{proof}
Let $G$ be a locally $k_\omega$ group. According to \ref{Helge},
there is an open subgroup $H$ of $G$ which is a $k_\omega$ group and
(trivially) nuclear. So \ref{nucsr} implies that $H$ is strongly
reflexive. In \cite{3Pers} it has been shown that a group is
strongly reflexive if it has an open, strongly reflexive subgroup.
\end{proof}

\begin{Remark}
 A result similar to \ref{sr} has been
stated in (4.6) in \cite{SMJ}, unfortunately the proof contains an
error: the homomorphism $(f^m_n)´:Q_n\rightarrow Q_m$ need not be an
embedding.

\ref{sr} generalizes (7.9) in \cite{Helge}.
\end{Remark}

\begin{Questions}
Is every strongly reflexive group a nuclear group?

Is every strongly reflexive group a $k$--space?

\end{Questions}

\end{document}